\DeclareSymbolFontAlphabet{\mathbb}{AMSb} 
\DeclareSymbolFontAlphabet{\mathbbl}{bbold}
\newcommand{\Prism}{{\mathlarger{\mathbbl{\Delta}}}}
\newtheorem{theorem}{Theorem}
\newtheorem{corollary}{Corollary}[theorem]
\newtheorem{lemma}[theorem]{Lemma}
\newtheorem{definition}[theorem]{Definition}
\newtheorem{proposition}[theorem]{Proposition}
\newtheorem{remark}[theorem]{Remark}
\theoremstyle{definition}
\newtheorem{convention}[theorem]{Convention}
\newcommand{\Tatem}{\widehat{T_{\mathfrak{m}}}}
\newcommand{\Ainf}{A_{\mathrm{inf}}}
\newcommand{\AinfR}{A_{\mathrm{inf}}(R)}
\newcommand{\AcrisR}{A_{\mathrm{cris}}(R)}
\newcommand{\Acris}{A_{\mathrm{cris}}}
\newcommand{\Bcris}{B_{\mathrm{cris}}}
\newcommand{\OAcris}{\mathrm{OA}_{\mathrm{cris}}}
\newcommand{\OBcris}{\mathrm{OB}_{\mathrm{cris}}}
\newcommand{\Tcris}{T_{\mathrm{cris}}}
\newcommand{\Hom}{\mathrm{Hom}}
\newcommand{\GR}{\mathcal{R}}
\newcommand{\GS}{\mathfrak{S}}
\newcommand{\GM}{\mathfrak{M}}
\newcommand{\SM}{\mathscr{M}}
\newcommand{\SD}{\mathscr{D}}
\newcommand{\Fil}{\mathrm{Fil}}
\newcommand{\Tor}{\mathrm{Tor}}
\newcommand{\etalering}{\mathcal{O}_{\mathcal{E}}}
\newcommand{\cris}{\mathrm{cris}}
\newcommand{\GST}{\GS^{(1)}}
\newcommand{\Dcris}{D_{\mathrm{cris}}}
\newcommand{\Dcrisdual}{D_{\mathrm{cris}}^{\vee}}
\newcommand{\F}{\mathrm{F}}
\newcommand{\T}{\mathcal{T}}
\newcommand{\hatM}{\widehat{M}}
\begin{document}
\title{Fontaine-Laffaille Theory over Power Series Rings}
\author{Christian Hokaj}
\begin{abstract}
Let $k$ be a perfect field of characteristic $p > 2$. We extend the equivalence of categories between Fontaine-Laffaille modules and $\mathbb{Z}_p$ lattices inside crystalline representations with Hodge-Tate weights at most $p-2$ of \cite{Fontaine} to the situation where the base ring is the power series ring over the Witt vectors $R \coloneqq W(k)\llbracket t_1, \cdots , t_d\rrbracket$ and $T$ a $p$-adically complete ring that is \'etale over the Tate Algebra $W(k)\langle t_1^{\pm 1}, \cdots , t_d^{\pm 1}\rangle$. 
\end{abstract}
\maketitle
\tableofcontents

\begin{section}{Introduction}
Let $k$ be a perfect field of characteristic $p > 2$, $W(k)$ the ring of Witt-vectors over $k$ with $\varphi$ its Frobenius automorphism, $K$ its fraction field, and $R \coloneqq W(k)\llbracket t_1, \cdots , t_d \rrbracket$ the ring of power series over $W(k)$ in $d$ variables. Let $T$ be a ``small base ring," a ring that is $p$-adically completed \'etale over the Tate algebra, $W(k)\langle t_1^{\pm 1}, \cdots , t_d^{\pm 1}\rangle$ (i.e. the ring of restricted Laurent series over $W(k)$), for some $d$. Let $r$ be an integer such that $0 \leq r \leq p-2.$ When the base ring is $W(k)$, Fontaine and Laffaille introduced in \cite{Fontaine} strongly divisible $W(k)$-lattices, which we now call Fontaine-Laffaille modules, to study $\mathbb{Z}_p$-lattices in crystalline representations of $G_K\coloneqq \mathrm{Gal}(\overline{K}/K)$ with Hodge-Tate weights in $[0, r].$ Denote this category by $\mathrm{Rep}_{\mathbb{Z}_p, [0,r]}^{\mathrm{cris}}(G_K)$. We first recall the definition of a Fontaine-Laffaille module in the classical setting:

\begin{definition}
\label{FLdefClassic}
 A \textit{finite free Fontaine-Laffaille module} over $W(k)$ is a triple $(M, \Fil^i(M), \varphi_i)$ where:
\begin{itemize}
\item $M$ is a finite free module over $W(k)$
\item $\{\Fil^i(M)\}_{i=0}^\infty$ is a decreasing filtration of $M$ such that $\Fil^{i+1} (M)$ is a direct summand of $\Fil^i(M)$ for all $i$, $\Fil^0(M) = M$, $\Fil^{r+1}(M) = 0.$ 
\item $\varphi_i$ are $\varphi$-semilinear maps $\Fil^i(M) \to M$ whose restriction to $\Fil^{i+1}(M)$ is $p\varphi_{i+1}.$  
\item $\sum_{i}\varphi_i (\Fil^iM)  = M.$
\end{itemize}
\end{definition}

Let $\mathrm{MF}^{\mathrm{ff}, [0,r]}(W(k))$ denote the category of finite free Fontaine-Laffaille modules over $W(k)$. These modules classify lattices within crystalline representations as in the following theorem:

\begin{theorem}[\cite{Fontaine}]
When $0 \leq r \leq p-2,$ the functor given by $$T_{\mathrm{cris}}(M) \coloneqq (\Fil^r(A_{\mathrm{cris}} \otimes_{W(k)} M))^{\varphi_r=1}$$ is an equivalence of categories $$\mathrm{MF}^{\mathrm{ff}, [0,r]}(W(k)) \leftrightarrow \mathrm{Rep}_{\mathbb{Z}_p, [0,r]}^{\mathrm{cris}}(G_K).$$ 
\end{theorem}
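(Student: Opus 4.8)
The plan is to establish the equivalence first for \emph{torsion} Fontaine-Laffaille modules — triples $(M, \Fil^i M, \varphi_i)$ as in Definition \ref{FLdefClassic} but with $M$ of finite length over $W(k)$ — and then to descend to the finite free case by a $p$-adic limit. For finite free $M$, each quotient $M_n \coloneqq M/p^n M$ is a torsion Fontaine-Laffaille module (the filtration stays split and $\sum_i \varphi_i(\Fil^i M_n) = M_n$, both inherited from $M$), and $M = \varprojlim_n M_n$ compatibly with all structures; since $\Acris$ is $p$-adically complete, $\Fil^r(\Acris \otimes_{W(k)} M) = \varprojlim_n \Fil^r(\Acris/p^n \otimes M_n)$, and — using that the finite groups $\Tcris(M_n)$ have vanishing $\varprojlim^1$ — one gets $\Tcris(M) = \varprojlim_n \Tcris(M_n)$. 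So it suffices to treat torsion modules and then verify that such inverse limits land among lattices in crystalline representations.

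For torsion modules I would argue by dévissage. The category of torsion Fontaine-Laffaille modules with weights in $[0,r]$ is abelian, and after base change to $W(\overline k)$ every object is a successive extension of the rank-one objects $W(k)/p$ with $\Fil^i = W(k)/p$ for $i \le j$ and $\Fil^i = 0$ for $i > j$ (some $j \in [0,r]$) and $\varphi_j$ an isomorphism. The key input is a fundamental exact sequence of the shape
$$0 \to (\mathbb{Z}/p^n)(i) \to \Fil^i(\Acris/p^n) \xrightarrow{\varphi_i - 1} \Acris/p^n \to 0, \qquad 0 \le i \le p-2,$$
whose exactness is precisely where $r \le p-2$ enters: it keeps the computation clear of the divided-power denominators in $\Acris$ that would otherwise obstruct surjectivity. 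Tensoring a torsion $M$ with this sequence and passing to the long exact cohomology sequence shows that $\Tcris$ is exact on torsion objects, that it sends each rank-one block to a Tate twist of $\mathbb{F}_p$, and hence that $\mathrm{length}_{\mathbb{Z}_p}\Tcris(M) = \mathrm{length}_{W(k)} M$. Exactness together with the computation on the blocks then upgrades — by induction on length, matching $\Hom$ and $\mathrm{Ext}^1$ groups — to full faithfulness on torsion modules, one direction of the quasi-inverse being furnished by an $(\Acris \otimes -)$-type construction carrying its filtration and Frobenius. Taking limits gives full faithfulness for finite free modules as well.

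What remains is the essential image, which I expect to be the main obstacle. For finite free $M$, invert $p$: $D \coloneqq M[1/p]$, with the induced filtration and with $\varphi \coloneqq p^i\varphi_i$ on $\Fil^i$, is a filtered $\varphi$-module with Hodge--Tate weights in $[0,r]$; one checks it is weakly admissible and that $\Tcris(M)[1/p]$ is (a fixed normalization of) the associated crystalline representation, so $\Tcris(M)$ is a Galois-stable $\mathbb{Z}_p$-lattice in an object of $\mathrm{Rep}_{\mathbb{Z}_p, [0,r]}^{\mathrm{cris}}(G_K)$. For essential surjectivity one must reverse this: given a lattice $L$ in a crystalline $V$ with Hodge--Tate weights in $[0,r]$, any Fontaine-Laffaille module $M$ with $\Tcris(M) \cong V$ after inverting $p$ necessarily has $M[1/p] \cong \Dcris(V)$, so the task is to produce a $W(k)$-lattice $\mathfrak{M} \subset \Dcris(V)$ that is stable under all the $\varphi_i$ on the induced filtration and \emph{strongly divisible}, i.e.\ with $\sum_i \varphi_i(\Fil^i\mathfrak{M}) = \mathfrak{M}$, and satisfying $\Tcris(\mathfrak{M}) = L$. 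Constructing this lattice $\mathfrak{M}$ — roughly, the Frobenius- and filtration-saturation inside $\Dcris(V)$ of an integral structure produced from $L$ and $\Acris$ — and checking that its reduction modulo each $p^n$ is the (already uniquely determined, by the torsion equivalence) Fontaine-Laffaille module attached to $L/p^n L$, is the delicate step; once it is in hand, $\varprojlim_n$ recovers $\mathfrak{M}$ and closes the argument. The bound $r \le p-2$ is used here too, since it is what makes the torsion comparison — hence the modular identification of $\mathfrak{M}/p^n$ — available.
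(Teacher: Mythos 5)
The paper gives no proof of this statement: it is quoted verbatim from Fontaine--Laffaille (\cite{Fontaine}) as background, so there is no internal argument to compare yours against. Judged on its own, your sketch follows the standard route to the classical theorem --- d\'evissage through the abelian category of torsion objects, the fundamental exact sequence $0 \to \mathbb{Z}_p(i) \to \Fil^i\Acris \xrightarrow{\varphi_i - 1} \Acris \to 0$ for $0 \le i \le p-2$, length/exactness bookkeeping to get full faithfulness, and a $p$-adic limit to pass to finite free modules --- and the overall architecture is right. Two points deserve attention. First, your reduction mod $p^n$ of the fundamental sequence implicitly uses $\Fil^i(\Acris/p^n) = (\Fil^i\Acris)/p^n$, which holds because $\Acris/\Fil^i\Acris$ is $p$-torsion free for $i \le p-1$; this should be said, as it is another place the bound on $r$ enters.

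Second, and more seriously, the essential surjectivity step is not actually argued: you correctly identify that one must produce a strongly divisible $W(k)$-lattice $\mathfrak{M} \subset \Dcris(V)$ adapted to the given Galois-stable lattice $L$, but you only describe it as ``roughly, the Frobenius- and filtration-saturation inside $\Dcris(V)$'' and defer the verification. This is the genuine content of the theorem in this direction --- it is Laffaille's theorem on the existence of strongly divisible lattices in weakly admissible filtered $\varphi$-modules of Hodge--Tate amplitude at most $p-2$, proved by an iterated adaptation process on lattices that must be shown to stabilize, together with the compatibility $\Tcris(\mathfrak{M}) = L$ rather than merely $\Tcris(\mathfrak{M})[1/p] \cong V$. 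Asserting that the delicate step exists is not the same as carrying it out, so as written the sketch establishes full faithfulness but leaves essential surjectivity as a citation-shaped hole; that is acceptable for a statement the paper itself only cites, but you should be explicit that this step is being imported from Laffaille rather than proved.
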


The work of Faltings in \cite{Faltings} extended this theory to more general base rings. Our goal is to extend the above theorem to the setting where the base ring is $R$ and to the setting of a small base ring $T$. \newline

Let us first recall the representation category in the relative case. Let $G_R$ denote $\mathrm{Gal}(\overline{R}[p^{-1}]/R[p^{-1}]) = \pi_1^{\mathrm{\acute{e}t}}(\mathrm{Spec}R\left[\frac{1}{p}\right], \eta)$ where $\eta$ is a fixed geometric point and  $\overline{R}$ is the union of finite $R$-subalgebras $R'$ of a fixed algebraic closure of $\mathrm{Frac}(R)$ such that $R'[p^{-1}]$ is \'etale over $R[p^{-1}].$ Let $\mathrm{Rep}_{\mathbb{Z}_p, [0,r]}^{\mathrm{cris}}(G_R)$ denote the category of crystalline $\mathbb{Z}_p$ representations of $G_R$ with Hodge-Tate weights in $[0,r]$ and we make the analogous definitions over the base ring $T$. \newline 

Using recent results of Du, Liu, Moon, and Shimizu in \cite{Fcrystals} and the category of finite, free Fontaine-Laffaille modules in the relative case first introduced by Faltings in \cite{Faltings} and more recently studied by \cite{LMP}, which we will denote $\mathrm{MF}^{\mathrm{ff}, [0,r]}_{\nabla}(R)$, we obtain the following result (see Definition \ref{RelFLDef} for the definition of Fontaine-Laffaille modules in the relative case, and see Section \ref{relativeperiodrings} for a brief review of the period rings and crystalline representations in the relative case): 

\begin{theorem}
Let $r$ be an integer satisfying $0 \leq r \leq p-2$ and let $\T$ denote either the base ring $R$ or $T$. The functor $$T_{\mathrm{cris}}: \mathrm{MF}^{\mathrm{ff}, [0,r]}_{\nabla}(\T)   \to \mathrm{Rep}_{\mathbb{Z}_p, [0,r]}^{\mathrm{cris}}(G_\T)$$ given by $$T_{\cris}(M) \coloneqq (\Fil^r(A_{\cris}(\T) \otimes_{\T} M))^{\varphi_r=1}$$ is an equivalence of categories.
\end{theorem}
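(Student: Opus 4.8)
The plan is to factor $T_{\cris}$ through a category of Kisin-style $\varphi$-modules and to reduce the theorem to the equivalence between such modules and crystalline $\mathbb{Z}_p$-lattices established in \cite{Fcrystals}. Write $\GS$ for the Breuil--Kisin ring attached to $\T$, with its Frobenius $\varphi$ and its Eisenstein-type element $E$, and let $\mathrm{Mod}^{\varphi,\nabla,\leq r}_{\GS}$ denote the category of finite free Kisin modules over $\GS$ of $E$-height $\leq r$ equipped with a compatible topologically nilpotent connection; by \cite{Fcrystals}, after identifying such modules with the corresponding completed prismatic $F$-crystals, there is an equivalence $T_{\GS} \colon \mathrm{Mod}^{\varphi,\nabla,\leq r}_{\GS} \to \mathrm{Rep}_{\mathbb{Z}_p,[0,r]}^{\cris}(G_\T)$. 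I will construct a functor $\DDGS \colon \mathrm{MF}^{\mathrm{ff},[0,r]}_{\nabla}(\T) \to \mathrm{Mod}^{\varphi,\nabla,\leq r}_{\GS}$ together with a natural isomorphism $T_{\cris} \cong T_{\GS} \circ \DDGS$, so that the theorem reduces to showing that $\DDGS$ is an equivalence.

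The functor $\DDGS$ is the relative analogue of the classical passage from Fontaine--Laffaille modules to Kisin modules. Given $(M, \Fil^\bullet M, \varphi_i, \nabla)$, its image $\DDGS(M)$ has underlying $\GS$-module $\GS \otimes_{\T} M$, the filtration $\Fil^r\bigl(\GS \otimes_\T M\bigr) = \sum_i E^{\,r-i}\bigl(\GS \otimes_\T \Fil^i M\bigr)$, a $\varphi$-semilinear Frobenius assembled from the divided Frobenii $\varphi_i$ and the powers of $E$, and the connection induced by $\nabla$. The first task is to check that this is well posed: that the strong divisibility condition $\sum_i \varphi_i(\Fil^i M) = M$ is exactly what makes $\DDGS(M)$ of $E$-height $\leq r$, that the direct-summand hypothesis on $\Fil^\bullet M$ yields a finite free Kisin module, and that $\DDGS$ is exact and compatible with tensor products and duality — the last point being convenient when matching the covariant $T_{\cris}$ against the contravariant functor $\Dcrisdual$ used to certify crystallinity and the Hodge--Tate bound via $\Dcris$.

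Next I would show that $\DDGS$ is fully faithful and compute its essential image. Full faithfulness comes from an explicit one-sided inverse: from $\DDGS(M)$ one recovers $M$ by specializing $u \mapsto 0$, recovers $\Fil^i M$ as the preimage of $\Fil^i$ under $M \hookrightarrow \DDGS(M)$, and recovers each $\varphi_i$ by dividing the Kisin Frobenius by $E^i$ before specializing; it is precisely the bound $r \leq p-2$ that makes these divisions integral and ensures no information is lost modulo $p$. The essential image should then be all of $\mathrm{Mod}^{\varphi,\nabla,\leq r}_{\GS}$ — the Fontaine--Laffaille range. For the natural isomorphism $T_{\cris} \cong T_{\GS}\circ\DDGS$ I would fix a $\varphi$- and filtration-compatible map $\GS \to \Acris(\T)$ and show that base change produces an isomorphism $\Acris(\T)\otimes_{\GS}\DDGS(M) \xrightarrow{\ \sim\ } \Acris(\T)\otimes_{\T}M$ of filtered $\varphi$-modules with connection and $G_\T$-action; taking $\Fil^r$ and then the $\varphi_r = 1$ part of both sides, and using the relative form of Fontaine's fundamental exact sequence $0 \to \mathbb{Z}_p \to \Fil^r \Acris(\T) \xrightarrow{\varphi_r-1} \Acris(\T) \to 0$, then identifies $T_{\cris}(M)$ with $T_{\GS}(\DDGS(M))$.

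The step I expect to be the main obstacle is essential surjectivity: every object of $\mathrm{Mod}^{\varphi,\nabla,\leq r}_{\GS}$ — equivalently, via \cite{Fcrystals}, every crystalline $\mathbb{Z}_p$-lattice with Hodge--Tate weights in $[0,r]$ — must be shown to lie in the essential image of $\DDGS$. Given such a Kisin module $\GM$, one descends it along $u \mapsto 0$ to a finite free $\T$-module $M$ and must then verify that the induced filtration is by direct summands, that the divided Frobenii are well defined, and — most delicately — that $M$ satisfies strong divisibility; I would obtain the last by a d\'evissage to the mod-$p$ theory, where the equivalence with torsion crystalline representations is available, followed by Nakayama and $p$-adic approximation to recover the finite free case. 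Simultaneously one must produce the connection $\nabla$ on $M$ and check that it is topologically nilpotent and horizontal for the Frobenius; this is where the smallness of $T$ — formal \'etaleness over the Tate algebra, hence a unique Frobenius lift and Frobenius-descent for connections in the style of \cite{Faltings} and \cite{LMP}, and likewise formal smoothness of $R$ over $W(k)$ — is used essentially, as is, once again, the hypothesis $r \leq p-2$ in controlling the divided-power estimates that appear.
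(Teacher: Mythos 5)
Your high-level architecture — factor $T_{\cris}$ through the Kisin modules / completed prismatic $F$-crystals of \cite{Fcrystals} and reduce to their equivalence with $\mathrm{Rep}_{\mathbb{Z}_p,[0,r]}^{\cris}(G_\T)$ — is the same as the paper's. But the step you explicitly defer as "the main obstacle" is the entire content of the theorem, and the method you propose for it does not go through. You want to show every height-$\leq r$ Kisin module with connection descends along $u\mapsto 0$ to a strongly divisible Fontaine--Laffaille module by "d\'evissage to the mod-$p$ theory, where the equivalence with torsion crystalline representations is available." No such torsion equivalence is available over $R$ or $T$; in the relative setting the torsion theory of \cite{LMP} is a statement about modules, not a classification of torsion crystalline representations, so there is nothing to d\'evisser to. Relatedly, your recipe for recovering $\Fil^i M$ ("the preimage of $\Fil^i$ under $M\hookrightarrow \DDGS(M)$") presupposes an \emph{integral} $\varphi$-compatible splitting of $\DDGS(M)\twoheadrightarrow M$; the existence of such a section is itself a nontrivial result (the paper's Lemma \ref{sectionstable}), proved by base change to the Shilov point $R_g$ and a term-by-term computation in the divided power envelope $S$ — and indeed the paper works with the Breuil module $\SM=\GM\otimes_{\GS,\varphi}S$ rather than with $\GS\otimes_\T M$ precisely because the filtration and $\varphi_i$ only become visible after passing to $S$.

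What actually replaces your d\'evissage in the paper is a two-fiber criterion: Theorem \ref{genericclosed} (from \cite{LMP}) reduces the Fontaine--Laffaille axioms for $M$ over $R$ to checking them after base change to $R_g=W(k_g)$ and to $W(k)$, where the classical theory applies. The real work is then showing the \emph{filtration} is compatible with these base changes — Lemmas \ref{genericdecomp}--\ref{genericcontainment} for the Shilov point (where $S_g$ is too large for a naive tensor argument), Lemmas \ref{TorLemma}--\ref{closeddecomp} and the comparison $\Fil^iM=\F^iM$ for the closed fiber (where $W(k)$ is not $R$-flat, forcing $\Tor_1$ computations) — followed by reduction of the small-base-ring case to $R$ via the completed localizations $\widehat{T_{\mathfrak m}}$ and Theorem \ref{TateLMPTheorem}. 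None of this machinery appears in your outline. Finally, your identification $T_{\cris}\cong T_{\GS}\circ\DDGS$ via a relative fundamental exact sequence $0\to\mathbb{Z}_p\to\Fil^r\Acris(\T)\xrightarrow{\varphi_r-1}\Acris(\T)\to 0$ is not justified; the paper instead proves $T^r_{\GS}(\GM)\cong T_S(\SM)\cong T_{\cris}(M)$ by injectivity plus reduction to the Shilov point (Lemma \ref{GaloisCongLemma}), and separately checks the two descriptions of the $G_\T$-action (via descent data and via $\nabla$) agree. You should either supply the base-change lemmas or find a genuine substitute for them; as written the proposal has a gap at its central step.
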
 Showing this functor is fully faithful is routine, and the difficulty lies in showing essential surjectivity and specifically associating a Fontaine-Laffaille module to a given lattice inside a crystalline representation. \newline

The key input of \cite{Fcrystals} is the existence of a Kisin module $\GM$ over the ring $\GS \coloneqq \T\llbracket u \rrbracket$ associated to an object of $\mathrm{Rep}_{\mathbb{Z}_p, [0,r]}^{\mathrm{cris}}(G_
\T)$. The ring $\GS$ comes equipped with a Frobenius map $\varphi_{\GS}$ extending the Frobenius on $\T$. We can then define the associated Fontaine-Laffaille module as $\GM/u\GM \otimes_{\T, \varphi_{\GS}} \T$, but the Kisin module alone is not enough to obtain the full data of a Fontaine-Laffaille module over $\T$.\newline

In the classical case, Breuil in \cite{Breuil} studied $\mathbb{Z}_p$ lattices in crystalline representations of $G_K$ using strongly divisible $S$-modules. To obtain the full data of our Fontaine-Laffaille module, we define $S$ in the relative setting to be the $p$-adic completion of the divided power envelope of $\GS$ taken with respect to $(E(u))$ with $E(u) = u-p$ or $u+p$. The ring $S$ comes with a canonical divided power filtration and additional structure which will descend to our proposed Fontaine-Laffaille module. \newline

To show the main theorem holds over $R$, we will verify that the data properly descends through the use of two base change maps relating our $R$-module back to the classical theory. Let $k_g$ be the perfection of $\mathrm{Frac}(R/pR)$ and $R_g \coloneqq W(k_g)$, which gives rise to a flat embedding $R \hookrightarrow R_g$. Let $\overline{b}: R \to W(k)$ denote reduction moduo $(t_1, \cdots, t_d)$. A result of \cite{LMP} allows us to verify that $M$ is an object of $\mathrm{MF}^{\mathrm{ff}, [0,r]}_{\nabla}(R)$ by checking that $M \otimes_R R_g$ and $M \otimes_{W(k), \overline{b}} W(k)$ are Fontaine-Laffaille modules over $W(k_g)$ and $W(k)$ in the classical sense.\newline 

To show the main theorem holds over $T$, we obtain a result analogous to that of \cite{LMP} where we can check that $M$ is Fontaine-Laffaille at the $p$-adic completion of localizations at maximal ideals. This enables us to use a base change $T \to R$ as described in Section \ref{SmallBaseChange} and reduce to the setting where the base ring is $R$. \newline

Recently W\"urthen in \cite{Wurthen} proved this result in the setting of a smooth $p$-adic formal scheme over a mixed characteristic complete discrete valuation ring with perfect residue field. Our work uses different methods to focus on the case where the base ring is more limited, and it is routine to extend the results from a small base ring to accommodate the setting of a smooth, $p$-adic formal scheme.  \newline

The  work of Imai, Kato, and Youcis in  \cite{Kato},developed simultaneously, uses different methods to obtain a different fully faithful functor between lattices inside crystalline representations and Fontaine-Laffaille modules in a relative setting. Their functor has the advantage of being extendable beyond the Fontaine-Laffaille range, and it yields objects in a nicer category. Our functor has the advantage of being essentially surjective. They also place additional assumptions on the prismatic $F$-crystal, and their technique has not yet been applied to the case where the base ring is a power series ring. \newline

\textbf{Acknowledgements:} The author would like to thank his Ph.D advisor Tong Liu for suggesting the work in this paper, and for his extensive and invaluable guidance, comments, and conversation throughout the production of this paper and its earlier drafts. This paper includes and generalizes the content of the author's Ph.D. thesis at Purdue University when he was partially supported by the Ross Fellowship of Purdue University and a Summer Research Grant during the summer of 2021. 
\end{section}

\begin{section}{Preliminary Results on some Categories and Functors in the Relative Setting}
\begin{subsection}{Base Ring Conventions}
Much of integral $p$-adic Hodge Theory has been established in the relative case in far more generality than Fontaine-Laffaille theory. We present the most general setting we will consider here. \newline

Recall $k$ is a perfect field of characteristic $p > 2.$ Let $K^{tr}$ be a totally ramified extension of $K = \mathrm{Frac}(W(k))$ with ring of integers $\mathcal{O}_{K^{tr}}$. Fix $\pi$ a uniformizer of $K^{tr}$ and let $E(u) \in W(k)[u]$ denote the monic minimal polynomial of $\pi.$ Write $W(k)\langle T_1^{\pm 1}, \cdots , T_d^{\pm 1}\rangle$ for the $p$-adic completion of the Laurent polynomial ring $W(k)\left[ T_1^{\pm 1}, \cdots , T_d^{\pm 1}\right]$. \newline
\begin{convention}
\label{generalbasering}
In order to reference results with maximal generality, we will say $\mathcal{R}$ is a \emph{general base ring} if $\mathcal{R}$ is a $p$-adically complete $\mathcal{O}_{K^{tr}}$-algebra which is of the form $\mathcal{R} = \mathcal{O}_{K^{tr}} \otimes_{W(k)} \mathcal{R}_0$ where $\mathcal{R}_0$ is an integral domain obtained from $W(k)\langle T_1^{\pm 1}, \cdots , T_d^{\pm 1}\rangle$ by a finite number of iterations of the operations:
\begin{itemize}
\item $p$-adic completion of an \'etale extension
\item $p$-adic completion of a localization;
\item completion with respect to an ideal containing $p$.
\end{itemize}
\end{convention}

Our new results will hold for a subset of these general base rings which we describe next: 

\begin{convention}
\label{small convention}
We will say a general base ring $T$ is \emph{small} if it is $p$-adically completed \'etale over $W(k)\langle T_1^{\pm 1}, \cdots , T_d^{\pm 1}\rangle$ for some $d$. 
\end{convention}

The key strategy for showing our results over small base rings will be to reduce to the case of the power series ring $R = W(k)\llbracket t_1, \cdots , t_d\rrbracket$ due to the local structure of small base rings described in the following lemma:

\begin{lemma}
\label{TateLocalStructure}
Let $T$ be a small base ring that is $p$-adically compaeted \'etale over $W(k)\langle T_1^{\pm 1}, \cdots , T_d^{\pm 1}\rangle$ and let $\mathfrak{m}$ be a maximal ideal (note that $p \in \mathfrak{m}$). Then $\widehat{T_{\mathfrak{m}}}$, the $p$-adic completion of the localization of $T$ at $\mathfrak{m}$, is isomorphic to $W(k)\llbracket t_1, \cdots , t_d \rrbracket$.
\begin{proof}
Let $\mathfrak{m}$ be a maximal ideal of $T$, and let $\Tatem$ denote the $p$-adic completion of the localization of $T$ at $\mathfrak{m}.$ The ring $T$ is a regular Noetherian ring, and thus $\Tatem \cong W(k)\llbracket X_1, \cdots , X_d\rrbracket \cong R$ by Cohen's Structure Theorem.
\end{proof}
\end{lemma}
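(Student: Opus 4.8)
The plan is to reduce the statement to the Cohen structure theorem, so that the only real content is the verification of its hypotheses for $\Tatem$.

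First I would record that $T$ is a regular Noetherian domain of Krull dimension $d+1$. The Tate algebra $W(k)\langle T_1^{\pm 1}, \cdots , T_d^{\pm 1}\rangle$ is Noetherian and $p$-torsion free, its reduction modulo $p$ is the regular ring $k[T_1^{\pm 1}, \cdots , T_d^{\pm 1}]$ of dimension $d$, and $p$ lies in its Jacobson radical because it is $p$-adically complete; hence it is regular of dimension $d+1$. A $p$-adically completed \'etale extension of a Noetherian regular ring is again Noetherian and regular of the same dimension, so the same is true of $T$. Since $T$ is $p$-adically complete, $1 + pT \subseteq T^{\times}$, so $p$ lies in every maximal ideal of $T$; this is why $p \in \mathfrak{m}$.

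Next I would localize and complete at $\mathfrak{m}$. The ring $T_{\mathfrak{m}}$ is regular local of dimension $d+1$, and its residue field $T/\mathfrak{m}$ is finite over $k$: the ideal $\mathfrak{m}/p$ is a maximal ideal of the finite-type $k[T_1^{\pm 1}, \cdots , T_d^{\pm 1}]$-algebra $T/p$, hence contracts to a maximal ideal of $k[T_1^{\pm 1}, \cdots , T_d^{\pm 1}]$ whose residue field is finite over $k$ by the Nullstellensatz; as a finite extension of the perfect field $k$, the field $T/\mathfrak{m}$ is perfect. Passing to the completion $\Tatem$ of $T_{\mathfrak{m}}$ along its maximal ideal, we get a complete regular local ring of dimension $d+1$, of mixed characteristic $(0,p)$, with perfect residue field $T/\mathfrak{m}$. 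I would also check that $p$ is part of a regular system of parameters: $\Tatem/p$ is the completion of the regular local ring $(T/p)_{\overline{\mathfrak{m}}}$, hence regular, and in a regular local ring a nonzero element $x$ with $A/x$ regular lies outside $\mathfrak{m}^{2}$.

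Finally I would invoke the Cohen structure theorem in mixed characteristic: a complete regular local ring of mixed characteristic $(0,p)$ with perfect residue field $\kappa$ in which $p$ is part of a regular system of parameters is isomorphic to $W(\kappa)\llbracket x_1, \cdots , x_n \rrbracket$ with $n = \dim - 1$. Applied to $\Tatem$ this gives $\Tatem \cong W(T/\mathfrak{m})\llbracket x_1, \cdots , x_d \rrbracket$, a ring of the same shape as $R$ (a power series ring in $d$ variables over the Witt vectors of a perfect field of characteristic $p$); since all of our later arguments over $R$ are insensitive to replacing $k$ by a finite extension, nothing is lost in passing from $k$ to $T/\mathfrak{m}$. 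The point I expect to need the most care is ensuring that $p$ is a regular parameter and that $T/\mathfrak{m}$ is perfect, since these are exactly what guarantee that the conclusion is the \emph{unramified} power series ring $W(T/\mathfrak{m})\llbracket x_1, \cdots , x_d \rrbracket$ rather than $V\llbracket x_1, \cdots , x_{d-1}\rrbracket$ for some ramified complete discrete valuation ring $V$; one should also pin down that the completion in play is the one along $\mathfrak{m}$, which (as $p \in \mathfrak{m}$) refines the $p$-adic topology.
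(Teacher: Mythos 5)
Your proof is correct and follows the same route as the paper: the paper's entire argument is the one-line invocation of Cohen's structure theorem for the complete regular local ring $\Tatem$, and you have simply supplied the verifications (regularity and dimension of $T$, $p \in \mathfrak{m}$, perfectness of the residue field, $p$ a regular parameter) that the paper leaves implicit. Two of your cautions are in fact sharper than the paper's own statement. First, Cohen's theorem produces $W(T/\mathfrak{m})\llbracket x_1,\cdots,x_d\rrbracket$ where $T/\mathfrak{m}$ is a finite (hence perfect) extension of $k$, not literally $W(k)\llbracket t_1,\cdots,t_d\rrbracket$; your remark that the later arguments are insensitive to this finite extension is the right way to reconcile this with how the lemma is used. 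Second, your point about which completion is in play is not cosmetic: Cohen's theorem describes the $\mathfrak{m}$-adic completion $\varprojlim T_{\mathfrak{m}}/\mathfrak{m}^n$, whereas the $p$-adic completion $\varprojlim T_{\mathfrak{m}}/p^nT_{\mathfrak{m}}$ reduces modulo $p$ to the localization $(T/pT)_{\overline{\mathfrak{m}}}$ of a finite-type $k$-algebra rather than to $k'\llbracket x_1,\cdots,x_d\rrbracket$, so the two completions genuinely differ and the lemma should be read (and is used throughout the paper) with the $\mathfrak{m}$-adic completion.
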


We will try to reserve the letter $R$ in usual font for referring to such a power series ring, the letter $T$ in usual font for a small base ring, and the stylized $\mathcal{R}$ for a general base ring. We will use the stylized $\mathcal{T}$ to concisely state results that hold for a small base ring or the power series ring $R$. \newline 

When we refer to the ``classical" theory we will be referring to the setting where the base ring is $K$: the historically typical setting of $p$-adic Hodge Theory. \newline

We will now describe the Frobenius structure and the category of Galois representations for a general base ring. We will let $\varphi_{\mathcal{R}}$ denote the lift of Frobenius on $\mathcal{R}/W(k)$ uniquely determined by $\varphi(T_i) = T_i^p$. Let $\overline{\mathcal{R}}$ denote the union of finite $\mathcal{R}$-subalgebras $\mathcal{R}'$ of a fixed algebraic closure of $\mathrm{Frac}(\mathcal{R})$ such that $\mathcal{R}'[p^{-1}]$ is \'etale over $\mathcal{R}[p^{-1}]$. Set $G_{\mathcal{R}} = \mathrm{Gal}(\overline{\mathcal{R}}/\mathcal{R}[p^{-1}])$. Let $\mathrm{Rep}_{\mathbb{Q}_p}(G_{\mathcal{R}})$ denote the category of finite dimensional $\mathbb{Q}_p$ vector spaces with continuous $G_{\mathcal{R}}$ action, and let $\mathrm{Rep}_{\mathbb{Z}_p}^{\mathrm{ff}}(G_{\mathcal{R}})$ denote the full category of finite free $\mathbb{Z}_p$ modules equipped with a continuous action of $\mathcal{G}_\mathcal{R}$. \newline 

\begin{remark}
This describes the Frobenius structure for a general base ring, but we will actually take $\varphi_R(t_i) = (t_i+1)^p -1$ for our base ring $R$ in order to use the results of \cite{LMP} as stated. This is not significant, as we will show the relevant category of Fontaine-Laffaille modules defined below is independent of the choice of Frobenius. \newline
\end{remark}
\end{subsection}
\begin{subsection}{Important Base Change Maps}
\label{BaseChangeSection}
Our main technique for developing the Fontaine-Laffaille Theory for new base rings will be to base change to rings where the Fontaine-Laffaille Theory is already known. We describe the important base change maps here.

\begin{subsubsection}{Shilov Point Base Changes}
\label{ShilovPointBaseChange}
Let $k_g$ be the perfection of $\mathrm{Frac}(\mathcal{R}/p\mathcal{R})$ and let $\mathcal{R}_g \coloneqq W(k_g)$. We have a $\varphi$-compatiable, flat embedding $\mathcal{R} \hookrightarrow \mathcal{R}_g$. Set $\GS_{\mathcal{R},g} \coloneqq W(k_g)\llbracket u\rrbracket$,  and let $S_{\mathcal{R},g}$  be the $p$-adically completed divided power envelope of $\GS_{\mathcal{R},g}$ with respect to $E(u)$ which comes equipped with a $\varphi$ structure and PD-filtration. \newline
 
As the Shilov point above forgets the data of the connection $\nabla$, we will also need to consider a second Shilov point with imperfect residue field. Let $\mathcal{R}_L$ denote the $p$-adic completion of the localization $\mathcal{R}_{(p)}$. Then $\mathcal{R}_L$ is a complete discrete valuation ring with imperfect residue field. We define $\GS_L = \mathcal{R}_L \llbracket u \rrbracket$ and $S_L$ to be the $p$-adically completed divided power envelope of $\GS_L$ with respect to $(E(u)).$ 
\end{subsubsection}
\begin{subsubsection}{Closed Fiber Base Change of a Power Series Ring}
\label{ClosedPointBaseChange}
In the case of our base ring $R$, we also have a natural projection $\pi: R \to W(k)$ given by $t_i \mapsto 0$ for every $i$. Showing that the Fontaine-Laffaille Theory holds for $R$ will amount to checking it is compatible with the Shilov point base change and the closed fiber base change.
\end{subsubsection}
\begin{subsubsection}{Small Base Ring Base Changes}
\label{SmallBaseChange}
We will now discuss a base change map from a small base ring $T$ to $R$ suggested by Lemma \ref{TateLocalStructure}. Let $\mathfrak{m} = (p, f_1, \cdots , f_d)$ be a maximal ideal of $T$ (recall $T$ is regular), and let $\Tatem$ denote the $p$-adic completion of the localization of $T$ at $\mathfrak{m}$ which is isomorphic to $\Tatem \cong W(k)\llbracket X_1, \cdots , X_d\rrbracket \cong R$ by Lemma \ref{TateLocalStructure}. \newline

This gives us a flat base change ring homomorphism $b_{\mathfrak{m}}: T \to \Tatem \cong R$. In order to make this map Frobenius equivariant, though, we must consider $\varphi_R$ to be the lift of the Frobenius on $W(k)$ which acts on the $X_i$ as $\varphi_R(X_1) = b_\mathfrak{m}(\varphi_T(f_i))$. For example, if $\mathfrak{m} = (p, t_1-1, \cdots, t_d - 1)$, then we must take $\varphi_R(X_i) = (X_i+1)^p-1.$ \newline 

By Lemma \ref{FrobEquiv} below, there is a canonical equivalence between the categories of finite, free Fontaine-Laffaille modules over $R$ with different Frobenii, so this choice will not impact our results.  
\end{subsubsection}
\end{subsection}
\begin{subsection}{Review of relative period rings and crystalline representations}
\label{relativeperiodrings}
In this section we briefly review the crystalline period rings and the functor $D_{\cris}$ in the relative case. For full details of their constructions, see \cite[Chapter 6]{BrinonPeriodRings} for a treatment of the period rings and \cite[Chapter 8]{BrinonPeriodRings} for a treatment of relevant functors. Let $\mathcal{R}$ be a general base ring as in Convention \ref{generalbasering}. Let $\widehat{\overline{\mathcal{R}}}$ be the $p$-adic completion of $\overline{\mathcal{R}}$. Set $\overline{\GR}^\flat = \displaystyle \varprojlim_{\varphi} \overline{\GR}/p\overline{\GR}$. Then we define $A_{\mathrm{inf}}(\GR) = W(\overline{\GR}^\flat)$ and write $[\pi^\flat]$ for the Teichm\"uler lift of $\pi^\flat$ where $\pi^\flat = (\pi^{1/p^n})_{n \geq 0}$ is a compatible sequence of $p$-power roots of $\pi$. Let $\theta: \Ainf(\GR) \to \widehat{\overline{\GR}}$ be the unique surjective $W(k)$-algebra homomorphism lifting the first projection.  \newline

Let $\Acris(\GR)$ be the $p$-adic completion of the divided power envelope of $\Ainf(\GR)$ with respect to the kernel of $\theta$. Let $\epsilon_n \in \overline{\GR}$ denote a (non-trivial) sequence of compatible $p$-power roots of unity (i.e. such that $\epsilon_0 = 1), \epsilon_1 \neq 1, \epsilon_{n} = \epsilon_{n+1}^p$. Let $\epsilon = (\epsilon_n)_n \in \overline{\GR}^\flat$ and $t = \log[\epsilon] \in \Acris(\GR).$ Let $\Bcris(\GR) \coloneqq \Acris(\GR)[p^{-1}, t^{-1}].$  \newline

Let $\theta_0$ be the extension of $\theta$ to $\GR_0 \otimes_{W(k)} \Ainf(\GR) \to \widehat{\overline{\GR}}$ and set $\OAcris(\GR)$ to be the $p$-adic completion of the divided power envelope of $\GR_0 \otimes_{W(k)} \Ainf(\GR)$ with respect to the kernel of $\theta_0$. Let $\OBcris(\GR) \coloneqq \OAcris(\GR)[p^{-1}, t^{-1}]$.\newline

For $V \in \mathrm{Rep}_{\mathbb{Q}_p}(G_{\mathcal{R}})$, we have a functor $$D_{\cris}(V) \coloneqq (\mathrm{OB}_{\cris}(\mathcal{R}) \otimes_{\mathbb{Q}_p} V)^{G_{\mathcal{R}}}$$ which is a finite projetive $\mathcal{R}[p^{-1}]$ module of rank at most $\mathrm{dim}_{\mathbb{Q}_p}V$ equipped with a $\varphi$ and an integrable connection $\nabla$  induced by that on $\mathrm{OB}_{\cris}(\mathcal{R}).$ We say that $V$ is crystalline if the natural map $$\mathrm{OB}_{\cris}(\mathcal{R}) \otimes_{\mathcal{R}[p^{-1}]} D_{\cris}(V) \to \mathrm{OB_\cris}(\mathcal{R}) \otimes_{\mathbb{Q}_p} V$$ is an isomorphism. An object $N \in \mathrm{Rep}_{\mathbb{Z}_p}^{\mathrm{ff}}(G_{\mathcal{R}})$ is said to be \emph{crystalline} if $N \otimes_{\mathbb{Z}_p} \mathbb{Q}_p$ is crystalline. There is also a contravariant version of $D_\cris$ defined to be $$D_\cris^\vee(V) \coloneqq \Hom_{G_{\mathcal{R}}}(V, \mathrm{OB}_\cris(\mathcal{R}))$$ dual to the above covariant version. 

\end{subsection}
\begin{subsection}{Finite free Fontaine-Laffaille modules}
Here we introduce the various categories of Fontaine-Laffaille modules in the relative case, following the work of \cite{Faltings} and \cite{LMP}. Here we will need to distinguish between the case of the base ring $R$ and a small base ring $T$ for the statement of some theorems. 

\begin{definition}
\label{RelFLBigDef}
Let $\mathrm{MF}_{\mathrm{big}}^{\mathrm{f}}(R)$ be the category whose objects consist of a an $R$-module $M$, a sequence of $R$-modules $\mathrm{F}^i(M)$, and sequences of $R$-linear maps $\iota_i: \mathrm{F}^i(M) \to \mathrm{F}^{i-1}(M), \pi_i: \mathrm{F}^i(M) \to M,$ and $R$-semi-linear maps $\varphi_i: \mathrm{F}^i(M) \to M$, satisfying the following conditions: \newline
(1) The composition $\mathrm{F}^i(M) \xrightarrow{\iota_i}{} \mathrm{F}^{i-1}(M) \xrightarrow{\pi_{i-1}}{} M $ is the map $\pi_i: \mathrm{F}^i(M) \to M$\newline
(2) The map $\pi_i : \mathrm{F}^i(M) \to M$ is an isomorphism for $i \ll 0.$ \newline
(3) The composition of $\varphi_{i-1}$ with $\iota_i: \mathrm{F}^i(M) \to \mathrm{F}^{i-1}(M)$ is $p\varphi_i.$ \newline
Morphism between these objects are compatible collections of $R$-linear maps between $M$'s and $\mathrm{F}^i(M)$'s.
\end{definition}

We write $\mathrm{MF}_{\nabla, \mathrm{big}}^{\mathrm{f}}(R)$ when we consider the category of such modules equipped with an integrable connection $\nabla: M \to M \otimes_{R} \widehat{\Omega}_{R}$ for which the $\varphi_i$ are parallel and that satisfies Griffiths transversality. Specifically, when the following conditions hold:
\begin{itemize}
    \item Griffiths-transversality holds. i.e. the following diagram commutes: 
    \adjustbox{scale=1,center}{%
  \begin{tikzcd}[scale=1, column sep=large]
    \mathrm{F}^i(M) \arrow{d}{\nabla_i} \arrow{r}{\iota_i} & \mathrm{F}^{i-1}M \arrow{d}{\nabla_{i-1}}\\
    \mathrm{F}^{i-1}M\otimes_R \widehat{\Omega}_R \arrow{r}{\iota_{i-1} \otimes_R \widehat{\Omega}_R} & \mathrm{F}^{i-2}(M) \otimes_R \widehat{\Omega}_R
  \end{tikzcd}
}

    Note that this simplifies to $\nabla (\mathrm{F}^i(M)) \subset \mathrm{F}^{i-1}(M) \otimes_R \widehat{\Omega}_R$ when the $\iota_i$ are injective. 
    \item The $\varphi_i$ are parallel: $\nabla \circ \varphi_i = (\varphi_{i-1} \otimes_R d\varphi_1) \circ \nabla$ as a map from $\mathrm{F}^i(M)$ to $M \otimes_R \widehat{\Omega}_R$ where $d\varphi_1: \widehat{\Omega}_R \otimes_{R, \varphi} R \to \widehat{\Omega}_R$ is $d\varphi /p$.
\end{itemize}

We write $\mathrm{MF}_{\nabla, \mathrm{big}}^{\mathrm{f}, [a,b]}(R)$ when $\mathrm{F}^a(M) = M$ and $\mathrm{F}^{b+1}(M) = \{0\}.$ \newline 

We will replace ``f" with ``tor" when working in the setting of $p$-power torsion modules as in \cite{LMP}. We define these categories analgously when working over small base rings.

\begin{definition}
\label{RelFLDef}
Let $\mathrm{MF}^{\mathrm{ff}}(R)$ denote the full subcategory of $\mathrm{MF}^{\mathrm{f}}(R)$ such that $M$ and $\mathrm{F}^i(M)$ are finitely generated and free, each $\mathrm{F}^{i+1}(M)$ is a direct summand of $\mathrm{F}^{i}(M)$, $\mathrm{F}^i(M) = \{0\}$ for $i \gg 0$, and $\sum_{i} \varphi_i(\mathrm{F}^i(M))$ generate $M.$  We call an object in this category a ``finite free Fontaine-Laffaille module (over $R$)" or sometimes just a ``Fontaine-Laffaille module (over $R$)" for convenience. The tags $\nabla$, $[a, b]$, and ``tor" are used in the same way as for the big category, and we define the categories analagously for small base rings.
\end{definition} 

We next recall a main theorem of \cite{LMP} which we will refine to our finite free setting over the base ring $R$, and we will develop an analogue for small base rings. Recall the Shilov point $R_g$ from Section \ref{ShilovPointBaseChange} and that here is a $\varphi$-compatible, flat embedding $b_g: R \hookrightarrow R_g$. Recall from Section \ref{ClosedPointBaseChange} that there is a natural projection $\overline{b}: R \to W(k)$ determined by $t_i \mapsto 0$ for each $i$. A main theorem of \cite{LMP} is as follows:  \begin{theorem}{\cite[Proposition 2.2.5]{LMP}}
\label{genericclosedtorsion}
\newline Suppose $M \in \mathrm{MF}_{\mathrm{big}}^{\mathrm{tor}, [0, r]}(R)$ such that each $\mathrm{\Fil^i(M)}$ is finite as an $R$-module. Then $M$ is in $\mathrm{MF}^{\mathrm{ff}}(R)$ if and only if: \newline
(1) Both $M_0 \coloneqq M \otimes_{R, \overline{b}} W(k)$ and $M_g \coloneqq M \otimes_{R} R_g$ are objects in $\mathrm{MF}^{\mathrm{tor}, [0,r]}(W(k_g))$ and $\mathrm{MF}^{\mathrm{tor}, [0,r]}(R_g),$ respectively. \newline 
(2) $M_g \cong M_0 \otimes_{W(k)} R_g$ as $R_g$-modules.
\end{theorem}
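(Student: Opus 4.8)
The forward implication I would dispatch as a routine base-change check. Every property defining an object of $\mathrm{MF}^{\mathrm{ff}}(R)$ -- that $M$ and each $\Fil^i(M)$ are finite free in the torsion sense, i.e.\ admit two-term finite free $R$-resolutions $0\to\mathcal{N}\to\mathcal{N}'\to (-)\to 0$; that $\Fil^{i+1}(M)$ is a direct summand of $\Fil^i(M)$; that $\sum_i\varphi_i(\Fil^i M)=M$; and, for the $\nabla$-versions, Griffiths transversality and parallelness of the $\varphi_i$ -- survives the right-exact functors $-\otimes_{R,\overline{b}}W(k)$ and $-\otimes_R R_g$ (over $W(k)$ one uses $\widehat{\Omega}_{W(k)}=0$, over $R_g$ one transports the connection). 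So $M_0\in\mathrm{MF}^{\mathrm{tor},[0,r]}(W(k))$ and $M_g\in\mathrm{MF}^{\mathrm{tor},[0,r]}(R_g)$, and condition (2) holds because $M_g$ and $M_0\otimes_{W(k)}R_g$ are both computed from one and the same finite free presentation of $M$.

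For the converse, assume (1) and (2); the task is to verify over $R$ the three conditions (a) $\sum_i\varphi_i(\Fil^i M)=M$, (b) $M$ and each $\Fil^i(M)$ admit two-term finite free $R$-resolutions, and (c) $\Fil^{i+1}(M)$ is a direct summand of $\Fil^i(M)$. I would get (a) for free by Nakayama at the closed point: the cokernel $C$ of $\bigoplus_i\Fil^i(M)\xrightarrow{(\varphi_i)}M$ is a finitely generated $R$-module (the $\Fil^i(M)$ are finite by hypothesis and $M=\Fil^0(M)$), and $C\otimes_{R,\overline{b}}W(k)$, being the cokernel of $\bigoplus_i\Fil^i(M_0)\to M_0$, vanishes since $M_0\in\mathrm{MF}^{\mathrm{tor},[0,r]}(W(k))$; as $C$ is finite over the local ring $R$ with $C/\mathfrak{m}_R C=0$, we get $C=0$.

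The substance is (b), with (c) a by-product. I would argue by dévissage on the integer $e$ with $p^eM=0$: the objects $\Fil^\bullet(pM)\subseteq\Fil^\bullet(M)$ and $\Fil^\bullet(M/pM)$ again lie in the big category, the sequence $0\to pM\to M\to M/pM\to 0$ is compatible with all the structures and with the relevant base changes, and an extension of a finite free torsion Fontaine--Laffaille module by a finite free torsion one is again one (projective dimension, splitting of the filtration via idempotent lifting, and the Frobenius structure all being preserved under such extensions), so the problem reduces to $pM=0$. Then $M$ is a finite module over the regular local domain $R/pR\cong k\llbracket t_1,\dots,t_d\rrbracket$; the relation $\varphi_{i-1}|_{\Fil^i M}=p\varphi_i$ degenerates to $\varphi_{i-1}|_{\Fil^i M}=0$, so $(\varphi_i)$ factors through $\bigoplus_i\Fil^i(M)/\Fil^{i+1}(M)$ and, by (a), induces a surjection $\bigoplus_i\bigl(\Fil^i M/\Fil^{i+1}M\bigr)^{(1)}\twoheadrightarrow M$ ($(1)$ denoting Frobenius twist along $\varphi\colon R/pR\to R/pR$). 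The ranks of source and target over the domain $R/pR$ are read off from the closed fiber $M_0$ and the Shilov fiber $M_g$, and condition (2) forces them to agree; combining this with the integrable connection $\nabla$ -- which makes this Frobenius-and-connection datum a finite locally free Frobenius crystal over $R/pR$ -- pins the surjection down to an isomorphism and forces $M$, hence each graded piece, hence (lifting the splitting of the filtration on $M_0$ through Nakayama and idempotent lifting) each $\Fil^i(M)$, to be finite free over $R$ with the filtration by direct summands. Running this back up the dévissage gives (b) and (c).

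I expect the decisive obstacle to be precisely the freeness step in the previous paragraph. Because $\mathrm{Spec}\,R$ has Krull dimension $d+1>2$, data at the closed point and the Shilov point alone -- pure commutative algebra -- cannot control $M$ at the intermediate primes; one must genuinely use the full Fontaine--Laffaille package (the surjectivity $\sum_i\varphi_i(\Fil^i M)=M$, Griffiths transversality, and parallelness of the $\varphi_i$) to recognize $M$ as the module underlying a finite locally free Frobenius crystal with connection, which by the relative theory of \cite{Faltings} and \cite{LMP} is automatically finite free over the local ring $R$; condition (2) enters throughout as the mechanism synchronizing the numerical invariants of $M_0$ and $M_g$ so that the dévissage and the rank comparisons close up. Everything else -- right-exactness bookkeeping, Nakayama at the closed point, idempotent lifting, dévissage on the torsion exponent -- is routine.
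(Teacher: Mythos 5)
First, a point of orientation: the paper does not prove this statement at all. It is quoted as \cite[Proposition 2.2.5]{LMP}, and the paper's own contribution begins with Proposition \ref{genericclosed}, which deduces the finite-free version from this torsion version by reducing modulo $p^n$ and taking limits. So there is no in-paper argument to compare yours against; your proposal has to stand on its own.

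On its own terms, the easy parts are fine: the forward direction is routine right-exactness, and your Nakayama argument for $\sum_i\varphi_i(\Fil^i M)=M$ is correct, since the cokernel $C$ is finite over the local ring $R$ and $C\otimes_{R,\overline{b}}W(k)$ is the cokernel of the closed-fiber map, which vanishes. The gap is at exactly the step you flag as decisive. Having reduced to $pM=0$ and produced the surjection $\bigoplus_i(\Fil^iM/\Fil^{i+1}M)^{(1)}\twoheadrightarrow M$, you conclude freeness by declaring $M$ a finite locally free Frobenius crystal with connection, ``which by the relative theory of \cite{Faltings} and \cite{LMP} is automatically finite free.'' This is circular --- the statement being proved \emph{is} the LMP result, and the freeness of such an $F$-module over the $(d+1)$-dimensional ring $R/pR$ from data at only the closed point and the Shilov point is precisely what requires an argument; you correctly observe that commutative algebra at two points cannot do it, but then do not supply the mechanism (a Fitting-ideal or length-comparison argument using that Frobenius twist preserves length over $R/pR$ and that condition (2) synchronizes the ranks at the two fibers). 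Moreover, an object of $\mathrm{MF}_{\mathrm{big}}^{\mathrm{tor},[0,r]}(R)$ in this paper's Definition \ref{RelFLBigDef} carries no connection; $\nabla$ belongs only to the $\nabla$-decorated categories, so your appeal to the integrable connection uses data the hypotheses do not provide. A secondary issue: in the d\'evissage, the filtrations $\Fil^\bullet(pM)$ and $\Fil^\bullet(M/pM)$ need to be defined (the $\mathrm{F}^i$ are abstract modules mapping to $M$, not submodules), and the assertion that an extension of torsion Fontaine--Laffaille modules in the big category is again one is itself a nontrivial theorem of the theory rather than a formal preservation statement.
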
 

Lt $\mathrm{MF}_{\mathrm{big}}^{\mathrm{ff}, [0,r]}(R)$ denote the full subcategory of $\mathrm{MF}_{\mathrm{big}}^{\mathrm{f}, [0,r]}(R)$ where $M$ and each $\Fil^i(M)$ are finite, free $R$-modules. Then we have:

\begin{proposition}
\label{genericclosed}
Theorem \ref{genericclosedtorsion}, holds if each ``$\mathrm{tor}$" is replaced with ``$\mathrm{ff}.$"
\begin{proof}
If $M \in \mathrm{MF}_{\mathrm{big}}^{\mathrm{ff}, [0,r]}(R)$, by tensoring with $W(k)$ and $R_g$ and noting that $M$ and $\Fil^i(M)$ are finite free $R$-modules, it is clear that (1) and (2) hold. \newline

We can show the reverse direction by reducing to the torsion case. Let $n$ be a positive integer. It is clear that $M/p^nM \otimes_{R, \overline{b}} W(k) \cong M \otimes_{R} R/p^nR \otimes_{R, \overline{b}} W(k) \cong M_0/p^nM_0$ and $M/p^nM \otimes_{R} R_g \cong M_g/p^nM_g$ are elements of $\mathrm{MF}^{\mathrm{tor}, [0,r]}(W(k))$ and $\mathrm{MF}^{\mathrm{tor}, [0,r]}(R_g)$, respectively. \newline 

By tensoring $M_g \cong M_0 \otimes_{W(k)} R_g$ with $R/p^nR$, we also see that $M_0/p^nM_0$ and $M_g/p^nM_g$ are of the same type. Then by Theorem \ref{genericclosedtorsion}, we conclude that $M/p^nM$ is an element of $\mathrm{MF}^{\mathrm{tor}, [0,r]}(R).$ Since $M = \varprojlim M/p^nM$ and $\mathrm{F}^i(M) = \varprojlim \mathrm{F}^i(M)/p^n\mathrm{F}^i(M)$ and each is free, we deduce that each is finite free and we have the direct summand condition. Similarly, as $\varphi_i(\mathrm{F}^i(M/p^nM))$ generates $M/p^nM$, the same condition lifts to $M$.
\end{proof}
\end{proposition}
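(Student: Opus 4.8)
The plan is to prove the forward implication directly and the reverse implication by reducing modulo $p^n$ and invoking Theorem~\ref{genericclosedtorsion}. For the forward direction, suppose $M \in \mathrm{MF}^{\mathrm{ff}}(R)$. Since $M$ and every $\Fil^i(M)$ are finite free over $R$ with $\Fil^{i+1}(M)$ a direct summand of $\Fil^i(M)$, the $\varphi$-compatible base changes along $\overline{b}\colon R\to W(k)$ and $b_g\colon R\to R_g$ preserve finite freeness and the direct-summand property, while right exactness of $-\otimes_R W(k)$ and $-\otimes_R R_g$ preserves the surjectivity $\sum_i\varphi_i(\Fil^i(M))=M$ (one rewrites this as surjectivity of $\bigoplus_i\Fil^i(M)\otimes_{R,\varphi_R}R\to M$ and commutes the base change past the Frobenius twist using $\varphi$-compatibility). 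Thus $M_0\in\mathrm{MF}^{\mathrm{ff},[0,r]}(W(k))$ and $M_g\in\mathrm{MF}^{\mathrm{ff},[0,r]}(R_g)$, giving (1); and (2) is immediate because $M$ is finite free of some rank $n$, so both $M_g$ and $M_0\otimes_{W(k)}R_g$ are free $R_g$-modules of rank $n$.

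For the reverse direction, assume (1) and (2) and fix $n\geq 1$. Form $M/p^nM$ with $\Fil^i(M/p^nM)\coloneqq\Fil^i(M)/p^n\Fil^i(M)$ and the induced $\iota_i,\pi_i,\varphi_i$; the $\varphi_i$ descend because $\varphi_R$ fixes $p^n$, so this is an object of $\mathrm{MF}_{\mathrm{big}}^{\mathrm{tor},[0,r]}(R)$ with finite filtration pieces. Since $-\otimes_R R/p^nR$ commutes with $\overline{b}$ and $b_g$, we have $(M/p^nM)\otimes_{R,\overline{b}}W(k)\cong M_0/p^nM_0$ and $(M/p^nM)\otimes_R R_g\cong M_g/p^nM_g$; by (1) these are reductions modulo $p^n$ of finite free Fontaine-Laffaille modules, hence lie in $\mathrm{MF}^{\mathrm{tor},[0,r]}(W(k))$ and $\mathrm{MF}^{\mathrm{tor},[0,r]}(R_g)$ (a routine consequence of the finite free structure), and tensoring the isomorphism in (2) with $R/p^nR$ gives $M_g/p^nM_g\cong(M_0/p^nM_0)\otimes_{W(k)}R_g$. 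Theorem~\ref{genericclosedtorsion} then yields $M/p^nM\in\mathrm{MF}^{\mathrm{tor},[0,r]}(R)$ for every $n$.

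It remains to promote this to a statement about $M$ itself, using $M=\varprojlim_n M/p^nM$ and $\Fil^i(M)=\varprojlim_n\Fil^i(M)/p^n\Fil^i(M)$ together with the hypothesis that $M$ and each $\Fil^i(M)$ are finite free. Injectivity of each $\iota_i$ follows from its injectivity modulo every $p^n$. For the direct-summand condition it suffices to show $Q_i\coloneqq\Fil^i(M)/\Fil^{i+1}(M)$ is free over $R$: from the torsion statements, $Q_i/p^nQ_i$ is a direct summand of the free $R/p^nR$-module $\Fil^i(M)/p^n\Fil^i(M)$, hence free over $R/p^nR$ for all $n$, which forces $Q_i$ to be $p$-torsion free (it is $p$-adically separated), and then $p$-adic completeness of $Q_i$, freeness of $Q_i/pQ_i$ over $R/pR$, and Nakayama's lemma over the local ring $R$ give that $Q_i$ is finite free. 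Finally, $\sum_i\varphi_i(\Fil^i(M))=M$ follows already from the case $n=1$: the finitely generated $R$-submodule $N\coloneqq\sum_i R\cdot\varphi_i(\Fil^i(M))$ satisfies $N+pM=M$, so $N=M$ by Nakayama's lemma. Hence $M\in\mathrm{MF}^{\mathrm{ff}}(R)$.

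I expect the final step to be the only real obstacle: one must confirm that ``$\Fil^{i+1}(M)$ is a direct summand of $\Fil^i(M)$'' can be detected after reduction modulo $p^n$ (it can, via the $p$-torsion freeness and mod-$p$ freeness of $Q_i$ extracted above) and that the Frobenius semilinearity of the $\varphi_i$ does not interfere with the Nakayama argument for generation (it does not, since only the $R$-span of the images enters). The compatibilities of base change along $\overline{b}$, $b_g$ and reduction modulo $p^n$ with the Fontaine-Laffaille data are routine.
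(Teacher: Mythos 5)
Your proposal is correct and follows essentially the same route as the paper: the forward direction by direct base change, and the reverse direction by reducing modulo $p^n$, invoking Theorem~\ref{genericclosedtorsion}, and recovering the finite free structure via $M=\varprojlim_n M/p^nM$. The only difference is that you spell out the final lifting step (freeness of the graded pieces via Nakayama and the generation condition from the $n=1$ case) which the paper leaves implicit.
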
 

We will now build to a theorem analogous to Theorem \ref{genericclosed} but which holds for small base rings. First we will show that the category $\mathrm{MF}_{\nabla}^{\mathrm{ff}}(R)$ is independent of the lift of Frobenius. 
\begin{lemma}[\cite{Faltings}]
\label{FrobEquiv}
Let $a,b$ be such that $0 \leq b-a \leq p-1$. Let $\varphi, \psi$ be two lifts of the natural Frobenius on $W(k)$ to $R$ which coincide modulo $p$. We write $R_{\varphi}$ (resp. $R_{\psi}$) when we are considering $R$ with the Frobenius lift $\varphi$ (resp. $\psi$). Then there is an equivalence between the corresponding categories $\mathrm{MF}_{\nabla}^{\mathrm{ff},[a,b]}(R_{\varphi})$ and  $\mathrm{MF}_{\nabla}^{\mathrm{ff},[a,b]}(R_{\psi})$, and up to canonical isomorphism, $\mathrm{MF}_{\nabla}^{\mathrm{ff}, [a,b]}(R)$ is independent of the lift of Frobenius.  

\begin{proof}
We show the result for the torsion categories, and the result for the finite free categories follows via projective limits. \newline 

The proof of \cite{Faltings} applies almost identically in the torsion case, but we do not have an \'etale map from $W(k)[t_1, \cdots , t_d] \to R$, but we still have that $\{\partial_i\}_{i=1}^d \coloneqq \{\partial/\partial X_i\}_{i=1}^d$ gives the dual basis of $R$-derivations. From there we proceed identically to Faltings. By shifting if necessary, we can assume that $a = 0$, $b = p-1.$ The $\partial/\partial X_i$ act on $M$ via $\nabla.$ Given a multi-index $I = (i_1, \cdots, i_d)$, we get an endomorphism $\nabla(\partial)^I$ of $M$. Let $(\varphi(X)-\psi(X))^I$ denote $\prod_{j=1}^d(\varphi(X_j)-\psi(X_j))^{i_j}$, and $|I| = \sum_{j=1}^d i_j$, and $I! = \prod_{j=1}^d i_j!$. Comparing divisibility by $p$, we can see this gives a well defined element. \newline 

Note that an object $M \in \mathrm{MF}_{\mathrm{big}}^{\mathrm{tor}, [a,b]}(R)$ is in $\mathrm{MF}^{\mathrm{tor}, [a,b]}(R)$ if and only if the Frobenius $\phi$ on $M$ induces an isomorphism $\widetilde{M} \otimes_{R, \varphi} \to M$ where $\widetilde M$ is the right exact functor defined explicitly as the cokernel of the map $$\theta_M: \bigoplus_{i=a+1}^b \mathrm{F}^i(M) \to \bigoplus_{i=a}^b \mathrm{F}^i(M),$$ where $\theta_M$ is given by $$\theta_M((x_{a+1}, \cdots, x_b)) = (\iota_{a+1}(x_{a+1}), -px_{a+1}+\iota_{a+2}(x_{a+2}), \cdots, -px_{b-1} + \iota_{b}(x_b) , -px_b).$$  Equivalently, $\widetilde{M}$ can be defined as the colimit of the following diagram:
$$\cdots \rightarrow \mathrm{F}^{i+1}(M) \leftarrow \mathrm{F}^{i+1}(M) \rightarrow \mathrm{F}^i(M) \leftarrow \mathrm{F}^i(M) \rightarrow \mathrm{F}^{i-1}(M) \leftarrow \cdots$$ where the right arrows denote the $\iota$ maps and the left arrows denote multiplication by $p$.  \newline 

Now we define a map $\alpha: \widetilde{M} \otimes_{R, \varphi} R \to \widetilde{M} \otimes_{R, \psi} R$. Let $m \in F^i(M)$, which defines an element of $\widetilde{M}$, and it suffices to show $\alpha$ is an isomorphism. We define: $$\alpha(m \otimes 1) = \sum_{I} \nabla(\partial)^I(m) \otimes \frac{(\varphi(X) - \psi(X))^I}{I! \cdot p^{\min(|I|, i)}}.$$ Noting that $p^{|I|}$ divides $(\varphi(X)-\psi(x))^I$, we can confirm that $\frac{(\varphi(X) - \psi(X))^I}{I! \cdot p^{\min(|I|, i)}}$ gives a well-defined element of $R$ and that this sum converges to $0$ in the $p$-adic topology. If $|I| \leq i$, the fraction is obviously well-defined as it forces $|I| \leq p-1$ and $v_p(I!) = 0.$ If $|I| > i,$ this can be done by writing each $i_j$ in its base $p$ representation as $i_j = \sum_{k=0}^\infty a_{j,k}p^k$ and using the well-known identity $$v_p(i_j!) = \frac{1}{p-1}\sum_{k=0}^\infty a_{j,k}(p^k-1).$$ Then the $p$-adic valuation of the fraction becomes at least 
\begin{align*}&|I| - \mathrm{min}(|I|, i) - \sum_{j=1}^d \frac{1}{p-1}\sum_{k=0}^\infty a_{j,k}(p^k-1)\\
&= -i +\sum_{j=1}^d\sum_{k=1}^\infty a_{j,k}\left[p^k - \frac{p^k-1}{p-1}\right] \\
&= -i +\sum_{j=1}^d\sum_{k=1}^\infty a_{j,k}\left[ \frac{p^k(p-2) +1}{p-1}\right]
\end{align*}
We can then see that for $p > 2$, $p^k(p-2)+1 > (p-1)^k$. Since $i \leq p-1 < |I|$, the $p$-adic valuation is nonnegative and thus the original fraction is well-defined, and we can see that $|I| - v_p(I!)$ grows without bound as $|I|$ does. \newline 

\color{black}As in Faltings, Taylor's formula $\varphi(r) = \sum_{I} \psi(\nabla(\partial)^I(r)) \otimes (\varphi(X)-\psi(X))^I/I!$ shows us that $\alpha$ gives a well-defined map. The $\alpha$'s satisfy transitivity for three different Frobenius lifts by the binomial formula. Applying this to $\varphi, \psi, \varphi$ gives us that the $\alpha$ are isomorphisms. It is easy to verify that $\alpha$ is parallel for the connections. 
\end{proof}
\end{lemma}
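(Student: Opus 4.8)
The plan is to follow Faltings' original argument in \cite{Faltings}, with the one modification forced on us by the base ring: $R = W(k)\llbracket t_1,\dots,t_d\rrbracket$ is not \'etale over $W(k)[t_1,\dots,t_d]$, but the derivations $\partial_i = \partial/\partial t_i$ still form a dual basis of continuous $R$-derivations, and this is all the argument actually uses. First I would reduce to the torsion categories $\mathrm{MF}^{\mathrm{tor},[a,b]}$: the finite free statement then follows by passing to projective limits over $M/p^nM$ and $\mathrm{F}^i(M)/p^n\mathrm{F}^i(M)$, exactly as in Proposition \ref{genericclosed}. By shifting the filtration I may also assume $a=0$, $b=p-1$. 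The point is that the underlying data of a Fontaine--Laffaille module — the module $M$, the $\mathrm{F}^i(M)$, the maps $\iota_i$, $\pi_i$, and the connection $\nabla$ — does not refer to the Frobenius lift at all, so the whole content of the lemma is to transport the divided-Frobenius structure $\{\varphi_i\}$ across the change of lift, functorially.

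The key reformulation is that $M$ lies in $\mathrm{MF}^{\mathrm{tor},[0,p-1]}(R)$ exactly when its Frobenius induces an isomorphism $\widetilde M \otimes_{R,\varphi} R \xrightarrow{\sim} M$, where $\widetilde M$ is the right-exact functor obtained as the colimit of
$$\cdots \to \mathrm{F}^{i+1}(M) \xleftarrow{p} \mathrm{F}^{i+1}(M) \xrightarrow{\iota} \mathrm{F}^i(M) \xleftarrow{p} \mathrm{F}^i(M) \to \cdots$$
(equivalently, the cokernel of an explicit map out of $\bigoplus_{i>0}\mathrm{F}^i(M)$). It therefore suffices to construct a natural isomorphism $\alpha_{\varphi\psi}\colon \widetilde M \otimes_{R,\varphi} R \to \widetilde M \otimes_{R,\psi} R$ that is parallel for $\nabla$ and compatible with the two Frobenius structures; composing it with the two isomorphisms to $M$ produces the $\varphi_i^{\psi}$ from the $\varphi_i^{\varphi}$ and gives the equivalence of categories. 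I would define $\alpha_{\varphi\psi}$ on the class of $m \in \mathrm{F}^i(M)$ by the Taylor-type formula
$$\alpha_{\varphi\psi}(m \otimes 1) = \sum_I \nabla(\partial)^I(m) \otimes \frac{(\varphi(t) - \psi(t))^I}{I!\, p^{\min(|I|,\, i)}},$$
where $I$ ranges over multi-indices and $(\varphi(t)-\psi(t))^I = \prod_{j=1}^d (\varphi(t_j) - \psi(t_j))^{i_j}$.

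The main obstacle, and the only place where $p>2$ enters, is showing this infinite series makes sense and descends to $\widetilde M$. Since $\varphi \equiv \psi \pmod p$ we get $p^{|I|} \mid (\varphi(t)-\psi(t))^I$; combining this with Legendre's formula $v_p(i_j!) = \tfrac{1}{p-1}\sum_{k} a_{j,k}(p^k-1)$ applied to the base-$p$ digits of each $i_j$, one checks that every coefficient $\frac{(\varphi(t)-\psi(t))^I}{I!\,p^{\min(|I|,i)}}$ genuinely lies in $R$, and that $|I| - v_p(I!) \to \infty$ because for $p>2$ one has $p^k(p-2)+1 > (p-1)^k$; hence the sum converges $p$-adically. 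Well-definedness on the colimit $\widetilde M$ — i.e. compatibility with the $\iota$-maps and with multiplication by $p$ — together with $R$-linearity for the respective Frobenius twists follows from Taylor's identity $\varphi(r) = \sum_I \psi(\nabla(\partial)^I(r))\,(\varphi(t)-\psi(t))^I/I!$ for $r \in R$.

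Finally I would record the formal properties. For three lifts $\varphi,\psi,\chi$ the maps satisfy the cocycle identity $\alpha_{\psi\chi}\circ\alpha_{\varphi\psi} = \alpha_{\varphi\chi}$, which is a multinomial-coefficient computation; specializing $\chi=\varphi$ gives $\alpha_{\psi\varphi}\circ\alpha_{\varphi\psi}=\mathrm{id}$, so $\alpha_{\varphi\psi}$ is an isomorphism. That $\alpha_{\varphi\psi}$ is parallel for the connections and intertwines the divided Frobenii is a direct (if slightly tedious) verification, giving the claimed equivalence $\mathrm{MF}_{\nabla}^{\mathrm{ff},[a,b]}(R_\varphi)\simeq\mathrm{MF}_{\nabla}^{\mathrm{ff},[a,b]}(R_\psi)$; the last assertion, that $\mathrm{MF}_{\nabla}^{\mathrm{ff},[a,b]}(R)$ is well-defined up to canonical isomorphism independently of the lift, is then immediate from the cocycle relation.
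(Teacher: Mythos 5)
Your proposal is correct and follows essentially the same route as the paper's proof: reduction to the torsion case, the reformulation via the right-exact functor $\widetilde M$, the Taylor-type series defining $\alpha$ with coefficients $(\varphi(t)-\psi(t))^I/(I!\,p^{\min(|I|,i)})$, the integrality check via Legendre's formula and the inequality $p^k(p-2)+1>(p-1)^k$, and the cocycle relation specialized to $\varphi,\psi,\varphi$ to get invertibility. No substantive differences to report.
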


We are now ready to state the analogue of Theorem \ref{genericclosed} for a small base ring $T$: 
\begin{theorem}
\label{TateLMPTheorem}
    Suppose $M$ is in $\mathrm{MF}_{\mathrm{big}}^{\mathrm{f}}(T)$ such that each $\mathrm{F}^i(M)$ is finite as a $T$-module. Then $M$ is in $\mathrm{MF}^{\mathrm{ff}}(T)$ if and only if $(M_{\mathfrak{m}}, \mathrm{F}^i(M_{\mathfrak{m}})) \coloneqq (M \otimes_{b_{\mathfrak{m}}, T} \Tatem , \mathrm{F}^i(M) \otimes_{b_{\mathfrak{m}},T} \Tatem) \in \mathrm{MF}^{\mathrm{ff}}(\widehat{T_{\mathfrak{m}}})$ for all maximal ideals $\mathfrak{m}$ of $T$.  

\begin{proof}
Again we show the result in the torsion case and the result in the finite free case follows from taking projective limits. \newline

For the forward direction, it is clear that $M_{\mathfrak{m}}, F^i(M_{\mathfrak{m}})$ are finitely generated $p$-power torsion $\Tatem$-modules with $\mathrm{F}^i(M_{\mathfrak{m}}) = \{0\}$ for $i \gg 0$ since $M$ and $\mathrm{F}^i(M)$ have these properties over $T$. The axioms of $\mathrm{MF}_{\mathrm{big}}(\Tatem)$ are verified directly by properties of the tensor product and the fact that $b_{\mathfrak{m}}$ is $\varphi$-equivariant with our choice of Frobenius. Recalling that $\widetilde{M}$ is a cokernel and again using that $b_{\mathfrak{m}}$ is $\varphi$-equivariant, we obtain an isomorphism $\widetilde{M \otimes_{b_{\mathfrak{m}}, T} \Tatem} \cong \widetilde{M} \otimes_{b_{\mathfrak{m}}, T} \Tatem.$ Thus $\varphi_{M_{\mathfrak{m}}}: \widetilde{M_{\mathfrak{m}}} \otimes_{\Tatem, \varphi_{\mathfrak{m}}} \Tatem$ induces an isomorphism, and $M_{\mathfrak{m}}$ is an object in $\mathrm{MF}(\Tatem).$ \newline 

For the reverse direction, we are assuming that each $\mathrm{F}^i(M)$ is a finite $T$ module, so we need only check that $\varphi_M : \widetilde{M} \otimes_{T, \varphi} T \to M$ is an isomorphism. It suffices to check that this is an isomorphism locally at maximal ideals. Since $\widehat{T_\mathfrak{m}}$ is a Noetherian local ring and $(p)$ is an ideal contained in $\mathfrak{m}$, the ring map $T_\mathfrak{m} \to \widehat{T_\mathfrak{m}}$ is faithfully flat. 
Thus it suffices to check that $\widehat{T_\mathfrak{m}} \otimes \varphi_M$ is an isomorphism for every maximal ideal $\mathfrak{m}$, which is true by assumption since, as in the previous paragraph, $\widetilde{M_{\mathfrak{m}}} \cong \widetilde{M \otimes_{b_{\mathfrak{m}}, T} \Tatem} \cong \widetilde{M} \otimes_{b_{\mathfrak{m}}, T} \Tatem.$
\end{proof}
\end{theorem}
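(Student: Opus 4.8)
The plan is to first reduce to the torsion setting and recover the finite free case by passing to the projective limit over the quotients $M/p^n M$, exactly as in the proof of Proposition \ref{genericclosed}. So we may assume $M$ is $p$-power torsion with each $\mathrm{F}^i(M)$ finite over $T$. The engine of the argument is the characterization of membership in $\mathrm{MF}^{\mathrm{tor}}(T)$ recalled in the proof of Lemma \ref{FrobEquiv}: granting the finiteness of the $\mathrm{F}^i(M)$ and their vanishing for $i \gg 0$, the module $M$ lies in $\mathrm{MF}^{\mathrm{tor}}(T)$ if and only if the Frobenius induces an isomorphism $\widetilde{M} \otimes_{T,\varphi} T \xrightarrow{\sim} M$, where $\widetilde{M}$ is the cokernel of the explicit map $\theta_M$. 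Since $\widetilde{M}$ is by definition a cokernel and tensor product is right exact, its formation is compatible with base change, which is what lets us localize.

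For the forward direction, assume $M \in \mathrm{MF}^{\mathrm{ff}}(T)$. Each $M_\mathfrak{m}$ and $\mathrm{F}^i(M_\mathfrak{m})$ stays finite over $\Tatem$ (in fact free, by flatness of $b_\mathfrak{m}$), and $\mathrm{F}^i(M_\mathfrak{m}) = \{0\}$ for $i \gg 0$; the base-changed maps $\iota_i, \pi_i, \varphi_i$ satisfy the axioms of $\mathrm{MF}_{\mathrm{big}}(\Tatem)$ because $b_\mathfrak{m}$ is $\varphi$-equivariant for the Frobenius $\varphi_R$ attached to $\mathfrak{m}$ in Section \ref{SmallBaseChange}. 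As the formation of $\widetilde{M}$ is a cokernel and tensor product is right exact, $\widetilde{M \otimes_{b_\mathfrak{m}, T}\Tatem} \cong \widetilde{M} \otimes_{b_\mathfrak{m}, T}\Tatem$; base-changing the isomorphism $\widetilde{M} \otimes_{T,\varphi} T \xrightarrow{\sim} M$ then gives the corresponding isomorphism over $\Tatem$, so $M_\mathfrak{m} \in \mathrm{MF}^{\mathrm{ff}}(\Tatem)$.

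For the reverse direction, the finiteness and vanishing hypotheses are supplied, so only the isomorphism $\varphi_M : \widetilde{M} \otimes_{T,\varphi} T \to M$ remains to be checked. This is an isomorphism of finitely generated $T$-modules, hence may be tested after localizing at each maximal ideal $\mathfrak{m}$ of $T$, and then, since $p \in \mathfrak{m}$ so that $T_\mathfrak{m} \to \widehat{T_\mathfrak{m}}$ is faithfully flat, after $p$-adic completion. By the base-change identity above, the completed map is identified with $\varphi_{M_\mathfrak{m}} : \widetilde{M_\mathfrak{m}} \otimes_{\Tatem, \varphi_\mathfrak{m}} \Tatem \to M_\mathfrak{m}$, which is an isomorphism by hypothesis. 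Passing back to the projective limit over $n$ then transfers finite freeness, the direct summand condition $\mathrm{F}^{i+1}(M) \subset \mathrm{F}^i(M)$, and surjectivity of $\sum_i \varphi_i(\mathrm{F}^i(M))$ from $M/p^nM$ to $M$.

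The step I expect to be the main obstacle is confirming that $\widetilde{M}$ commutes with $b_\mathfrak{m}$ in a $\varphi$-compatible manner: this single fact powers both directions, and it depends essentially on $b_\mathfrak{m}$ being $\varphi$-equivariant, which in turn forces the use of the twisted Frobenius $\varphi_R$ together with the Frobenius-independence of $\mathrm{MF}^{\mathrm{ff}}_{\nabla}(R)$ from Lemma \ref{FrobEquiv}. The remainder, namely right-exactness of $\otimes$, descent of isomorphisms along faithfully flat maps, and the projective limit, is routine.
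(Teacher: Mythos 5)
Your proposal is correct and follows essentially the same route as the paper's own proof: reduce to the torsion case and recover the finite free case by projective limits, use the cokernel description of $\widetilde{M}$ together with right-exactness of the tensor product and $\varphi$-equivariance of $b_{\mathfrak{m}}$ to commute $\widetilde{(-)}$ with base change, and in the reverse direction test the isomorphism $\widetilde{M}\otimes_{T,\varphi}T \to M$ after localization and the faithfully flat map $T_{\mathfrak{m}} \to \widehat{T_{\mathfrak{m}}}$. You have also correctly identified the base-change compatibility of $\widetilde{M}$ as the single fact powering both directions, which is exactly how the paper's argument is organized.
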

\end{subsection}
\begin{subsection}{\'Etale $(\varphi_{\mathcal{O}_{\mathcal{E}}}, \mathcal{O}_{\mathcal{E}})$-modules}
In this section and the next we will work over a general base ring $\GR$ as in Convetion \ref{generalbasering}. Here we review the theory of \'etale $(\varphi_{\mathcal{O}_{\mathcal{E}}}, \mathcal{O}_{\mathcal{E}})$-modules in the relative case. \newline

Let $\GS_\mathcal{R} \coloneqq \mathcal{R}\llbracket u\rrbracket$ with Frobenius $\varphi_{\GS_{\mathcal{R}}}$ obtained by setting $\varphi_{\GS_{\mathcal{R}}}(u) = u^p.$ Let $\mathcal{O}_{\mathcal{E}, \mathcal{R}} = \etalering$ be the $p$-adic completion of $\GS_{\mathcal{R}}[u^{-1}]$ with Frobenius $\varphi_{\mathcal{O}_{\mathcal{E}}}$ extending that on $\GS_{\mathcal{R}}.$ \newline 

\begin{definition}
An \'etale ($\varphi_{\etalering}$, $\etalering$)-module is a pair $(\mathcal{M}, \varphi_{\mathcal{M}})$ where $\mathcal{M}$ is a finitely generated $\etalering$-module and $\varphi_{\mathcal{M}}: \mathcal{M} \to \mathcal{M}$ is a $\varphi$-semilinear endomorphism such that the linearization $1 \otimes \varphi_{\mathcal{M}}: \varphi^*\mathcal{M} \to \mathcal{M}$ (i.e. the map $c \otimes m \mapsto c\varphi_{\mathcal{M}}(m)$ for $c \otimes m \in \etalering \otimes_{\varphi_{\etalering}, \etalering} \mathcal{M}$) is an isomorphism. 
\end{definition}

Let $\mathrm{Mod}^{pr}_{\mathcal{O}_{\mathcal{E}}}$ denote the category of projective \'etale ($\varphi_{\etalering}$, $\etalering$)-modules whose morphisms are $\varphi_{\etalering}$-compatible $\etalering$-linear maps and $\mathrm{Mod}^{ff}_{\mathcal{O}_{\mathcal{E}}}$ the category with the projective condition replaced by finite free. We now recall how \'etale ($\varphi_{\etalering}$, $\etalering$)-modules relate to Galois representations. \newline 

Recall that $\pi_n \in \overline{K^{tr}}$ is chosen compatibly so that $\pi_0 = \pi$ and $\pi^p_{n+1} = \pi_n$. Set $K_{\infty}$ to be the $p$-adic completion of $\cup_{n=0}^\infty K^{tr}(\pi_n)$ and $K_{\infty}^\flat$ its tilt. Let $E_{\mathcal{R}_{\infty}}^+ = \GS/p\GS$ and $\tilde{E}_{\mathcal{R}_{\infty}}^+$ denote the $u$-adic completion of $\varinjlim_{\varphi} E_{R_\infty}^+$. Write $\tilde{\mathcal{R}}_\infty$ for $W(\tilde{E}_{\mathcal{R}_{\infty}}^+) \otimes_{W(K_\infty^\flat)} \mathcal{O}_{K_{\infty}}$.

\begin{remark}
While we have described $\tilde{\mathcal{R}}_{\infty}$ in full generality for completeness, for our base ring $R$ we will only need to use $\tilde{R}_{\infty}$ which has a much more explicit description as $$\tilde{R}_{\infty} = \bigcup_{n \geq 1, 1 \leq i \leq d} R\left(\pi_n, \sqrt[p^n]{1+t_i}\right)$$ for a fixed choice of compatibly chosen $p^n$th roots of $1+ t_i$.
\end{remark}

Then we have the following relationship: 

\begin{proposition}{\cite[Prop. 7.7]{Kim}, \cite[Prop 2.16]{Fcrystals}}
\label{latticeetalephi}
There is a functor $\mathcal{M}$ from the category $\mathrm{Rep}_{\mathbb{Z}_p}^{\mathrm{ff}}(G_{\tilde{\mathcal{R}}_{\infty}})$ of finite free $\mathbb{Z}_p$-modules with continuous $G_{\tilde{\mathcal{R}}_{\infty}}$ action to the category $\mathrm{Mod}^{pr}_{\mathcal{O}_{\mathcal{E}}}$ which is an exact equivalence of categories. The inverse of $\mathcal{M}$ is given by $$T_{\etalering}(\mathcal{M}) \coloneqq (\widehat{\etalering}^{\mathrm{ur}} \otimes_{\etalering} \mathcal{M})^{\varphi=1}.$$ This functor behaves well with respect to base change in the following sense: Let $\mathcal{R}'$ be another base ring equipped with Frobenius satisfying the same conditions as $\mathcal{R}$ with a $\varphi$-equivariant map $\mathcal{R}_0 \rightarrow \mathcal{R}_0'$. If $T \in \mathrm{Rep}_{\mathbb{Z}_p}^{\mathrm{ff}}(G_{\tilde{R}_{\infty}})$, then $T$ can be considered as a $G_{\tilde{\mathcal{R}}_{\infty}'}$-representation via the map $G_{\tilde{\mathcal{R}}_\infty'} \rightarrow G_{\tilde{\mathcal{R}}_\infty}$ and we have the isomorphism $$\mathcal{O}_{\mathcal{E}, \mathcal{R'}} \otimes_{\etalering} \mathcal{M}(T) \cong \mathcal{M}_{\tilde{\mathcal{R}}'}(T)$$ as \'etale ($\varphi_{\mathcal{O}_{\mathcal{E}, \mathcal{R}'}}$, $\mathcal{O}_{\mathcal{E}, \mathcal{R}'}$)-modules. 
\end{proposition}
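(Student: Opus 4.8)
The plan is to derive this from \cite[Prop.~7.7]{Kim} and \cite[Prop.~2.16]{Fcrystals}, whose proof is the relative incarnation of Fontaine's classical classification of $\mathbb{Z}_p$-representations by \'etale $(\varphi,\etalering)$-modules; I will recall the structure of that argument and point out the single place where something genuinely base-ring-dependent enters.

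Recall that $\widehat{\etalering}^{\mathrm{ur}}$ is the $p$-adic completion of the maximal unramified extension of $\etalering$, sitting inside a Witt-vector period ring and carrying compatible Frobenius and $G_{\tilde{\mathcal{R}}_{\infty}}$-actions, and that $\mathcal{M}$ is the functor $T \mapsto (\widehat{\etalering}^{\mathrm{ur}} \otimes_{\mathbb{Z}_p} T)^{G_{\tilde{\mathcal{R}}_{\infty}}}$. The two computations driving everything are $(\widehat{\etalering}^{\mathrm{ur}})^{\varphi=1} = \mathbb{Z}_p$ and $(\widehat{\etalering}^{\mathrm{ur}})^{G_{\tilde{\mathcal{R}}_{\infty}}} = \etalering$, together with the vanishing of the first cohomology of $\varphi - 1$ (respectively of $G_{\tilde{\mathcal{R}}_{\infty}}$) on finitely generated $\widehat{\etalering}^{\mathrm{ur}}$-modules. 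I would obtain these by the standard d\'evissage: reduce modulo $p$ to the characteristic-$p$ ring $\etalering/p\etalering$, where the assertions become Artin--Schreier theory and the normal-basis theorem for the pro-(finite \'etale) cover cut out by $G_{\tilde{\mathcal{R}}_{\infty}}$; bootstrap to $\mathbb{Z}/p^n$-coefficients via the snake lemma; pass to inverse limits for finite free $T$; and take direct summands for projective modules. Granting these inputs, the natural comparison maps $\widehat{\etalering}^{\mathrm{ur}} \otimes_{\etalering} \mathcal{M}(T) \to \widehat{\etalering}^{\mathrm{ur}} \otimes_{\mathbb{Z}_p} T$ and $\widehat{\etalering}^{\mathrm{ur}} \otimes_{\mathbb{Z}_p} T_{\etalering}(\mathcal{M}) \to \widehat{\etalering}^{\mathrm{ur}} \otimes_{\etalering} \mathcal{M}$ are isomorphisms, and applying $(-)^{G_{\tilde{\mathcal{R}}_{\infty}}}$, respectively $(-)^{\varphi=1}$, exhibits $\mathcal{M}$ and $T_{\etalering}$ as mutually quasi-inverse; exactness of $\mathcal{M}$ is then a consequence of the $\mathbb{Z}_p$-flatness of $\widehat{\etalering}^{\mathrm{ur}}$ and the $H^1$-vanishing, and exactness of $T_{\etalering}$ follows formally.

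For the base-change compatibility, the $\varphi$-equivariant map $\mathcal{R}_0 \to \mathcal{R}_0'$ together with the compatible choice of $p$-power roots induces $\etalering = \mathcal{O}_{\mathcal{E},\mathcal{R}} \to \mathcal{O}_{\mathcal{E},\mathcal{R}'}$, a surjection $G_{\tilde{\mathcal{R}}'_{\infty}} \twoheadrightarrow G_{\tilde{\mathcal{R}}_{\infty}}$, and a $\varphi$- and Galois-equivariant map $\widehat{\etalering}^{\mathrm{ur}} \to \widehat{\mathcal{O}_{\mathcal{E},\mathcal{R}'}}^{\mathrm{ur}}$; these yield a canonical map $\mathcal{O}_{\mathcal{E},\mathcal{R}'} \otimes_{\etalering} \mathcal{M}(T) \to \mathcal{M}_{\tilde{\mathcal{R}}'}(T)$. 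To see it is an isomorphism I would apply the already-established equivalence over $\mathcal{R}'$: one has $\widehat{\mathcal{O}_{\mathcal{E},\mathcal{R}'}}^{\mathrm{ur}} \otimes_{\etalering} \mathcal{M}(T) \cong \widehat{\mathcal{O}_{\mathcal{E},\mathcal{R}'}}^{\mathrm{ur}} \otimes_{\widehat{\etalering}^{\mathrm{ur}}} \big(\widehat{\etalering}^{\mathrm{ur}} \otimes_{\mathbb{Z}_p} T\big) \cong \widehat{\mathcal{O}_{\mathcal{E},\mathcal{R}'}}^{\mathrm{ur}} \otimes_{\mathbb{Z}_p} T$ using the comparison isomorphism over $\mathcal{R}$, so taking $\varphi=1$-invariants over $\mathcal{R}'$ gives $T_{\etalering}\big(\mathcal{O}_{\mathcal{E},\mathcal{R}'} \otimes_{\etalering} \mathcal{M}(T)\big) \cong T$; on the other hand $T_{\etalering}(\mathcal{M}_{\tilde{\mathcal{R}}'}(T)) \cong T$ by the $\mathcal{R}'$-equivalence, and these identifications are compatible with the canonical map, so full faithfulness of the $\mathcal{R}'$-equivalence forces it to be an isomorphism.

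I do not expect the formal scaffolding above to be the obstacle; the real content lies in the Artin--Schreier/normal-basis input, that is, in knowing that $\etalering$, its residue ring, and the group $G_{\tilde{\mathcal{R}}_{\infty}}$ are ``local enough'' for the maximal unramified extension and its cohomology to behave exactly as in the classical one-variable situation. For the power-series ring $R$ and for small base rings $T$ this is precisely what \cite{Kim} and \cite[\S2]{Fcrystals} establish (the explicit description of $\tilde{R}_{\infty}$ recalled above being the convenient simplification in the $R$ case), so the only verification required here is that every base ring permitted by Convention~\ref{generalbasering} falls within the hypotheses of those references, which is immediate from the list of allowed operations.
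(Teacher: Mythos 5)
The paper gives no proof of this proposition at all --- it is recalled verbatim from \cite[Prop.\ 7.7]{Kim} and \cite[Prop.\ 2.16]{Fcrystals} --- so there is no internal argument to compare against, and your outline is a faithful reconstruction of the standard argument in those references: the Artin--Schreier d\'evissage modulo $p$, Galois descent along the pro-finite-\'etale tower (what you call the normal-basis input; strictly it is descent for finite \'etale Galois covers, since $\etalering/p\etalering$ is not a field), bootstrapping through $\mathbb{Z}/p^n$-coefficients to limits and projective summands, and the formal derivation of the base-change isomorphism from the two comparison isomorphisms plus full faithfulness over $\mathcal{R}'$. You also correctly locate the one genuinely base-ring-dependent ingredient --- that $(\widehat{\etalering}^{\mathrm{ur}})^{\varphi=1}=\mathbb{Z}_p$, $(\widehat{\etalering}^{\mathrm{ur}})^{G_{\tilde{\mathcal{R}}_{\infty}}}=\etalering$, and the attendant $H^1$-vanishing hold for the rings of Convention \ref{generalbasering} --- which is precisely what the cited propositions are invoked to supply, so the proposal is consistent with how the paper itself handles the statement.
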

\end{subsection}
\begin{subsection}{Kisin Modules}
\label{KisinModulesSection}
We next review Kisin modules in the relative case and the main result of \cite{Fcrystals} which provides the key input to the proof of our main theorem. \newline

First we recall the the ring $\GST$ considered in \cite{Fcrystals}. Let $\mathcal{R}_\Prism$ denote the absolute prismatic site of $\mathcal{R}$. In full generality, $\GS$ is the ring $\mathcal{R}_0\llbracket u\rrbracket$ and $E$ is an choice of minimal Eisenstein polynomial of $\pi$ (in our setting, $E(u) = u-p$ or $u+p$). We write $(\GS, (E))$ for the Breuil-Kisin prism. Then $(\GST, (E))$ is defined to be the self-product of $(\GS, (E))$ in $\mathcal{R}_\Prism$ and that it satisfies a universal property as follows: \newline

If $(B, I) \in \mathcal{R}_\Prism$ and if we are given maps $f_1, f_2: (\GS, (E)) \to (B, I)$ such that the maps $\mathcal{R} \to B/I$ induced by $f_1, f_2$ agree, then there is a map $(\GST, (E)) \to (B, I)$ uniquely determined by $f_1, f_2.$ We will write $p_1, p_2$ for the maps $(\GS, (E)) \to (\GST, (E))$. We will also utilize the triple-self product of $(\GS, (E))$ denoted $(\GS^{(2)}, (E))$. \newline

The details of the construction of $(\GST, (E))$ and a justification that this self-product exists can be found in \cite[Example 3.4]{Fcrystals}. We now introduce the category of free Kisin modules with descent data. \newline

\begin{definition}[\cite{Fcrystals} Definition 3.24]
\label{dddef}
Let $\mathrm{DD}_{\GS}$ denote the category consisting of triples $(\GM, \varphi_{\GM}, f)$ where
\begin{itemize}
    \item $\GM$ is a finite $\GS$-module that is projective away from $(p,E)$ and saturated.
    \item $\varphi_{\GM}: \GM \to \GM$ is a $\varphi$-semi-linear endomorphism such that $(\GM, \varphi_{\GM})$ has finite $E$-height.
    \item $f: \GST \otimes_{p_1, \GS} \GM \to \GST \otimes_{p_2, \GS} \GM$ is an isomorphism of $\GST$-modules that is compatible with Frobenii and satisfies the cocycle condition over $\GS^{(2)}.$
    \end{itemize}
An object of this category is called an \textbf{integral Kisin descent datum}. If we replace the third piece of data with a map $f: \GST[p^{-1}] \otimes_{p_1, \GS} \GM \to \GST[p^{-1}] \otimes_{p_2, \GS} \GM$ that is an isomorphism of $\GST[p^{-1}]$-modules that is compatible with Frobenii and satisfies the cocycle condition over $\GS^{(2)}[p^{-1}]$, we call such an object a \textbf{rational Kisin descent datum}. Let $\mathrm{DD}_{\GS, [0,r]}^{\mathrm{ff}}$ denote the full subcateogry consisting of objects with $E$-height $\leq r$ and which are finite free over $\GS$. \newline
\end{definition}

\begin{remark}
The category $\mathrm{DD}_{\GS}$ is equivalent to the category of completed prismatic $F$-crystals on $\mathcal{R}$ introduced in \cite{Fcrystals}. 
\end{remark}

The main theorem of \cite{Fcrystals} is the following:

\begin{theorem}[\cite{Fcrystals} Prop. 3.25, Theorem 3.28]
\label{equivalencedescentrep}
    There is an equivalence of categories between $\mathrm{DD}_{\GS, [0,r]}$ and $\mathrm{Rep}_{\mathbb{Z}_p, [0,r]}^{\cris}$.
\end{theorem}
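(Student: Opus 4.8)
The plan is to build functors in both directions---one attaching to an integral Kisin descent datum of $E$-height $\leq r$ a crystalline representation of $G_{\GR}$, the other recovering the descent datum through the relative prismatic $F$-crystal formalism---and to check they are mutually quasi-inverse, the load-bearing steps being crystallinity and the prismatic Galois descent. For the first functor, given $(\GM, \varphi_{\GM}, f)$ I would invert $u$ and $p$-adically complete to form $\mathcal{M} \coloneqq \etalering \otimes_{\GS} \GM$; since $\GM$ is projective away from $(p, E)$ with finite $E$-height $\leq r$, the pair $(\mathcal{M}, \varphi_{\mathcal{M}})$ is a projective \'etale $(\varphi_{\etalering}, \etalering)$-module, so Proposition \ref{latticeetalephi} attaches to it a $\mathbb{Z}_p$-lattice representation $T_{\etalering}(\mathcal{M})$ of $G_{\tilde{\GR}_\infty}$. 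The isomorphism $f$ over $\GST$, together with the base-change compatibility in Proposition \ref{latticeetalephi} applied to the two structure maps $p_1, p_2 \colon \GS \to \GST$, furnishes descent data on $T_{\etalering}(\mathcal{M})$; the cocycle condition over $\GS^{(2)}$ is exactly what promotes these descent data to a continuous action of the full group $G_{\GR}$, because after inverting $u$ the self-product prism $(\GST, (E))$ records the indeterminacy of the Breuil--Kisin tower, i.e.\ the coset space $G_{\GR}/G_{\tilde{\GR}_\infty}$.

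Next I would verify that the resulting $G_{\GR}$-representation, rationally, is crystalline with Hodge--Tate weights in $[0,r]$. Base changing $\GM$ along the $\varphi$-compatible map $\GS \to \Acris(\GR)$ sending $u \mapsto [\pi^\flat]$, and its $\OAcris(\GR)$-enhancement which carries the connection, the hypothesis $E$-height $\leq r$ equips $\Acris(\GR) \otimes_{\GS} \GM$ with a filtration concentrated in degrees $[0,r]$ and divided Frobenii; inverting $t$ then yields an isomorphism $\OBcris(\GR) \otimes_{\GR[p^{-1}]} \Dcris(V) \xrightarrow{\sim} \OBcris(\GR) \otimes_{\mathbb{Q}_p} V$ with $V \coloneqq T_{\etalering}(\mathcal{M})[p^{-1}]$, exhibiting $V$ as crystalline and pinning the jumps of its Hodge filtration inside $[0,r]$. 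This is the step that genuinely uses the relative crystalline comparison and that is most sensitive to the connection $\nabla$ and Griffiths transversality.

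For the inverse functor I would invoke the relative theory of prismatic $F$-crystals (\cite{Fcrystals}): a crystalline $\mathbb{Z}_p$-representation of $G_{\GR}$ with Hodge--Tate weights in $[0,r]$ corresponds to an $F$-crystal of $E$-height $\leq r$ on the absolute prismatic site $\GR_{\Prism}$, and $\mathrm{DD}_{\GS}$ is its concrete avatar via the Breuil--Kisin prism. Evaluating such a crystal on $(\GS, (E))$ recovers $(\GM, \varphi_{\GM})$, evaluating on $(\GST, (E))$ and comparing the two structure maps produces the gluing isomorphism $f$, and evaluating on $(\GS^{(2)}, (E))$ yields the cocycle identity, so one lands in $\mathrm{DD}_{\GS, [0,r]}$. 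The two functors are quasi-inverse because the \'etale realization (inverting $u$ and applying Proposition \ref{latticeetalephi}) recovers the Galois representation exactly, while a Kisin lattice of bounded $E$-height inside a prescribed \'etale $(\varphi_{\etalering}, \etalering)$-module is unique, so the crystalline/prismatic side is rigid.

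The hard part will be the crystallinity step and its converse---matching ``finite $E$-height $\leq r$'' on the Kisin side with ``crystalline with Hodge--Tate weights in $[0,r]$'' on the Galois side---which cannot be carried out formally and requires the full package of relative integral $p$-adic Hodge theory: the ring $\OAcris(\GR)$ with its connection, the period-ring comparison isomorphisms, and the interaction of the divided Frobenii with the filtration. The prismatic Galois descent---faithfully translating the pair (isomorphism $f$, cocycle condition) into a genuine continuous $G_{\GR}$-action and back---is the secondary technical point, as it hinges on an explicit understanding of $\tilde{\GR}_\infty$ and of $\GST$ after inverting $u$.
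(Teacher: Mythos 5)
This statement is not proven in the paper at all: it is imported verbatim from \cite{Fcrystals} (Proposition 3.25 and Theorem 3.28), so there is no in-paper argument to compare yours against. Judged on its own terms, your sketch does reproduce the correct two-stage architecture of the cited result: Proposition 3.25 of \cite{Fcrystals} is precisely the formal equivalence between $\mathrm{DD}_{\GS}$ and (completed) prismatic $F$-crystals, obtained by evaluating a crystal on the Breuil--Kisin prism $(\GS,(E))$, on the self-product $(\GST,(E))$ to extract the gluing isomorphism $f$, and on $(\GS^{(2)},(E))$ for the cocycle identity; and Theorem 3.28 is the equivalence between prismatic $F$-crystals and $\mathrm{Rep}_{\mathbb{Z}_p,[0,r]}^{\cris}$. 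Your description of the forward functor (invert $u$, apply Proposition \ref{latticeetalephi} to get a $G_{\tilde{\GR}_\infty}$-lattice, then use $f$ and the cocycle condition to descend to a full $G_{\GR}$-action) is also the mechanism actually used in \cite{Fcrystals} and echoed in this paper's construction of $T_{\GS}(\GM)$.

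The genuine gap is in your ``inverse functor'' and crystallinity steps: there you \emph{invoke} the correspondence between crystalline $\mathbb{Z}_p$-representations and prismatic $F$-crystals of $E$-height $\leq r$, but that correspondence \emph{is} Theorem 3.28 of \cite{Fcrystals}, i.e.\ the very content of the statement you are proving. As written, the proposal is circular at exactly the load-bearing point you yourself identify as the hard part. A non-circular argument would have to actually establish (i) that the $G_{\GR}$-representation built from a descent datum is crystalline --- which in \cite{Fcrystals} goes through constructing the connection on $\GM/u\GM[p^{-1}]$ from $f$ (their Construction 4.3 and Theorem 4.28) and comparing with $\OBcris(\GR)$ --- and (ii) essential surjectivity, i.e.\ producing the Kisin module from a crystalline lattice, which is their Theorem 4.19 and requires the uniqueness of the bounded-height lattice inside the \'etale $\varphi$-module over the Shilov point. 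Your sketch names these ingredients but does not supply them, so it should be regarded as a correct road map of the citation rather than a proof.
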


A main result of \cite{Fcrystals} is the association of a Kisin module to a crystalline representation:

\begin{theorem}{\cite[Theorem 4.19]{Fcrystals}}
\label{quasiKisinTheorem}
Let $V$ be a crystalline $\mathbb{Q}_p$ representation of $G_{\mathcal{R}}$ with Hodge-Tate weights in $[0,r]$, and let $\Lambda$ be a finite free $\mathbb{Z}_p$ lattice of $V$ stable under the $G_{\mathcal{R}}$ action. Let $\mathcal{M}$ be the  \'etale ($\varphi_{\etalering}$, $\etalering$)-module associated to $\Lambda$ as in Proposition \ref{latticeetalephi}. Then there exists a $\GS$ submodule $\GM$ of $\mathcal{M}$ stable under Frobenius such that:
\begin{itemize}
\item $\GM$ with the induced Frobenius is a Kisin module over $\GS$ of $E$-height $\leq r$.
\item $\etalering \otimes_{\GS} \GM = \mathcal{M}$ 
\item $\GM$ is equipped with the data of $\nabla_{\GM}$, a map $$\nabla_{\GM}: (\mathcal{R}_0 \otimes_{\varphi, \mathcal{R}_0} \GM/u\GM)[p^{-1}] \to (\mathcal{R}_0\otimes_{\varphi, \mathcal{R}_0} \GM/u\GM)[p^{-1}] \otimes_{\mathcal{R}_0} \widehat{\Omega}_{\mathcal{R}_0}$$ which is a topologically quasi-nilpotent connection commuting with Frobenius and satisfying rational $S$-Griffiths transversality. (See \cite[Def 4.1]{Fcrystals} and Definition \ref{SGriffiths} below.)
\end{itemize} 
\end{theorem}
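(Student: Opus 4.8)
The plan is to deduce this from the prismatic classification already available. By Theorem \ref{equivalencedescentrep}, the lattice $\Lambda \subset V$ --- being crystalline with Hodge--Tate weights in $[0,r]$ --- corresponds to an integral Kisin descent datum $(\GM, \varphi_{\GM}, f) \in \DDGS$ of $E$-height $\leq r$; equivalently, by the Remark after Definition \ref{dddef}, to a completed prismatic $F$-crystal $\mathscr{E}$ on $\GR_\Prism$. The underlying module $\GM$ is the value of $\mathscr{E}$ on the Breuil--Kisin prism $(\GS,(E))$: it is finite over $\GS$, projective away from $(p,E)$, and saturated by construction, and it is $p$-torsion free since $\Lambda$ is a lattice in a genuine $\mathbb{Q}_p$-representation. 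The $E$-height bound is the translation of the Hodge--Tate condition: the cokernel of the linearized Frobenius $\GS \otimes_{\varphi,\GS} \GM \to \GM$ is killed by $E(u)^r$. This gives the first bullet.

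For the second bullet I would pass to the \'etale locus. Inverting $u$ and $p$-completing turns $(\GS,(E))$ into a chart for the structure sheaf on the \'etale/Laurent locus of $\GR_\Prism$, and the value of $\mathscr{E}$ there is the \'etale realization of the $F$-crystal. The compatibility of the prismatic--\'etale comparison with the functor of Proposition \ref{latticeetalephi} identifies this \'etale realization canonically with $\mathcal{M} = \mathcal{M}(\Lambda)$, so $\etalering \otimes_{\GS} \GM \cong \mathcal{M}$. Since $\GM$ is finite and $u$-torsion free, the natural map $\GM \to \etalering \otimes_{\GS} \GM$ is injective; transporting along the isomorphism realizes $\GM$ as a $\varphi$-stable $\GS$-submodule of $\mathcal{M}$.

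The third bullet is the substantive point. The descent isomorphism $f \colon \GST \otimes_{p_1,\GS} \GM \to \GST \otimes_{p_2,\GS} \GM$, together with its cocycle condition over $\GS^{(2)}$, is precisely a stratification of $\GM$ relative to the formal groupoid $(\GS, \GST, \GS^{(2)})$. Reducing modulo $u$ collapses the arithmetic direction, and --- after the Frobenius twist $\GR_0 \otimes_{\varphi,\GR_0}(-)$ and inverting $p$, where $p_1$ and $p_2$ become infinitesimally close along a divided-power thickening --- the stratification yields an integrable connection $\nabla_{\GM}$ on $D \coloneqq (\GR_0 \otimes_{\varphi,\GR_0} \GM/u\GM)[p^{-1}]$ with values in $D \otimes_{\GR_0} \widehat{\Omega}_{\GR_0}$. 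Topological quasi-nilpotence is the crystal axiom for $\mathscr{E}$; commutation with Frobenius follows because $f$ is Frobenius-compatible; and rational $S$-Griffiths transversality (Definition \ref{SGriffiths}) is obtained by comparing $D$ with $\Dcrisdual(V)$: passing to the divided-power envelope $S$ of $\GS$ along $(E)$ and using the prismatic comparison over the complement of $(p,E)$ matches $\nabla_{\GM}$ with the connection on $\Dcrisdual(V)$ and its Hodge filtration, for which Griffiths transversality holds by the construction of $\Dcris$.

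The main obstacle is precisely this last step: constructing $\nabla_{\GM}$ with all four properties simultaneously, and in particular verifying rational $S$-Griffiths transversality, forces one to move carefully between the prismatic picture over $(\GS,(E))$, the Breuil-module picture over the PD-envelope $S$, and the module-with-connection $\Dcrisdual(V)$ over $\GR[p^{-1}]$, checking that every comparison isomorphism respects Frobenius, filtration, and connection. This is the technical core of \cite[Section 4]{Fcrystals}; the remaining assertions (the $E$-height bound, the \'etale identification, finiteness and saturation) are comparatively formal once Theorem \ref{equivalencedescentrep} is in hand, so in the body of the paper this statement is simply quoted from \cite{Fcrystals}.
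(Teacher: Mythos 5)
The paper does not prove this statement at all: it is imported verbatim as \cite[Theorem 4.19]{Fcrystals}, and you correctly recognize this at the end of your proposal. Your reconstruction of the external argument --- classification of the lattice as an integral Kisin descent datum via Theorem \ref{equivalencedescentrep}, identification of the \'etale realization with $\mathcal{M}(\Lambda)$, and extraction of the connection $\nabla_{\GM}$ from the stratification $f$ over $(\GS, \GST, \GS^{(2)})$ after reducing mod $u$, Frobenius-twisting, and inverting $p$ --- matches the structure of the proof in the cited reference, and you rightly identify the verification of rational $S$-Griffiths transversality via comparison with $\Dcrisdual(V)$ as the technical core.
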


If $\GM, \mathcal{M},$ and $\Lambda$ are as above, we call $\GM$ a \textit{Kisin module associated to} $\Lambda$. On ocassion we will want to forget the $\nabla$ structure on the Kisin module.

\begin{remark}
\label{freeremark}
In the setting of our base ring $R$, \cite[Remark 4.23]{Fcrystals} shows that the Kisin module associated to a lattice in a crystalline representation is projective. As $R$ is a local ring, the Kisin module is thus a free $\GS$-module. Furthermore, in the setting of a small base ring $T$, the Kisin module associated to a lattice in a crystalline representation is projective (but not necessarily free).
\end{remark}

\end{subsection}

\begin{subsection}{Base Change of the Kisin Module}
\label{BaseChangeSection}
Here we will again work over a general base ring $\mathcal{R}$ to state the base change theorems in full generality. \newline 

We recall the following standard fact from commutative algebra:

\begin{lemma}\cite[Lemma 3.1]{Fcrystals}
\label{flatlemma}
Let $A$ be a ring, $Q$ a flat $A$-module, and $N_1, N_2$ submodules of an $A$-module $N$. Then as submodules of $Q \otimes_A N$, we have $$Q \otimes_A (N_1 \cap N_2) = (Q \otimes_A N_1 ) \cap (Q \otimes_A N_2).$$
\end{lemma}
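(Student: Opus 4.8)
The plan is to realize $N_1 \cap N_2$ as the kernel of an explicit $A$-linear map assembled from $N_1$, $N_2$, and $N$, and then to use that tensoring with a flat module is exact (in particular preserves kernels and injections). Concretely, I would begin with the sequence
$$0 \longrightarrow N_1 \cap N_2 \xrightarrow{\ \Delta\ } N_1 \oplus N_2 \xrightarrow{\ \delta\ } N,$$
where $\Delta(x) = (x,x)$ and $\delta(a,b) = a - b$ (using the inclusions $N_i \hookrightarrow N$). This sequence is exact: $\Delta$ is visibly injective, and a pair $(a,b) \in N_1 \oplus N_2$ lies in $\ker \delta$ exactly when $a = b$ as elements of $N$, i.e. when $a = b \in N_1 \cap N_2$, which is precisely the image of $\Delta$.

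Next I would apply $Q \otimes_A (-)$ to this sequence. Since $Q$ is flat, tensoring is exact, so
$$0 \longrightarrow Q \otimes_A (N_1 \cap N_2) \longrightarrow (Q \otimes_A N_1) \oplus (Q \otimes_A N_2) \longrightarrow Q \otimes_A N$$
is again exact; moreover, flatness makes each canonical map $Q \otimes_A N_i \to Q \otimes_A N$ injective, so we may identify $Q \otimes_A N_i$ with its image in $Q \otimes_A N$ — this is exactly the identification implicit in the statement of the lemma — and likewise identify $Q \otimes_A (N_1 \cap N_2)$ with a submodule of $Q \otimes_A N$.

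Finally, I would read off the kernel: $Q \otimes_A (N_1 \cap N_2)$ is the kernel of the difference map $(Q \otimes_A N_1) \oplus (Q \otimes_A N_2) \to Q \otimes_A N$, which consists of the pairs $(a,b)$ whose images in $Q \otimes_A N$ coincide; under the injectivity identifications this set is precisely $(Q \otimes_A N_1) \cap (Q \otimes_A N_2)$ as submodules of $Q \otimes_A N$, yielding the claimed equality. I do not expect any genuine obstacle here: the only point needing care is verifying that the several identifications of tensor products as submodules of $Q \otimes_A N$ are mutually compatible and that the difference map respects them, all of which is immediate from the flatness of $Q$.
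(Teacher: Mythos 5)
Your argument is correct: realizing $N_1 \cap N_2$ as the kernel of the difference map $N_1 \oplus N_2 \to N$ and applying flatness of $Q$ (both to preserve the kernel and to identify each $Q \otimes_A N_i$ with its image in $Q \otimes_A N$) is the standard proof of this fact. The paper itself gives no proof, citing the lemma from the reference as a standard result, so there is nothing to compare against beyond confirming that your route is the expected one.
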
 

\begin{subsubsection}{Shilov point base change of the Kisin module}

We will now recall the base change properties of the Kisin module. Let $\mathcal{R}'$ be another another general base ring for which there exists a map $\mathcal{R} \rightarrow \mathcal{R}'$ compatible with Frobenius which makes $\mathcal{R'}$ into an $\mathcal{R}$-module.

\begin{theorem}
\label{Kisinbasechange}
Let $(\GM, \varphi_{\GM})$ be the module and $\varphi_{\GM}$ data of the Kisin module associated to the lattice $\Lambda$ inside the crystalline representation $V$ as in Theorem \ref{quasiKisinTheorem} with $\nabla$ structure forgotten. Let $(\GM', \varphi_{\GM'})$ be the module and $\varphi_{\GM'}$ data of the Kisin module associated to $\Lambda\vert_{G_{\mathcal{R}'}}$ with $\nabla$ structure forgotten, and assume $\GM$ and $\GM'$ are finite free. Then $\GM \otimes_{\mathcal{R}} \mathcal{R}'$ gives the module and $\varphi_{\GM'}$ data of the Kisin module associated to $\Lambda\vert_{G_{\mathcal{R}'}}$, i.e. $\GM \otimes_{\mathcal{R}} \mathcal{R'} \cong \GM'$. 

\begin{proof}
Let $\mathcal{M}$ denote the \'etale $\varphi$-module associated to $\Lambda$ as in Theorem \ref{quasiKisinTheorem}.  Let $\GM'$ denote the Kisin module associated to $\Lambda\vert_{G_{\mathcal{R}'}}$ and $\mathcal{M}'$ its associated \'etale $\varphi$-module. We have injections $\GM \hookrightarrow \mathcal{M}$ and $\GM' \hookrightarrow \mathcal{M}'.$ By Proposition \ref{latticeetalephi}, we know that $\mathcal{M}' \cong \mathcal{O}_{\mathcal{E}, \mathcal{R}'} \otimes_{\mathcal{O}_{\mathcal{E}, \mathcal{R}}} \mathcal{M}$ and so we also have an injection $$\GM \otimes_{\GS_\mathcal{R}} \GS_\mathcal{R'} \hookrightarrow \mathcal{M}'.$$

Then $\GM \otimes_{\GS_\mathcal{R}} \GS_\mathcal{R'}$ and $\GM'$ are both the module data of Kisin modules lying inside of $\mathcal{M'}$. We claim there is unique such module data. By  \cite[Lemma 4.18]{Fcrystals} and Proposition \ref{latticeetalephi} there is a unique Kisin module $\GM_L$ associated to the \'etale $\varphi_{\mathcal{O}_{\mathcal{E}, L}}$-module $\mathcal{M}_L =\mathcal{M}' \otimes_{\mathcal{O}_{\mathcal{E}, \mathcal{R}'}} \mathcal{O}_{\mathcal{E}, \mathcal{R}_L'}$ and thus $$\GM \otimes_{\GS_\mathcal{R}} \GS_\mathcal{R'} \otimes_{\GS_{\mathcal{R}_L'}} \GS_{\mathcal{R}'_L}= \GM'\otimes_{\GS_{\mathcal{R}'}} \GS_{\mathcal{R}_L'}$$ as submodules of $\mathcal{M}_L.$ We also know that $$\GM \otimes_{\GS_\mathcal{R}} \GS_\mathcal{R'} \otimes_{\GS_{\mathcal{R'}}} \mathcal{O}_{\mathcal{E}, \mathcal{R}'} \cong \mathcal{M}' \cong \GM' \otimes_{\GS_{\mathcal{R}'}} \mathcal{O}_{\mathcal{E}, \mathcal{R}'}.$$ Thus we have $\GM \otimes_{\GS_{\mathcal{R}}} \GS_{\mathcal{R}'}$ contained in the intersection of $\GM' \otimes_{\GS_{\mathcal{R}'}} \GS_L'$ and $\GM' \otimes_{\GS_{\mathcal{R}'}} \mathcal{O}_{\mathcal{E}, \mathcal{R}'}$. By Lemma \ref{flatlemma} and the fact that $\GS_{\mathcal{R}_L'} \cap \mathcal{O}_{\mathcal{E}, \mathcal{R}'} = \GS_{\mathcal{R}'}$, this intersection is $\GM'$, concluding the proof as $\GM'$ and $\GM$ have the same rank and our maps are all compatible with the $\varphi$ structures. \newline
\end{proof}
\end{theorem}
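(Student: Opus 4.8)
The plan is to prove this by exhibiting both $\GM \otimes_{\GS_{\mathcal{R}}} \GS_{\mathcal{R}'}$ and $\GM'$ as Kisin submodules of the same \'etale $\varphi$-module $\mathcal{M}'$, and then invoking a uniqueness statement. First I would recall that $\GM$ sits inside its associated \'etale $\varphi$-module $\mathcal{M}$ via the inclusion of Theorem \ref{quasiKisinTheorem}, and that by the base-change compatibility of Proposition \ref{latticeetalephi} we have $\mathcal{M}' \cong \mathcal{O}_{\mathcal{E}, \mathcal{R}'} \otimes_{\etalering} \mathcal{M}$, where $\mathcal{M}'$ is the \'etale $\varphi$-module attached to $\Lambda\vert_{G_{\mathcal{R}'}}$. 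Tensoring the inclusion $\GM \hookrightarrow \mathcal{M}$ up along $\GS_{\mathcal{R}} \to \GS_{\mathcal{R}'}$ (using flatness, so the map stays injective) produces an inclusion $\GM \otimes_{\GS_{\mathcal{R}}} \GS_{\mathcal{R}'} \hookrightarrow \mathcal{M}'$, which one checks is $\varphi$-equivariant and of $E$-height $\le r$ since these properties are preserved under base change; the rank and freeness match $\GM'$ by hypothesis.

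Next, both $\GM \otimes_{\GS_{\mathcal{R}}} \GS_{\mathcal{R}'}$ and $\GM'$ are therefore module-data of Kisin modules inside $\mathcal{M}'$ of the same rank, and I would argue that such data is unique. The mechanism is the further base change to the Shilov point $\mathcal{R}'_L$ (a complete DVR with imperfect residue field), where by \cite[Lemma 4.18]{Fcrystals} together with Proposition \ref{latticeetalephi} there is a \emph{unique} Kisin submodule $\GM_L$ of $\mathcal{M}_L \coloneqq \mathcal{M}' \otimes_{\mathcal{O}_{\mathcal{E}, \mathcal{R}'}} \mathcal{O}_{\mathcal{E}, \mathcal{R}'_L}$ of the given height. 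Hence after base change to $\GS_{\mathcal{R}'_L}$ the two candidates agree:
\[
\GM \otimes_{\GS_{\mathcal{R}}} \GS_{\mathcal{R}'} \otimes_{\GS_{\mathcal{R}'}} \GS_{\mathcal{R}'_L} \;=\; \GM' \otimes_{\GS_{\mathcal{R}'}} \GS_{\mathcal{R}'_L}
\]
as submodules of $\mathcal{M}_L$. On the other hand, tensoring either candidate up to $\mathcal{O}_{\mathcal{E},\mathcal{R}'}$ recovers $\mathcal{M}'$ itself, so both contain $\GM'$ and are contained in $\GM' \otimes_{\GS_{\mathcal{R}'}} \mathcal{O}_{\mathcal{E},\mathcal{R}'} = \mathcal{M}'$. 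Now view everything inside $\mathcal{M}_L$ (or an appropriate ambient module) and use Lemma \ref{flatlemma} with the flat $\GS_{\mathcal{R}'}$-algebra structure, together with the key computation $\GS_{\mathcal{R}'_L} \cap \mathcal{O}_{\mathcal{E},\mathcal{R}'} = \GS_{\mathcal{R}'}$ (the $u$-adic and $p$-adic/localization conditions cutting each other down to $\GS_{\mathcal{R}'}$), to conclude that the intersection of the two overgroups of $\GM'$ is exactly $\GM'$. Since $\GM \otimes_{\GS_{\mathcal{R}}} \GS_{\mathcal{R}'}$ lies in that intersection and has the same rank as $\GM'$, and all maps are $\varphi$-compatible, we get $\GM \otimes_{\GS_{\mathcal{R}}} \GS_{\mathcal{R}'} \cong \GM'$ as asserted.

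The main obstacle I anticipate is the uniqueness input: making precise that a finite-height, saturated, finite-free $\GS_{\mathcal{R}'}$-submodule $\GM''$ of $\mathcal{M}'$ with $\mathcal{O}_{\mathcal{E},\mathcal{R}'} \otimes_{\GS_{\mathcal{R}'}} \GM'' = \mathcal{M}'$ is pinned down by its image in $\mathcal{M}_L$, which is where \cite[Lemma 4.18]{Fcrystals} is doing the real work and where one must be careful that the $\nabla$-structure has genuinely been forgotten so that the statement is purely about module-plus-Frobenius data. A secondary point requiring care is verifying $\GS_{\mathcal{R}'_L} \cap \mathcal{O}_{\mathcal{E},\mathcal{R}'} = \GS_{\mathcal{R}'}$ inside a common ring, and that the relevant modules are indeed flat so that Lemma \ref{flatlemma} applies to intersections of submodules; both are routine once the ambient ring containing all localizations is fixed, but they are the load-bearing commutative-algebra facts. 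Everything else — $\varphi$-equivariance, preservation of $E$-height, matching of ranks — is formal from the definitions in Definition \ref{dddef} and the base-change statement of Proposition \ref{latticeetalephi}.
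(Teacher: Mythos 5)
Your proposal is correct and follows essentially the same route as the paper's proof: embed both $\GM \otimes_{\GS_{\mathcal{R}}} \GS_{\mathcal{R}'}$ and $\GM'$ into $\mathcal{M}'$, use the uniqueness of the Kisin module over the Shilov point $\mathcal{R}'_L$ from \cite[Lemma 4.18]{Fcrystals} together with Proposition \ref{latticeetalephi}, and conclude via Lemma \ref{flatlemma} and the intersection $\GS_{\mathcal{R}'_L} \cap \mathcal{O}_{\mathcal{E},\mathcal{R}'} = \GS_{\mathcal{R}'}$, finishing with the rank and $\varphi$-compatibility observation. The points you flag as requiring care (the uniqueness input and the load-bearing intersection of rings) are exactly the ones the paper leans on.
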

\end{subsubsection}
Working in the setting of $R$, set $\GS_0 \coloneqq W(k)\llbracket u\rrbracket$ and let $S_0$ be the $p$-adically completed divided power envelope  of $\GS_0$ with respect to $E(u)$. We have induced maps $\GS_R \to \GS_0$ and $\mathcal{O}_{\mathcal{E}, R} \to \mathcal{O}_{\mathcal{E}, W(k)}$ which confirm that if $\mathcal{M}$ is an \'etale $(\mathcal\varphi_{\mathcal{O}_{\mathcal{E}, R}}, \mathcal{O}_{\mathcal{E}, R})$-module corresponding to a $\mathbb{Z}_p$-stable lattice inside a crystalline representation of $G_R$ then $\mathcal{M}_0 \coloneqq \mathcal{M} \otimes \mathcal{O}_{\mathcal{E}, W(k)}$ is an \'etale $(\mathcal\varphi_{\mathcal{O}_{\mathcal{E}, W(k)}}, \mathcal{O}_{\mathcal{E}, W(k)})$-module corresponding to a $\mathbb{Z}_p$-stable lattice inside a crystalline representation of $G_{K}.$ Then by \cite[Lemma 2.1.15]{Kisin} or as a corollary of Theorem \ref{Kisinbasechange}, we have \newline
\begin{corollary}
\label{quasiKisinclosed}
If $\GM$ is a Kisin module associated to a $\mathbb{Z}_p$-stable lattice in a crystalline representation of $G_R$, then $\GM \otimes_{\GS} \GS_0$ is, in the classical setting of \cite{Kisin}, a Kisin module associated to a $\mathbb{Z}_p$ stable lattice in a crystalline representation of $G_{K}$.
\end{corollary}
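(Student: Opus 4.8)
The plan is to deduce this directly from Theorem \ref{Kisinbasechange} by taking the general base ring $\mathcal{R}$ to be $R$ and the second general base ring $\mathcal{R}'$ to be $W(k)$, using the projection $\overline{b}\colon R \to W(k)$ sending each $t_i \mapsto 0$ as the required Frobenius-compatible map. First I would check that $\overline{b}$ is indeed Frobenius-compatible: with the convention $\varphi_R(t_i) = (t_i+1)^p - 1$ we have $\overline{b}(\varphi_R(t_i)) = (0+1)^p - 1 = 0 = \varphi_{W(k)}(\overline{b}(t_i))$, so the map intertwines the Frobenii, and it manifestly induces the usual Frobenius on $W(k)$; moreover $W(k)$ is trivially a general base ring (it is $\mathcal{O}_{K^{tr}}$ with $K^{tr} = K$ and $d = 0$). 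Next I would note that $\Lambda|_{G_{W(k)}}$ makes sense: the map $R \to W(k)$ induces $G_{W(k)} = G_K \to G_R$ (via the discussion preceding the corollary, the base change on \'etale $\varphi$-modules corresponds to restriction of the Galois representation), and $\Lambda|_{G_K}$ is a $\mathbb{Z}_p$-lattice in a crystalline representation of $G_K$ with Hodge-Tate weights in $[0,r]$ — this is exactly the content of the paragraph immediately preceding the corollary, which verifies that $\mathcal{M}_0 = \mathcal{M} \otimes_{\mathcal{O}_{\mathcal{E},R}} \mathcal{O}_{\mathcal{E},W(k)}$ corresponds to such a lattice.

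With these identifications in place, I would invoke Theorem \ref{Kisinbasechange} with $\mathcal{R} = R$, $\mathcal{R}' = W(k)$: it gives $\GM \otimes_R W(k) \cong \GM'$, where $\GM'$ is the Kisin module data associated to $\Lambda|_{G_K}$. Since $\GS_R = R\llbracket u\rrbracket$, the base change $\GM \otimes_{\GS_R} \GS_0$ (with $\GS_0 = W(k)\llbracket u\rrbracket$) is the same as $\GM \otimes_R W(k)$ as $\varphi$-modules over $\GS_0$, so this is precisely the statement that $\GM \otimes_{\GS} \GS_0$ is the Kisin module associated to $\Lambda|_{G_K}$. By Remark \ref{freeremark}, the Kisin module $\GM$ over $R$ is free, and $\GM'$ over $W(k)$ is free as well (the classical Kisin module of a lattice in a crystalline representation of $G_K$ is finite free over $W(k)\llbracket u\rrbracket$), so the finite-freeness hypotheses of Theorem \ref{Kisinbasechange} are satisfied.

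The one point requiring a little care — and the likely main obstacle — is confirming that the Kisin module one extracts from $\Lambda|_{G_K}$ via the relative machinery of \cite{Fcrystals} (specializing $\mathcal{R}' = W(k)$) coincides with the classical Kisin module of \cite{Kisin}. This is essentially a compatibility statement between the two constructions in the $d = 0$ case; I would either cite \cite[Lemma 2.1.15]{Kisin} for the uniqueness of the $\varphi$-stable $\GS_0$-lattice of bounded $E$-height inside a given \'etale $\varphi$-module (which forces the two to agree), or appeal to the uniqueness already used in the proof of Theorem \ref{Kisinbasechange} via the Shilov-point descent and Lemma \ref{flatlemma}. Either way, the argument is routine once the base-change framework is set up, so the corollary follows immediately. \qedhere
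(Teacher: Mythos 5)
Your proposal is correct and follows essentially the same route as the paper, which derives the corollary from the preceding paragraph on base change of the associated \'etale $\varphi$-module together with either \cite[Lemma 2.1.15]{Kisin} or Theorem \ref{Kisinbasechange}. Your additional checks (Frobenius-compatibility of $\overline{b}$, freeness via Remark \ref{freeremark}, and the uniqueness argument identifying the relative construction at $d=0$ with the classical Kisin module) are exactly the points the paper leaves implicit.
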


\begin{remark}
    Here we are using that $\GM$ is free by Remark \ref{freeremark} and that $r \in [0, p-2]$, which is why the corollary has only been stated for the base ring $R$.
\end{remark}

\end{subsection}
\begin{subsection}{Breuil Modules}
\label{BreuilModuleSection}
In this section we will work over $\T$ which is either a small base ring or the power series ring $R$. We define $S_\T$ to be the $p$-adically completed divided power envelope of $\GS_\T \coloneqq \T\llbracket u\rrbracket$ with respect to $E(u)$ a choice of minimal polynomial of $p$. The ring $S_\T$ comes equipped with a Frobenius map $\varphi_{S_\T}$ that extends $\varphi_{\GS_\T}.$ It also comes with its divided power filtration that we will denote $\Fil^iS_\T.$ Note that for $1 \leq i \leq p-1$, $\varphi_{S_\T}(\Fil^i S_\T) \subset p^iS$ (just consider the action of $\varphi_{S_\T}$ on $\frac{E(u)^i}{i!}$), so we can set $\varphi_{S_\T,i} = \frac{\varphi_{S_\T}}{p^i}: \Fil^i S_\T \to S_\T.$ The ring $S_\T$ also comes equipped with an integrable connection $\nabla_{S_\T}: S_\T \to S_\T \otimes_{\T} \widehat{\Omega}_\T.$\newline 

\begin{definition}
\label{Breuildefinition}
For $r \leq p-1$ a positive integer, let $\mathrm{Mod}_{{S}, \nabla}^{\mathrm{ff}, \mathrm{r}}(\T)$ be the category whose objects are quadruples $(\SM, \Fil^i(\SM), \nabla_{\SM}, \varphi_{\SM, i})$ where:
\begin{itemize} 
\item $\SM$ is a finite, projective $S_\T$-module.
\item  $\Fil^i(\SM)$ is a decreasing filtration of $\SM$ with $\Fil^0(\SM) = \SM$, $\Fil^{r+1}(\SM) \subset (\Fil^1 S_\T )\SM$, and $(\Fil^iS_\T)\Fil^i\SM \subset \Fil^{i+1}\SM.$
\item $\varphi_{\SM, i}$ are $\varphi_S$-semilinear maps $\varphi_{\SM, i}: \Fil^i(\SM) \to \SM$ so that the composite $\Fil^i(\SM) \to \Fil^{i-1}(\SM) \xrightarrow{\varphi_{\SM, i-1}} \SM$ is $p\varphi_{\SM, i}.$ Also, $\varphi_r(\Fil^r\SM)$ generates $\SM.$
\item $\nabla_{\SM}$ is a topologically quasi-nilpotent integrable connection which satisfies $S$-Griffiths transversality (see Definition \ref{SGriffiths} below) and commutes with each $\varphi_{\SM, i}$ as before. 

\end{itemize}
We refer to objects of this category as \textbf{Breuil modules}.
\end{definition}

Given a Kisin module $\GM$, we define the associated Breuil module to be $$\SM(\GM) \coloneqq \GM \otimes_{\GS_\T, \varphi_{\GS_\T}} S_\T.$$ If such a Kisin module exists for a given Breuil module $\SM$, we will say that ``$\SM$ arises from a Kisin module." In the next few sections, we show how the full data of a Breuil module can be constructed on $\SM \otimes_{\GS_\T, \varphi_{\GS_\T}} S_\T.$ 

\begin{subsubsection}{The filtration from the Kisin module}
\begin{definition}
\label{BreuilFilDef} Let $\SM$ be a Breuil module that arises from a Kisin module $\GM$ which arises from a crystalline representation. We define a decreasing filtration on $\mathscr{M}[p^{-1}]$ via:$$\Fil^i(\SM[p^{-1}]) \coloneqq \{m \in \SM[p^{-1}]~ | ~(1 \otimes \varphi_{\GM})(m) \in \Fil^i(S_\T[p^{-1}]) \otimes_{\GS_\T} \GM\}$$ and a filtration on $\SM$ via $$\Fil^i(\SM) \coloneqq \SM \cap \Fil^i(\SM[p^{-1}]).$$ \end{definition} We have the following lemma which shows that the filtration structure descends integrally:

\begin{lemma}
\label{FilinS}
Let $\SM$ be a Breuil module that arises from a Kisin module $\GM$ and let $0 \leq i < p-1$. We can express the filtration on $\SM$ as in Definition \ref{BreuilFilDef} as $$\Fil^i(\SM) = \{m \in \SM | (1 \otimes \varphi_{\GM})(m) \in \Fil^i(S_\T) \otimes_{\GS} \GM\}.$$ 
\end{lemma}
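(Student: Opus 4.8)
The plan is to show the two descriptions of $\Fil^i(\SM)$ agree by proving that an element $m \in \SM$ with $(1 \otimes \varphi_{\GM})(m) \in \Fil^i(S_\T[p^{-1}]) \otimes_{\GS_\T} \GM$ automatically has $(1 \otimes \varphi_{\GM})(m)$ landing in the integral submodule $\Fil^i(S_\T) \otimes_{\GS_\T} \GM$. The reverse containment is immediate since $\Fil^i(S_\T) \subset \Fil^i(S_\T[p^{-1}])$ and $\SM \hookrightarrow \SM[p^{-1}]$ by freeness. So the content is the integrality statement: if $x \in \GM \subset \mathcal{M}$ (equivalently $x \in \varphi^*$-stuff) satisfies $x \in \Fil^i(S_\T[p^{-1}]) \otimes \GM$ and also $x \in S_\T \otimes \GM$, then $x \in \Fil^i(S_\T) \otimes \GM$. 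First I would reduce to a statement purely about the ring $S_\T$: choosing an $S_\T$-basis of $\SM$ (using that $\SM$ is finite free over $S_\T$ — projective plus $S_\T$ local, or handle the projective case by localizing), the claim becomes that $\Fil^i(S_\T[p^{-1}]) \cap S_\T = \Fil^i(S_\T)$ inside $S_\T[p^{-1}]$, at least after we know $(1\otimes\varphi_{\GM})(m)$ has coordinates in $S_\T$. But $(1 \otimes \varphi_{\GM})$ maps $\SM = \GM \otimes_{\varphi} S_\T$ into $S_\T \otimes_{\GS_\T} \GM$ on the nose — that is exactly the $E$-height condition (the image of the linearized Frobenius lies in $\GM$, not just $\GM[E^{-1}]$ or $\mathcal{M}$) — so for $m \in \SM$ the element $(1\otimes\varphi_{\GM})(m)$ already has coordinates in $S_\T$ with respect to a $\GS_\T$-basis of $\GM$.

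With that reduction, the heart of the matter is the ring-theoretic fact $\Fil^i(S_\T) = \Fil^i(S_\T[p^{-1}]) \cap S_\T$ for $0 \le i < p-1$, where $\Fil^i(S_\T[p^{-1}])$ denotes $\Fil^i(S_\T)[p^{-1}]$ (or the divided-power filtration extended rationally). I would prove this using the explicit structure of $S_\T$: it is topologically free over $\GS_\T$ (or over $\T$) on the divided powers $\gamma_j(E(u)) = E(u)^j/j!$, and $\Fil^i(S_\T)$ is the closed ideal generated by $\gamma_j(E(u))$ for $j \ge i$ together with $\Fil^i$ coming from $\T$ being trivial here since $\T$ has no filtration — more precisely $\Fil^i S_\T = \sum_{j \ge i} S_\T \gamma_j(E(u)) + (\text{divided powers of } p\text{-part})$, and one checks directly that dividing an element of $\Fil^i(S_\T)$ by a power of $p$ and landing back in $S_\T$ forces it to still lie in $\Fil^i(S_\T)$; this is where the bound $i < p-1$ (equivalently $i \le p-2$) is used, to control the $p$-adic valuations of $j!$ versus the divided-power denominators, exactly as in Breuil's original analysis of $S$ and as in the Fontaine--Laffaille range. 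Alternatively, and more cleanly, I would cite or reprove that $S_\T/\Fil^i(S_\T)$ is $p$-torsion-free for $i \le p-1$, which immediately gives $\Fil^i(S_\T[p^{-1}]) \cap S_\T = \Fil^i(S_\T)$.

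The main obstacle is this last $p$-torsion-freeness (equivalently, saturation) of the divided-power filtration on $S_\T$ in the relative setting, and making sure the tensor-product manipulation $\Fil^i(S_\T) \otimes_{\GS_\T} \GM = (\Fil^i(S_\T[p^{-1}]) \otimes_{\GS_\T}\GM) \cap (S_\T \otimes_{\GS_\T} \GM)$ is legitimate — for this I would invoke Lemma \ref{flatlemma} with $A = \GS_\T$, $Q = \GM$ flat (indeed free) over $\GS_\T$, and $N = S_\T[p^{-1}]$ with the two submodules $\Fil^i(S_\T[p^{-1}])$ and $S_\T$, using $\Fil^i(S_\T[p^{-1}]) \cap S_\T = \Fil^i(S_\T)$. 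So the logical order is: (i) note $\SM \hookrightarrow \SM[p^{-1}]$ and the reverse containment is trivial; (ii) observe $(1\otimes\varphi_{\GM})(\SM) \subseteq S_\T \otimes_{\GS_\T}\GM$ from finite $E$-height; (iii) prove $\Fil^i(S_\T[p^{-1}]) \cap S_\T = \Fil^i(S_\T)$ in the range $i \le p-2$; (iv) tensor up via Lemma \ref{flatlemma} and conclude.
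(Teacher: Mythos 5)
Your proposal is correct and follows essentially the same route as the paper: reduce to the ring-theoretic identity $\Fil^i(S_\T[p^{-1}]) \cap S_\T = \Fil^i(S_\T)$ (which the paper checks by the same term-by-term comparison of divided-power expansions that you sketch), and then transfer it to $\GM$ via Lemma \ref{flatlemma} using flatness of $\GM$. The only cosmetic difference is that you attribute the containment $(1\otimes\varphi_{\GM})(\SM)\subseteq S_\T\otimes_{\GS_\T}\GM$ to the finite $E$-height condition, whereas it already follows from $\varphi_{\GM}(\GM)\subseteq\GM$; this does not affect the argument.
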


\begin{proof}
Observe that $\Fil^i(\SM) \subset \Fil^i(\SM[p^{-1}]),$ so we can realize an element of $\Fil^i(\SM)$ as an element $m \in \SM$ such that $(1\otimes \varphi_{\GM})(m)$ is contained in $$(\Fil^i(S[p^{-1}]) \otimes_{\GS_\T} \GM) \cap (S_\T \otimes_{\GS_\T} \GM) = (\Fil^i(S_\T[p^{-1}]) \cap S) \otimes_{\GS} \GM$$ where we are relying on Lemma \ref{flatlemma} (note $\GM$ is projective and hence flat). By comparing elements term-by-term we can show $\Fil^i(S_\T[p^{-1}]) \cap S_\T = \Fil^iS_\T$, as needed. Specifically, an element in $S_\T[p^{-1}]$ can be written as $$x = \sum_{k=0}^\infty \frac{E(u)^k}{k!} \sum_{j=0}^{\mathrm{deg}(E)-1} a_{kj} u^j$$ with $a_{kj} \in \T[p^{-1}].$ For this element to be in $\Fil^{i}(S_\T[p^{-1}])$ we must have $a_{kj} = 0$ whenever $0 \leq k < i $. But $x \in S_\T$ with each $a_{kj} = 0$ whenever $ 0 \leq k < i$, so $x \in \Fil^i(S_\T)$, as well.
\end{proof}
\end{subsubsection}

\begin{subsubsection}{The $\varphi_i$ structure} 

\begin{definition} Let $\SM$ be a Breuil module that arises from a Kisin module $\GM$. We can also define a $\varphi_{\SM, i}$ structure by setting $\varphi_{\SM, i}$ to be the composition: $$\varphi_{\SM, i} : \Fil^i(\SM) \xrightarrow{1 \otimes \varphi_{\GM}} \Fil^i(S_\T) \otimes_{\GS} \GM \xrightarrow{\varphi_{S_\T, i} \otimes 1} \SM.$$ 
\end{definition} 
Note the first arrow is well-defined by the definition of $\Fil^i(\SM).$ It is then clear that the composite $\Fil^i(\SM) \to \Fil^{i-1}(\SM) \xrightarrow{\varphi_{\SM, i-1}} \SM$ is $p\varphi_{\SM, i}$ because $\varphi_{S_\T,i} = \frac{\varphi_{S_\T}}{p^i}.$ \newline 
\end{subsubsection}
\begin{subsubsection}{The integrable connection} 
Here we discuss where the connection $$\nabla_{\SM[p^{-1}]}: \SM[p^{-1}] \to \SM[p^{-1}] \otimes_R \widehat{\Omega}_R$$ arises from, and then we will show it descends to the integral setting. \newline 

Let $\SM$ be a Breuil module that arises from a Kisin module with descent datum $\GM.$ By Theorem \ref{equivalencedescentrep} there exists $\Lambda$, a lattice inside a crystalline representation $V \coloneqq \Lambda \otimes_{\mathbb{Z}_p} \mathbb{Q}_p$ corresponding to $\GM$ under the equivalence of categories. Then we consider $D_{\cris}^\vee(V)$ which is a finite projective $\T[p^{-1}]$-module equipped with an integrable connection $\nabla_{D}$, a $\varphi$, and filtration structure which we will denote $\Fil^i(\Dcrisdual(V))$. Set $M = \T \otimes_{\varphi, \T} \GM/u\GM$. Then by \cite[Theorem 4.28]{Fcrystals} there is a $\varphi$-compatible isomorphism $M[p^{-1}] \to \Dcrisdual(V)$. By \cite[Theorem 4.2]{Fcrystals} we can identify $\mathscr{D} \coloneqq S_\T[p^{-1}] \otimes_{\T} M$ with $\SM[p^{-1}]$ as modules. Also see Lemma \ref{sectionstable}.\newline 

The connection $\nabla_D$ can be further extended to an integrable connection on $\mathscr{D} \coloneqq \Dcrisdual \otimes_{\T[p^{-1}]} S_\T[p^{-1}]$ defined as $\nabla_{\SM[p^{-1}]} \coloneqq \nabla_D \otimes 1 + 1 \otimes \nabla_{S_\T[p^{-1}]}$, and $\mathscr{D}$ comes equipped with a filtration defined inductively by setting $\Fil^0\mathscr{D} = \mathscr{D}$ and: $$\Fil^i\mathscr{D} \coloneqq \{x \in \mathscr{D} | N_u(x) \in \Fil^{i-1}\mathscr{D}, q(x) \in \Fil^i(\Dcrisdual(V))\},$$ where $N_u: \mathscr{D} \to \mathscr{D}$ is the $\T$-linear derivation $N_{u, S_\T} \otimes 1$ and $q(x)$ is the projection $\SM \to M$ as in \cite[Theorem 4.2]{Fcrystals}. By \cite[Lemma 4.31]{Fcrystals}, the filtration $\Fil^i(\SM[p^{-1}])$ defined in Definition \ref{BreuilFilDef} using the $\varphi$-structure on $\GM$ and the filtration $\Fil^i(\mathscr{D})$ defined above agree and, consequently, $\nabla_{\SM[p^{-1}]}$ satisfies rational $S_\T$-Griffiths transversality as we define below:

\begin{definition}
\label{SGriffiths}
 We say the connection $\nabla_{\SM[p^{-1}]}$ satisfies \textbf{rational} $S_\T$\textbf{-Griffiths transversality} if for every $i$ $$\partial_u(\Fil^{i+1}(\SM[p^{-1}]) \subset \Fil^i(\SM[p^{-1}])$$ and $$\nabla_{\SM[p^{-1}]}(\Fil^{i+1}(\SM[p^{-1}]) \subset \Fil^i(\SM[p^{-1}]) \otimes_R \widehat{\Omega}_R$$ where $\partial_u: \SM[p^{-1}] \to \SM[p^{-1}]$ is the derivation given by $\partial_{u, S_\T[p^{-1}]} \otimes 1.$ \newline

If $\nabla_{\SM[p^{-1}]}$ restricts to a connection $\nabla_{\SM} : \SM \to \SM \otimes_\T \widehat{\Omega}_{\T}$ where the same condition on the filtration holds with $\Fil^{\bullet} (\SM[p^{-1}])$ replaced by $\Fil^{\bullet}( \SM)$, we say that $\nabla_{\SM}$ satisfies \textbf{integral} $S_\T$\textbf{-Griffiths transversality}.
\end{definition}

We now show this connection descends to the integral setting. 

\begin{proposition} Let $\SM$ be a Breuil module that arises from a 
Kisin module $\ \GM$. Then $\SM$ is stable under $\nabla_{\SM}$ and satisfies integral $S$-Griffiths transversality. 
\label{BreuilNablaStable}
\begin{proof}
We consider the base change $\SM_L \coloneqq \SM \otimes_{S} S_{\T,L}$ to the Shilov point with imperfect residue field (defined in Section \ref{ShilovPointBaseChange}. Let $V$ be the crystalline representation corresponding to the \'etale $(\varphi_{\mathcal{O}_{\mathcal{E}}}, \mathcal{O}_{\mathcal{E}})$-module $\GM \otimes_{\GS_\T} \etalering$. By \cite[Theorem 4.20]{https://doi.org/10.48550/arxiv.2110.06001} there is a Breuil module $\SM_L'$ associated to $V\vert_{G_{L}}$. \newline 

By \cite[p. 10]{https://doi.org/10.48550/arxiv.2110.06001} there exists a Kisin module $\GM_L'$ such that $\SM_L' = \GM_L' \otimes_{\varphi_{\GS_{\T,L}}, \GS_{\T,L}} S_{\T,L}.$ By its uniqueness and Theorem \ref{Kisinbasechange} we must have $\GM_L' \cong \GM \otimes_{\GS_\T} \GS_{\T,L}$ and thus $\SM_L' \cong \GM \otimes_{\varphi, \GS_\T} S_{\T,L} \cong \SM_L.$ \newline

Thus $\SM_L$ is stable under its integrable connection $\nabla_{\SM_L} = \nabla_{\SM} \otimes 1 + 1 \otimes \nabla_{S_L}.$ Thus the image of $\SM$ under $\nabla_{\SM[p^{-1}]}$ is contained in $\SM[p^{-1}] \cap \SM_L.$ By Lemma \ref{flatlemma}:
\begin{align*}
\SM[p^{-1}] \cap \SM_L &= (\SM \otimes_S S_\T[p^{-1}]) \cap (\SM \otimes_{S_\T} S_{\T,L}) \\
&= \SM \otimes_{S_\T} (S_\T[p^{-1}] \cap S_{\T,L}) \\
&= \SM
\end{align*}
where $S_\T[p^{-1}] \cap S_{\T,L} = S_\T$ can be checked term-by-term as in the proof of Lemma \ref{sectionstable} below. \newline 

For integral $S$-Griffiths Transversality, we know $\nabla_{\SM}(\Fil^{i+1}(\SM)) \subset \Fil^i \SM \otimes_R \widehat{\Omega}_R$ as it satisfies rational $S$-Griffiths transversality and that $\nabla_{\SM}(\Fil^{i+1}(\SM)) \subset \SM \otimes_R \widehat{\Omega}_R$ because $\SM$ is stable under $\nabla_{\SM}$. We conclude that $\nabla_{\SM}$ satisfies integral $S$-Griffiths Transversality from Definition \ref{BreuilFilDef}.
\end{proof}
\end{proposition}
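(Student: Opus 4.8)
The plan is to prove the two assertions — that $\SM$ is stable under $\nabla_{\SM[p^{-1}]}$, and that $\nabla_\SM$ satisfies integral $S$-Griffiths transversality — by base-changing to the Shilov point $S_{\T,L}$ with imperfect residue field, where the integral theory is already available from \cite{https://doi.org/10.48550/arxiv.2110.06001}, and then descending the conclusions via the flatness intersection lemma (Lemma \ref{flatlemma}). The overall philosophy is the same one used repeatedly above: a structure on an $S_\T$-module that is known both rationally (after inverting $p$) and after the faithfully flat base change to $S_{\T,L}$ can be recovered integrally because $S_\T = S_\T[p^{-1}] \cap S_{\T,L}$ inside $S_{\T,L}[p^{-1}]$.

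The key steps, in order, would be: (1) Form $\SM_L \coloneqq \SM \otimes_{S_\T} S_{\T,L}$ and identify it with the Breuil module attached to $V|_{G_L}$, where $V$ is the crystalline representation corresponding to the étale $(\varphi_{\etalering},\etalering)$-module $\GM \otimes_{\GS_\T} \etalering$; this identification goes through the unique Kisin module $\GM_L'$ over $\GS_{\T,L}$ underlying the Breuil module $\SM_L'$ of \cite{https://doi.org/10.48550/arxiv.2110.06001} together with the base-change compatibility of Kisin modules (Theorem \ref{Kisinbasechange}), which forces $\GM_L' \cong \GM \otimes_{\GS_\T} \GS_{\T,L}$ and hence $\SM_L \cong \SM_L'$. (2) Since $\SM_L'$ is a genuine Breuil module over the classical-type base $S_{\T,L}$, it carries an integrable connection $\nabla_{\SM_L}$ compatible with the given one after inverting $p$, i.e. $\nabla_{\SM_L} = \nabla_\SM \otimes 1 + 1 \otimes \nabla_{S_{\T,L}}$; so the image $\nabla_{\SM[p^{-1}]}(\SM)$ lands in $\SM[p^{-1}] \cap \SM_L$ inside $\SM_L[p^{-1}]$. (3) Apply Lemma \ref{flatlemma} with the flat $S_\T$-algebras $S_\T[p^{-1}]$ and $S_{\T,L}$ to get $\SM[p^{-1}] \cap \SM_L = \SM \otimes_{S_\T} (S_\T[p^{-1}] \cap S_{\T,L}) = \SM$, using the term-by-term check that $S_\T[p^{-1}] \cap S_{\T,L} = S_\T$ (exactly as in the proof of Lemma \ref{sectionstable}). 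This gives stability. (4) For integral $S$-Griffiths transversality: rational $S$-Griffiths transversality (established just above via \cite[Lemma 4.31]{Fcrystals}) gives $\nabla_\SM(\Fil^{i+1}(\SM)) \subset \Fil^i(\SM[p^{-1}]) \otimes_\T \widehat{\Omega}_\T$, while step (3) gives $\nabla_\SM(\Fil^{i+1}(\SM)) \subset \SM \otimes_\T \widehat{\Omega}_\T$; intersecting these and invoking the integral description of $\Fil^i(\SM)$ from Definition \ref{BreuilFilDef} (namely $\Fil^i(\SM) = \SM \cap \Fil^i(\SM[p^{-1}])$, compatibly with tensoring by $\widehat{\Omega}_\T$ which is free) yields $\nabla_\SM(\Fil^{i+1}(\SM)) \subset \Fil^i(\SM) \otimes_\T \widehat{\Omega}_\T$, and the $\partial_u$-statement follows identically.

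The main obstacle I anticipate is step (1): correctly matching $\SM_L$ with the Breuil module produced by \cite{https://doi.org/10.48550/arxiv.2110.06001} for $V|_{G_L}$. One must be careful that the Breuil module over $S_{\T,L}$ associated to $V|_{G_L}$ genuinely arises from a Kisin module over $\GS_{\T,L}$ (this is the cited "\cite[p. 10]{https://doi.org/10.48550/arxiv.2110.06001}" point), and then that the uniqueness of the Kisin module inside its étale $\varphi$-module — combined with the Shilov-point base change Theorem \ref{Kisinbasechange} applied to $\mathcal{R}' = \mathcal{R}_L$ — pins down $\GM_L' \cong \GM \otimes_{\GS_\T} \GS_{\T,L}$. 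Once this identification is in hand, the connection on $\SM_L$ is literally the one coming from the classical-type theory and everything else is the flatness-intersection formalism already exercised several times in the paper. The one genuine computation, the term-by-term verification $S_\T[p^{-1}] \cap S_{\T,L} = S_\T$, is routine — an element of $S_\T[p^{-1}]$ written as $\sum_k \tfrac{E(u)^k}{k!}\sum_{j<\deg E} a_{kj}u^j$ lies in $S_{\T,L}$ exactly when all $a_{kj} \in \T$ — and can be dispatched by reference to the analogous argument in Lemma \ref{sectionstable}.
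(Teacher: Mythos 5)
Your proposal is correct and follows essentially the same route as the paper's own proof: base change to the Shilov point $S_{\T,L}$, identify $\SM_L$ with the Breuil module of the cited reference via uniqueness of the underlying Kisin module and Theorem \ref{Kisinbasechange}, descend stability using Lemma \ref{flatlemma} together with the term-by-term identity $S_\T[p^{-1}] \cap S_{\T,L} = S_\T$, and then combine rational $S$-Griffiths transversality with stability to obtain the integral statement. No substantive differences from the paper's argument.
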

\end{subsubsection}

\begin{subsubsection}{The functor from Fontaine-Laffaille modules to Breuil modules}
In the classical setting, it has already been established as the main result of \cite[Theorem 1.1]{Gao} that there is a ``direct" equivalence of categories between Fontaine-Laffaille modules and Breuil modules. We rely on this equivalence when reducing the relative setting to the classical setting. It is also well-known that the same story holds in the classical case as in the above section. There is an equivalence of categories between Kisin modules and lattices inside of crystalline representations. If $\Lambda$ is a lattice associated to Kisin module $\GM$, there is a functor $D_{\cris}$ such that $\Dcris(\Lambda \otimes_{\mathbb{Z}_p} \mathbb{Q}_p) \cong (\GM/u\GM)[p^{-1}]$ where $M \coloneqq \GM/u\GM$ is a Fontaine-Laffaille module in the classical sense of \cite{Fontaine}. \newline

In the relative setting there is an analogous ``obvious" functor from Fontaine-Laffaille modules to Breuil modules. \newline

\begin{definition} \label{classicalfunctor} Let $M \in \mathrm{MF}_{\nabla, [0,r]}^{\mathrm{ff}}$. Then we can define the functor $\SM \coloneqq \SM(M) = M \widehat{\otimes}_\T S_\T$, the $p$-adic completion of $M \otimes_\T S.$ The additional structure is defined in the natural way as follows:
\begin{itemize}
\item $\varphi_\SM \coloneqq \varphi_M \otimes \varphi_{S_\T}$ 
\item $\nabla_\SM \coloneqq \nabla_M \otimes 1 + 1 \otimes \nabla_{S_\T}$
\item $\Fil^r(\SM) \coloneqq \sum_{i=0}^r \Fil^{i}(M) \widehat{\otimes}_{R} \Fil^{r-i}(S_\T)$
\item $\varphi_r \coloneqq \sum_{i=0}^r \varphi_{i, M} \otimes \varphi_{r-i, S_\T}$
\end{itemize}
where we've added an ``$M$" tag to a structure if it arises from that appropriate structure on $M$ and an ``$S$" tag if it arises from the appropriate structure on $S_\T$. Much of our focus will be on trying to reverse this functor: obtaining Fontaine-Laffaille module data from Breuil module data. \newline
\end{definition}

 In the classical setting, this functor is well-known to be an equivalence of categories and Hui Gao established a direct quasi-inverse in \cite[Theorem 1.1]{Gao}\newline

 \begin{remark}
     Our results will imply that this functor $M \mapsto \SM(M)$ is an equivalence of categories in the setting with base ring $\T$, as well. 
 \end{remark}

\end{subsubsection}
\end{subsection}
\begin{subsection}{Functors to Galois Representations}
\begin{subsubsection}{Definition of $\Tcris$}
\label{FLGaloisAction}
Let $M \in \mathrm{MF}^{\mathrm{ff}, [0,r]}_{\nabla}(R).$ Now we recall how the Galois action on $\Acris(R) \otimes_R M$ and hence $$\Tcris(M) \coloneqq (\Fil^r(A_{\mathrm{cris}} \otimes_{R} M))^{\varphi_r=1}$$ is defined. This is discussed in \cite[Section 2.3]{LMP}. Set $\varphi$ on $R$ to extend the natural $\varphi$ on $W(k)$ with $\varphi(t_i) = (1+t_i)^p -1.$ Choose $\widetilde{1+t_i} \in \overline{R}^\flat$ to be a fixed sequence of compatible $p^i$th roots of $1+t_i$ and denote by $[\widetilde{1+t_i}]$ its Teichm\"uller lift. For any $g \in G_R$ we define $\beta_i(g) = g([\widetilde{1+t_i}]) -[\widetilde{1+t_i}] \in \Fil^i\Acris(R)$, and we have for $a \otimes x \in \Acris(R) \otimes_R M$: $$g(a \otimes x) = \sum_I g(a)\gamma_I(\beta(g)) \otimes \nabla (\partial)^I(x)$$ where $\gamma_I(\beta(g))$ is defined as $\frac{\prod_{\ell} \beta_\ell(g)^{i_\ell}}{\prod_\ell i_\ell!}$ and $I = (i_1, \cdots , i_d)$ is a multi-index with $i_k \geq 0$ for all $k$ and $\nabla(\partial)^I = \left[\nabla(\frac{\partial}{\partial t_i})\right]^{i_1} \cdots \left[\nabla(\frac{\partial}{\partial t_d})\right]^{i_d}$. It follows from \cite[Lemma 2.20]{LMP} that this formula gives a well-defined $\AcrisR$-semilinear action of $G_R$ which is compatible with the filtration structure (i.e. $g(\AcrisR \otimes_R \Fil^rM) \subset \Fil^r(\AcrisR \otimes_R M)$) and commutes with the Frobenius structure.  This ensures $\Tcris(M)$ is well-defined and that $\Tcris(M) \in \mathrm{Rep}_{\mathrm{cris}, \mathbb{Z}_p}^{[0,r]}(G_{R})$. \newline

$\Tcris(M)$ can be defined analogously for $M \in \mathrm{MF}^{\mathrm{ff}, [0,r]}_{\nabla}(T),$ but $\varphi(t_i)$ is set to be $t_i^p$ and we use $\widetilde{t_i} \in \overline{T}^\flat$ instead of $\widetilde{1+t_i}$.
\end{subsubsection}
\begin{subsubsection}{The definition of $T_\GS(\GM)$}
\label{KisinGaloisAction}
Write $\GS$ for $\GS_R$. Let $(\GM, \varphi_{\GM}, f)$ be a Kisin module with descent data associated to a crystalline representation. Here we will work over our base ring $R$ and explain how the map $f$ as in Definition \ref{dddef} is used to construct a Galois action on $\GM \otimes_{\GS} \AinfR$ as in \cite[4.3]{DuLiu}. First recall that we have an embedding $R \hookrightarrow \AinfR$ via $t_i \mapsto [\widetilde{t_i+1}]-1$. \newline

Let $f_1: (\GS, (E)) \to (\AinfR, (E))$ be the map determined by $u \mapsto [\pi^\flat]$ which extends the above embedding of $R$ into $\AinfR$. Let $g \in G_R$ and let $f_2: (\GS, (E)) \to (\AinfR, (E))$ be the map determined by $u \mapsto g([\pi^\flat])$ and $t_i \mapsto g([\widetilde{t_i+1}]-1).$ Then the universal property described in Section \ref{KisinModulesSection} gives a unique map $f_g: (\GST, (E)) \to (\AinfR, (E))$ and since $f$ satisfies the cocycle condition there exists a map $d$ making the following diagram commute: \newline

\adjustbox{scale=1,center}{%
  \begin{tikzcd}[scale=1]   
  \GST \otimes_{p_1, \GS} \GM \arrow{r}{f} \arrow{d}{f_1} &\GST \otimes_{p_2, \GS} \GM \arrow{d}{f_g}\\
  \AinfR \otimes_{\GS} \GM \arrow{r}{d_g} & \AinfR \otimes_{g, \GS} \GM
  \end{tikzcd}} \newline

\noindent and from this we have a $G_R$ action on $\AinfR \otimes_{\GS} \GM.$ Alsom the $G_R$ action commutes with the $\varphi$-structure. \newline

We will make use of this $G_R$ action on two separate functors. We define $$T_{\GS}(\GM) \coloneqq (\GM \otimes_{\GS} W(\overline{R}^\flat\left[1/u\right])~)^{\varphi=1}$$  and $$T^r_{\GS}(\GM) \coloneqq (\Fil^r\varphi^*\GM \otimes_{\GS} \AinfR)^{\varphi_r=1}$$ where we write $\varphi^*\GM = \GS \otimes_{\varphi, \GS} \GM$ and we set $\Fil^r \varphi^*\GM = \{x \in \varphi^*\GM \vert (1 \otimes \varphi)(x) \in E(u)^r\GM\}$ and $\varphi_r: \Fil^r\varphi^*\GM \to \varphi^*\GM$ defined by $$\varphi_r(x) \coloneqq \frac{\varphi(x)}{\varphi(E(u)^r)}.$$

These functors are related via $$T^r_{\GS}(\GM) \cong T_{\GS}(\GM)(r)$$ as in \cite[Lemma 6.11]{LiLiu}, where $(r)$ denotes the $r$th Tate twist. Their proof only establishes the isomorphism in the classical case, but the proof works mutatis mutandis in the relative case. 
\begin{remark}
\label{dualremark}
We are using the covariant versions of these functors rather than the contravariant versions used in \cite{Fcrystals} (defined below) which are dual to the covariant versions. With this in mind, if we take a lattice $\Lambda \in \mathrm{Rep}_{\mathrm{cris}, \mathbb{Z}_p}^{[0,r]}(G_R)$, we should first consider $\Lambda^\vee(r)\in \mathrm{Rep}_{\mathrm{cris}, \mathbb{Z}_p}^{[0,r]}(G_R)$, the dual with the $r$th Tate twist applied. Then using Theorem \ref{quasiKisinTheorem} as stated for the contravariant versions of the functors, there exists a Kisin module with descent data $(\GM, \varphi_{\GM}, f)$ such that $T_{\GS}^\vee(\GM) = \Lambda^\vee(r)$ Then taking duals and applying the $r$th Tate twist, we obtain $T_{\GS}^r(\GM) = \Lambda$, so we do have a Kisin module with descent data $(\GM, \varphi_{\GM}, f)$ with $T_{\GS}^r(\GM) = \Lambda$ for our covariant version, too.
\end{remark}

\begin{remark}
\label{KisinIsomRem}
    We can define $T_\GS$ and $T_\GS^r$ analogously when working over a small base ring $T$ using the analogous embedding of $T \to \Ainf(T)$ via $t_i \mapsto [\widetilde{t_i}]$. Note that the embedding $T \to \Tatem$ induces an embedding $\Ainf(T) \to \Ainf(\Tatem)$ which gives an isomorphism $\T_{\GS_T}(\GM) \to T_{\GS_{\Tatem}}(\GM \otimes_{T} \Tatem)$ when $\GM$ is a Kisin module over $\GS_T.$
\end{remark}
\end{subsubsection}
\begin{subsubsection}{Equivalence between $T_{\GS}$ and $T_{\etalering}$}
Again let $\GM$ be a Kisin module associated to a crystalline representation. By \cite[Lemma 4.27]{Fcrystals}, the natural $G_{\tilde{R}_{\infty}}$-equivariant map $$ T_{\GS}^\vee(\GM) \coloneqq \Hom_{\GS, \varphi}(\GM, \widehat{\GS}^{\mathrm{ur}}) \to \Hom_{\etalering, \varphi}(\mathcal{M}, \widehat{\etalering}^{\mathrm{ur}}) = T_{\etalering}^\vee(\mathcal{M})$$ is an isomorphism, though we need to make slight adjustments because we are using the covariant versions of these functors as defined above rather than the contravariant versions sometimes used in \cite{Fcrystals}. \newline

\begin{lemma}
\label{etaleKisinGaloisEq}
The natural map $$T_{\etalering}(\mathcal{M}) \to T_{\GS}(\GM)$$ is an isomorphism of $G_{\tilde{R}_{\infty}}$ representations. 
\begin{proof}
Recall $\mathcal{M} = \GM \otimes_{\GS} \etalering.$ 

The embedding $f_1: \GS \to \AinfR = W(\overline{R}^\flat)$ above extends to an embedding $\widehat{\etalering}^{\mathrm{ur}} \to W(\overline{R}^\flat[1/u])$ which gives us a natural embedding $$T_{\etalering}(\mathcal{M}) = (\mathcal{M} \otimes_{\etalering} \widehat{\etalering}^{\mathrm{ur}})^{\varphi=1} = (\GM \otimes_{\GS}  \widehat{\etalering}^{\mathrm{ur}})^{\varphi=1} \hookrightarrow (\GM \otimes_{\GS} W(\overline{R}^\flat[1/u]))^{\varphi=1} = T_{\GS}(\GM).$$ The last embedding is known to be an isomorphism of $G_{\tilde{R}_{\infty}}$ representations as a consequence of \cite[Lemma 2.15]{Fcrystals}, completing the proof. 
\end{proof}
\end{lemma}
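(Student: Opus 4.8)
The plan is to route everything through the \'etale $(\varphi_{\etalering},\etalering)$-module $\mathcal{M}=\GM\otimes_{\GS}\etalering$ and reduce to the comparison between the ``small'' period ring $\widehat{\etalering}^{\mathrm{ur}}$ and the ``large'' period ring $W(\overline{R}^\flat[1/u])$. First I would pin down the natural map. The embedding $f_1\colon\GS\to\AinfR=W(\overline{R}^\flat)$ of Section~\ref{KisinGaloisAction}, which sends $u\mapsto[\pi^\flat]$ and $t_i\mapsto[\widetilde{t_i+1}]-1$, carries $u$ to the unit $[\pi^\flat]$ of $W(\overline{R}^\flat[1/u])$; hence it extends uniquely to a $\varphi$-equivariant ring map $\etalering\to W(\overline{R}^\flat[1/u])$ (localize at $u$, then $p$-adically complete, using that $W(\overline{R}^\flat[1/u])$ is $p$-adically complete and $p$-torsion free), and then, since $W(\overline{R}^\flat[1/u])$ receives every finite \'etale layer of $\etalering$ compatibly, to a $\varphi$-equivariant injection $\widehat{\etalering}^{\mathrm{ur}}\hookrightarrow W(\overline{R}^\flat[1/u])$. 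Tensoring the identity $\mathcal{M}=\GM\otimes_{\GS}\etalering$ along $\etalering\to\widehat{\etalering}^{\mathrm{ur}}\to W(\overline{R}^\flat[1/u])$ and using associativity of $\otimes$ gives $\mathcal{M}\otimes_{\etalering}\widehat{\etalering}^{\mathrm{ur}}=\GM\otimes_{\GS}\widehat{\etalering}^{\mathrm{ur}}$ and $\mathcal{M}\otimes_{\etalering}W(\overline{R}^\flat[1/u])=\GM\otimes_{\GS}W(\overline{R}^\flat[1/u])$; passing to $\varphi=1$ yields the asserted map $T_{\etalering}(\mathcal{M})\to T_{\GS}(\GM)$.

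Next I would observe that all the ring maps above are $G_{\tilde R_\infty}$-equivariant — $[\pi^\flat]$ and the $[\widetilde{t_i+1}]-1$ are fixed in the relevant sense, and the $G_R$-action on $\GM\otimes_{\GS}\AinfR$ recalled in Section~\ref{KisinGaloisAction} is defined precisely to be compatible with $f_1$ — so the map is a morphism of $G_{\tilde R_\infty}$-representations. For injectivity I would use that $\mathcal{M}$ is projective, hence flat, over $\etalering$ (Proposition~\ref{latticeetalephi}, Remark~\ref{freeremark}), so $\mathcal{M}\otimes_{\etalering}(-)$ preserves the injection $\widehat{\etalering}^{\mathrm{ur}}\hookrightarrow W(\overline{R}^\flat[1/u])$, and taking $\varphi$-invariants preserves injectivity. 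The real content is surjectivity, which I would deduce from \cite[Lemma 2.15]{Fcrystals}: that lemma identifies $\widehat{\etalering}^{\mathrm{ur}}$ inside $W(\overline{R}^\flat[1/u])$ as exactly the subring carrying the \'etale $\varphi$-module theory, i.e.\ for every \'etale $(\varphi_{\etalering},\etalering)$-module $\mathcal{N}$ the inclusion induces a bijection $(\mathcal{N}\otimes_{\etalering}\widehat{\etalering}^{\mathrm{ur}})^{\varphi=1}\to(\mathcal{N}\otimes_{\etalering}W(\overline{R}^\flat[1/u]))^{\varphi=1}$ (both functors are exact in $\mathcal{N}$ and agree on the trivial module, where the statement is $W(\overline{R}^\flat[1/u])^{\varphi=1}=\mathbb{Z}_p=(\widehat{\etalering}^{\mathrm{ur}})^{\varphi=1}$, so one concludes by d\'evissage modulo $p$ together with $p$-adic completeness). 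Applying this with $\mathcal{N}=\mathcal{M}$ finishes the proof; alternatively, one can simply dualize the contravariant comparison \cite[Lemma 4.27]{Fcrystals} cited just above, making the slight covariant/contravariant adjustments already flagged there.

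The main obstacle is entirely the comparison of period rings, which is packaged into \cite[Lemma 2.15]{Fcrystals}; once that is granted the argument is formal manipulation of tensor products and $\varphi$-invariants. The one genuinely delicate bookkeeping point is the well-definedness of the extension $\widehat{\etalering}^{\mathrm{ur}}\to W(\overline{R}^\flat[1/u])$ — that $W(\overline{R}^\flat[1/u])$ really does receive all the finite \'etale layers of $\etalering$ in a compatible, $G_{\tilde R_\infty}$-equivariant fashion, which is where the construction of $\widehat{\etalering}^{\mathrm{ur}}$ as (a $p$-adic completion of) a strict henselization enters — and the verification that the resulting isomorphism is $G_{\tilde R_\infty}$-equivariant rather than merely $\mathbb{Z}_p$-linear, which as noted holds because the action on $\GM\otimes_{\GS}\AinfR$ was set up to match $f_1$.
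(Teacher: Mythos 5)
Your proposal is correct and follows essentially the same route as the paper: construct the map by extending $f_1$ to an embedding $\widehat{\etalering}^{\mathrm{ur}} \hookrightarrow W(\overline{R}^\flat[1/u])$, identify $\mathcal{M}\otimes_{\etalering}\widehat{\etalering}^{\mathrm{ur}}$ with $\GM\otimes_{\GS}\widehat{\etalering}^{\mathrm{ur}}$, and invoke \cite[Lemma 2.15]{Fcrystals} for the isomorphism after taking $\varphi$-invariants. The extra detail you supply (equivariance, injectivity via flatness, the d\'evissage sketch) is a faithful expansion of what the paper leaves implicit.
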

\end{subsubsection}
\begin{subsubsection}{The definition of $T_S(\SM)$}
\label{BreuilGaloisActionTwo}
Write $S$ For $S_R.$ Let $\SM$ be a Breuil module. We can define a Galois action on $\Acris(R) \otimes_S \SM$ is defined using $\nabla$ in the same way as the Galois action on a Fontaine-Laffaille module was defined above. First embed $S$ into $\Acris(R)$ where $u$ maps to $[\pi^\flat]$, the Teichmuller lift of a compatible sequence of $p$-power roots of $\pi$. If we let $g \in G_R$ and $a \otimes x \in \AcrisR \otimes_S \SM$ we set $$g(a \otimes x) = \sum g(a) \partial_u^{j_0} \partial_{t_1}^{j_1} \cdots \partial_{t_d}^{j_d}(x) \cdot \gamma_{j_0}(g[\pi^\flat]-[\pi^\flat])\prod_{i=1}^d \gamma_{j_i}(g([t_i^\flat])-[t_i^\flat])~~~~~~~~~~~~~(*)$$ with notation as above in the previous section and the sum over multi-indices $(j_0, \cdots , j_d)$ of nonnegative integers. Similarly to Section \ref{FLGaloisAction}, this gives us an $\Acris(R)$-semilinear action of $G_R$ which preserves the filtration structure ($g(\Fil^r(\AcrisR \otimes_S \SM) \subset \Fil^r(\AcrisR \otimes_S \SM$)) and commutes with Frobenius. It is also clear that if $\SM = M \otimes_R S$, then $$T_S(\SM) \coloneqq \Fil^r(\AcrisR \otimes_S \SM)^{\varphi_r=1} = \Fil^r(\AcrisR \otimes_S (M \otimes_R S))^{\varphi_r=1} = \Tcris(M). $$

We can also view this Galois action on $\Acris(R) \otimes_S \SM$ determined by $\nabla$ as one arising from the descent data. We first recall that there are rings $S^{(1)}$ and $S^{(2)}$ analogous to $\GST$ and $\GS^{(2)}$. The details of the construction can be found in \cite[Example 3.9]{Fcrystals}. We will write $p_1$ and $p_2$ for the maps from $S$ to $S^{(1)}$ analogous to the $p_1$ and $p_2$ defined in Section \ref{KisinGaloisAction} \newline

Given $(\GM, \varphi_{\GM}, f)$ a
Kisin module associated to a lattice in a crystalline representation as in Theorem \ref{quasiKisinTheorem} , \cite[Construction 4.3]{Fcrystals} constructs a descent datum $f_S$ on $\SM[p^{-1}] = S[p^{-1}] \otimes_R M$ as follows: \newline

Let $\partial_u: S[p^{-1}] \otimes_R M \to S[p^{-1}] \otimes_R M$ be the derivation given by $\partial_{u, S} \otimes 1$ and let $\partial_{t_i}$ be the derivation given by $\partial_{t_i, S} \otimes 1 + 1 \otimes \partial_{ti, M}$. Then we can define $f_S: S^{(2)} \otimes_{p_1, S} \SM[p^{-1}] \to S^{(2)} \otimes_{p_2, S} \SM[p^{-1}]$ as $$f_S(x) = \sum_J \partial_u^{j_0}\partial_{t_1}^{j_1} \cdots \partial_{t_d}^{j_d}(x) \gamma_{j_0}(p_2(u)-p_1(u)) \prod_{i=1}^d \gamma_{j_i}(p_2(t_i) -p_1(t_i)), ~~~~~~~~~~~~~(**)$$ where the sum is over multi-indices $J = (j_0, \cdots , j_d)$ of nonnegative integers. \newline

This descent datum determines a Galois action on $\Acris(R) \otimes \SM$ in the same way as the descent datum $f$ determined a Galois action on $\AinfR \otimes_{\GS} \varphi^* \GM$ just with $\GST$ and $\AinfR$ replaced by $S^{(2)}$ and $\AcrisR$. Specifically, let $g \in G_R.$ The maps $f_1$ and $f_2$ from Section \ref{KisinGaloisAction} extend to embeddings $S \to \AcrisR$ and by the analogous universal property as discussed at the beginning of Section \ref{KisinModulesSection}, there exists a unique map $f_{S, g}: (S^{(1)}, (E)) \to (\Acris(R), (E))$ and since $f_S$ satisfues the cocycle condition, there exists a map $d$, depending on $g$, making the diagram commute: \newline

  \adjustbox{scale=1,center}{%
  \begin{tikzcd}[scale=1]   
  S^{(1)} \otimes_{p_1, S} \SM \arrow{r}{f_S} \arrow{d}{f_1} &S^{(1)} \otimes_{p_2, S} \SM \arrow{d}{f_{S,g}}\\
  \AcrisR \otimes_{S} \SM \arrow{r}{d_g} & \AcrisR \otimes_{g, S} \SM
  \end{tikzcd}}\newline
  
which establishes the desired Galois action on $\AcrisR \otimes_S \SM$. \newline 

So $f_S$ determines a Galois action of $G_R$ on $\AcrisR \otimes_S \SM$ and hence on $$T_S(\SM) \coloneqq \Fil^r(\AcrisR \otimes_S \SM)^{\varphi_r=1}.$$

We have seen two ways of defining a Galois action on $T_S(\SM)$: the first via the descent datum $f_S$ and the second via the connection $\nabla$. Via the explicit formulas $(*)$ and $(**)$ above, we see that the two ways of defining the Galois action are the same.

\begin{remark}
We define $T_S$ analogously when working over a small base ring $T$ instead of $R$. Writing $S$ for $S_T$ and letting $\SM$ be a Breuil module over $S$, we have $T_S(\SM) \coloneqq \Fil^r(\Acris(T) \otimes_S \SM)^{\varphi_r=1}.$
\end{remark}
\end{subsubsection}

\begin{subsubsection}{Equivalence between $T_{\GS}^r$ and $T_S$}
We will now explore the relationship between $T_{\GS}^r(\GM)$ and $T_S(\SM)$ defined in the preceding sections. Continue to assume that $(\GM, \varphi_{\GM}, f)$ is a Kisin module arising from a $\mathbb{Z}_p$-crystalline representation of $G_R$. Let $\SM$ be a Breuil module arising from $(\GM, \varphi_{\GM}, f)$ as in Section \ref{BreuilModuleSection} (recall this means $\SM = \SM(\GM) \coloneqq \GM \otimes_{\GS, \varphi_{\GS}} S$).\newline 

Now consider the natural map $$\alpha: \AinfR \otimes_{\GS} \varphi^*\GM  \to  \AcrisR \otimes_S \SM.$$ 
  
  By \cite[Prop. 4.6]{Fcrystals}, the map $f_S$ as defined in the previous section (originally constructed in \cite[Construction 4.3]{Fcrystals}) satisfies $$f_S = S^{(1)} \otimes_{\varphi, \GS^{(1)}} f.$$ Thus we see that $f_S$ and $S^{(1)} \otimes_{\varphi, \GS^{(1)}} f$ define the same Galois action on $\AcrisR \otimes_S \SM$, as we discussed how $f$ defines a Galois action on $\AinfR \otimes \varphi^*\GM$ using the following diagram:\newline

\adjustbox{scale=1,center}{%
  \begin{tikzcd}[scale=1]   
  \GST \otimes_{p_1, \GS} \GM \arrow{r}{f} \arrow{d}{f_1} &\GST \otimes_{p_2, \GS} \GM \arrow{d}{f_g}\\
  \AinfR \otimes_{\GS} \GM \arrow{r}{d_g} & \AinfR \otimes_{g, \GS} \GM
  \end{tikzcd}} \newline

  and $f_S$ defines a Galois action on $\AcrisR \otimes \SM$ via the same diagram mutatis mutandi: \newline

  \adjustbox{scale=1,center}{%
  \begin{tikzcd}[scale=1]   
  S^{(1)} \otimes_{p_1, S} \SM \arrow{r}{f_S} \arrow{d}{f_1} &S^{(1)} \otimes_{p_2, S} \SM \arrow{d}{f_{S,g}}\\
  \AcrisR \otimes_{S} \SM \arrow{r}{d_g} & \AcrisR \otimes_{g, S} \SM
  \end{tikzcd}}\newline

Thus we see that the Galois action established on $T_{\GS}^r(\GM)$ is compatible with the Galois action on $T_S(\SM)$ via the natural map $\alpha$.
  
  As $\SM = \SM(\GM)$, it is standard to see from the definitions of the filtration structure on $\GM$ and $\SM$ that the map $\alpha$ respects the filtration and $\varphi_r$ structures. This gives us a natural injective map $$T_{\GS}^r(\GM) \to T_S(\SM)$$ compatible with the $G_R$-action, filtration, and $\varphi_i$, structure. \newline

We will later prove the following theorem:

\begin{theorem}
\label{TGTScompat}
The natural injection $$T_{\GS}^r(\GM) \to T_S(\SM) \coloneqq \Fil^r(\AcrisR \otimes_S \SM)^{\varphi_r=1}$$ induced by the embedding $\AinfR \to \AcrisR$ is an isomorphism of $G_{R}$ representations.  
\end{theorem}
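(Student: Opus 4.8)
The map $\iota\colon T_{\GS}^r(\GM)\to T_S(\SM)$ of the theorem is already known to be a $G_R$-equivariant injection of $p$-torsion-free $\mathbb{Z}_p$-modules respecting $\Fil^r$ and $\varphi_r$, so only surjectivity is at issue. The plan is to reduce to the classical case along the closed-fibre base change $\overline{b}\colon R\to W(k)$ of Section~\ref{ClosedPointBaseChange}. Write $\GS_0=W(k)\llbracket u\rrbracket$, $\GM_0\coloneqq\GM\otimes_{\GS}\GS_0$, and $\SM_0\coloneqq\SM\otimes_S S_0=\SM(\GM_0)$ (the last equality because $\GS\to\GS_0$ is $\varphi$-equivariant). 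By Theorem~\ref{Kisinbasechange} (equivalently Corollary~\ref{quasiKisinclosed}), $\GM_0$ is a classical Kisin module associated to $\Lambda\vert_{G_K}$; the maps $\AinfR\to\Ainf(W(k))$ and $\AcrisR\to\Acris$ induced by $\overline b$ intertwine the descent-datum Galois actions (apply the constructions of Sections~\ref{KisinGaloisAction} and~\ref{BreuilGaloisActionTwo} to $f$ and to its base change along $\GS\to\GS_0$), while $\Fil^r$ (via Lemma~\ref{FilinS} over $\GS$ and over $\GS_0$) and $\varphi_r$ are visibly preserved. We thus obtain a commutative square of $\mathbb{Z}_p$-modules, $G_K$-equivariant for the restricted actions: \newline

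\adjustbox{scale=1,center}{
\begin{tikzcd}[column sep=large]
T_{\GS}^r(\GM)\arrow{r}{\iota}\arrow{d} & T_S(\SM)\arrow{d}\\
T_{\GS_0}^r(\GM_0)\arrow{r}{\iota_0} & T_{S_0}(\SM_0)
\end{tikzcd}} \newline

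\noindent whose top row is $\iota$.

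The argument then rests on three facts. (i) \emph{The left vertical is an isomorphism}: by \cite[Lemma~6.11]{LiLiu} $T_{\GS}^r(\GM)\cong T_{\GS}(\GM)(r)$, by Lemma~\ref{etaleKisinGaloisEq} $T_{\GS}(\GM)\cong T_{\etalering}(\mathcal M)$, and the base-change statements in Proposition~\ref{latticeetalephi} and Theorem~\ref{Kisinbasechange} make these identifications compatible with $\overline b$; together with Remark~\ref{dualremark} this identifies both $T_{\GS}^r(\GM)$ and $T_{\GS_0}^r(\GM_0)$ with the underlying $\mathbb{Z}_p$-module of $\Lambda$, the left vertical becoming the identity. (ii) \emph{The bottom row $\iota_0$ is an isomorphism}: this is the classical comparison between the Galois lattice of a Breuil--Kisin module and that of its associated Breuil module; writing $M_0\coloneqq W(k)\otimes_{\varphi,W(k)}(\GM_0/u\GM_0)$ one has $\SM(\GM_0)=\SM(M_0)$ by Breuil--Kisin theory, $T_{S_0}(\SM(M_0))=\Tcris(M_0)$ by Section~\ref{BreuilGaloisActionTwo}, and $\Tcris(M_0)=\Lambda\vert_{G_K}=T_{\GS_0}^r(\GM_0)$ by classical Fontaine--Laffaille theory, with $\iota_0$ the resulting identity (cf.\ \cite{Breuil}, \cite{Gao}). (iii) \emph{The right vertical is injective}: after inverting $p$, using $\SM[p^{-1}]\cong S[p^{-1}]\otimes_R M$ with $M[p^{-1}]\cong\Dcrisdual(V)$ as in Section~\ref{BreuilModuleSection} and the crystalline comparison for $\OBcris$, both $T_S(\SM)[p^{-1}]$ and $T_{S_0}(\SM_0)[p^{-1}]$ are identified with $V$ compatibly with $\overline b$, so the right vertical is rationally an isomorphism; since $T_S(\SM)$ is $p$-torsion-free it is injective.

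Granting (i)--(iii) the conclusion is formal: commutativity gives $\iota_0\circ(\text{left vertical})=(\text{right vertical})\circ\iota$, whose left-hand side is an isomorphism by (i) and (ii), so the right vertical is surjective; being also injective by (iii) it is an isomorphism, and therefore $\iota=(\text{right vertical})^{-1}\circ\iota_0\circ(\text{left vertical})$ is an isomorphism. I expect the real difficulty to be compatibility bookkeeping rather than any single estimate: one must check that the descent-datum Galois actions, $\Fil^r$, $\varphi_r$ and the identifications with $\Lambda$ and with $\Dcrisdual(V)$ all stay aligned under $\overline b$ and under $\AinfR\to\Ainf(W(k))$, $\AcrisR\to\Acris$ — the most delicate being (iii), i.e.\ that the rational crystalline comparisons over $R$ and over $W(k)$ match up. One could equally, or additionally, route the reduction through the flat Shilov point $b_g\colon R\hookrightarrow R_g$ of Section~\ref{ShilovPointBaseChange}, but the closed fibre already suffices.
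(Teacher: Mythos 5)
Your argument is correct in outline, and it is structurally parallel to the paper's proof: both reduce to a point where the classical comparison between $T_{\GS}^r$ and $T_S$ is known, place the map $\iota$ in a commutative square, and win by a diagram chase using injectivity of $\iota$. The genuine differences are two. First, you specialize along the closed fibre $\overline{b}\colon R\to W(k)$, whereas the paper specializes to the Shilov point $R_g$; both work because the classical statement (\cite[Prop.\ 6.12]{LiLiu}) is available at either fibre, and your choice has the mild advantage that $S_0$ is small so no $p$-adic completion issues arise in forming $\SM_0=\SM\otimes_S S_0$. Second, and more substantively, your step (iii) — injectivity of $T_S(\SM)\to T_{S_0}(\SM_0)$ — is handled by a rational argument identifying $T_S(\SM)[p^{-1}]$ with $V$; the paper instead sidesteps the injectivity of its right vertical entirely (it explicitly notes it does \emph{not} know whether $T_S(\SM)\to \Fil^r(\Acris(R_g)\otimes_S\SM_g)^{\varphi_r=1}$ is injective, because $\Acris(R)\to\Acris(R_g)$ involves a $p$-adic completion) by extending the square to the right through the section-induced isomorphism $T_S(\SM)\cong\Tcris(M)$ and invoking the integral result $\Tcris(M)\xrightarrow{\ \cong\ }\Tcris(M_g)$ of \cite[Cor.\ 2.3.5]{LMP}. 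Your (iii) is the one step that needs a precise citation: the identification $T_S(\SM)[p^{-1}]\cong V$ compatibly with $\overline b$ is exactly the rational form of the comparison being refined, so you must source it to the established rational theory (Brinon/\cite{Fcrystals}) rather than to anything proved later in the paper, and you should also record the (standard but choice-dependent) extension of $\overline b$ to $\overline R\to\mathcal O_{\overline K}$ that produces $\AcrisR\to\Acris$ and the map $G_K\to G_R$. Your closing remark that one "could equally" route through the Shilov point glosses over precisely the injectivity obstruction the paper flags there. Finally, note the paper proves the isomorphism first only as $G_{\tilde R_\infty}$-representations and then upgrades to $G_R$ via the descent-datum/connection compatibility; since that compatibility is already established before the theorem is stated, your decision to treat $G_R$-equivariance of $\iota$ as given and argue only bijectivity is legitimate.
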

but we will need information about the Fontaine-Laffaille module associated to $\SM$ which won't be established until the next section. Nevertheless, we state the result here in our section on Galois representations for completeness.
\end{subsubsection}

\end{subsection}
\begin{subsection}{A summary of categories}
Here we include a brief diagram summarizing the relationship between the many categories introduced above in the setting of our base ring $R$:

 \adjustbox{scale=1,center}{%
  \begin{tikzcd}[scale=1]    
 ~ & \mathrm{Rep}_{\mathbb{Z}_p, [0,r]}^{\cris}(G_{R}) &\arrow{l}[above]{\Tcris} \mathrm{MF}_{\nabla,[0,r]}^{\mathrm{ff}} \arrow{d}{-~ \widehat{\otimes}_R S}\\
  ~ \mathrm{Mod}_{\etalering}^{\mathrm{ff}} \arrow{ur}{T_{\etalering}}& \arrow{l}[below]{\etalering \otimes_{\GS}~-} \mathrm{DD}_{\GS, [0,r]}^{\mathrm{ff}} \arrow{r}[below]{-~\otimes_{\GS, \varphi} S}  \arrow{u}{T_{\GS}^r}[right]{\cong}&  \mathrm{Mod}_{S, \nabla}^{\mathrm{ff},~r} \arrow{ul}[above]{T_S}\\
  \end{tikzcd}
} 
\end{subsection}
\begin{subsection}{$T_{\cris}$ is fully faithful}
To conclude Section 2, we will show that $\Tcris$ is fully faithful. First we work over the base ring $R$. \newline 

Let $M_1$ and $M_2$ be Fontaine-Laffaille modules over $R$ so that there exists a morphism $$\psi: \Tcris(M_1) \to \Tcris(M_2)$$ of $G_R$ representations. By Proposition \ref{latticeetalephi} there is an equivalence of categories between $\mathbb{Z}_p$ stable lattices in finite free $G_{R_{\infty}}$ representations and \'etale $(\varphi_{\etalering} , \etalering)$-modules and thus exists a unique morphism $\psi_{\mathcal{M}}: \mathcal{M}_1 \to \mathcal{M}_2$ where $\mathcal{M}_i$ is the \'etale $(\etalering, \varphi_{\etalering})$-module associated to the $G_\T$ representation $\Tcris(M_i).$ \newline 
 
Moreover, there is a unique morphism $\psi_{\GM}: \GM_1 \to \GM_2$ between the associated Kisin modules with descent data by \cite[Proposition 3.25, Theorem 3.28]{Fcrystals}. Proposition 3.25 establishes an equivalence of categories between $\mathrm{DD}^f_{\GS}$ and a category of prismatic $F$-crystals, and Theorem 3.28 establishes that this category of prismatic $F$-crystals is equivalent to $\mathrm{Rep}_{\mathbb{Z}_p, [0,r]}^{\mathrm{cris}}(G_\T)$. So it suffices to show that $\psi_{\GM}$ induces a map $\psi_M: M_1 \to M_2$ such that $T_{\mathrm{cris}}(\psi_M)$ agrees with $T_{\GS}^r(\psi_{\GM})$ on $\Tcris(M_1) \cong T_{\GS}^r(\GM_1)$, which are isomorphic as established above. \newline

Set $\psi_{\SM} = \psi_{\GM} \otimes_{\GS, \varphi_{\GS}} S: \SM_1 \to \SM_2$ which is a morphism of the associated Breuil modules. We can then obtain a morphism on the associated Fontaine-Laffaille modules using the unique sections guaranteed by Theorem \ref{sectionstable}. Let $s_1$ be the section $M_1 \to \SM_1$ and $s_2$ the section $M_2 \to \SM_2$ as in the following diagram: 

 \adjustbox{scale=1,center}{%
  \begin{tikzcd}[scale=1]    
      \SM_1 \arrow{r}{\psi_{\SM}} & \SM_2\\
      M_1 \arrow{r}{\psi_M} \arrow{u}{s_1}& M_2 \arrow{u}{s_2}
  \end{tikzcd}
} 

where the bottom map $\psi_M$ is defined to be the natural projection $\SM_2 \to M_2$ composed with $\psi_{\SM} \circ s_1$. Tensoring the above diagram with $\Acris,$ we see that $\Tcris(\psi_M)$ agrees with $T_S(\psi_{\SM}).$ If we let $g_i: \GM_i \to \SM_i$ denote the map $\GM_i \to \GM_i \otimes_{\GS, \varphi_{\GS}} S$, we obtain a similar diagram 

 \adjustbox{scale=1,center}{%
  \begin{tikzcd}[scale=1]    
      \GM_1 \arrow{r}{\psi_{\GM}} \arrow{d}{g_1} & \GM_2\arrow{d}{g_2}\\
      \SM_1 \arrow{r}{\psi_{\SM}} & \SM_2 
  \end{tikzcd}
} 

which shows that $T_{\GS}^r(\psi_{\GM})$ will agree with $T_S(\psi_{\SM}).$ \newline

To complete the proof of fully-faithfulness, we need to justify the uniqueness of $\psi_M$. This follows from the established rational theory. Such a morphism $\psi_M: M_1 \to M_2$ induces a morphism $\psi_{M[p^{-1}]}: M_1[p^{-1}] \to M_2[p^{-1}] $ which is known to be unique as \cite[Proposition 4.28]{Fcrystals} establishes an isomorphism $M_i[p^{-1}] \to D_{\cris}^\vee(T_\cris(M_i) \otimes_{\mathrm{Z}_p} \mathbb{Q}_p)$ and the induced morphism on $D_\cris^\vee$ is known to be unique by the original theory of Brinon in \cite{BrinonPeriodRings}. 

Thus we have established:
\begin{theorem}
The functor $$\Tcris: \mathrm{MF}^{\mathrm{ff}}_{\nabla, [0, r]}(R) \to \mathrm{Rep}_{\mathbb{Z}_p, [0, r]}^{\mathrm{cris}}(G_R)$$ given by $$\Tcris(M) \coloneqq (\Fil^r(A_{\mathrm{cris}}(R) \otimes_{R} M))^{\varphi_r=1}$$ is fully faithful. 
\end{theorem}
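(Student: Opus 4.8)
The plan is to prove full faithfulness by transporting a morphism of Galois representations through the chain of functors built up in the previous subsections, using the Breuil module as the bridge between the Kisin-theoretic and the Fontaine--Laffaille-theoretic descriptions of a lattice. Fix $M_1, M_2 \in \mathrm{MF}^{\mathrm{ff}}_{\nabla, [0,r]}(R)$ and a $G_R$-equivariant map $\psi \colon \Tcris(M_1) \to \Tcris(M_2)$; it suffices to produce a \emph{unique} morphism $\psi_M \colon M_1 \to M_2$ in $\mathrm{MF}^{\mathrm{ff}}_{\nabla,[0,r]}(R)$ with $\Tcris(\psi_M) = \psi$, since this gives fullness and, via the uniqueness, faithfulness at once. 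First I would transport $\psi$ along the equivalence of Proposition \ref{latticeetalephi} to the unique morphism $\psi_{\mathcal{M}} \colon \mathcal{M}_1 \to \mathcal{M}_2$ of the associated \'etale $(\varphi_{\etalering}, \etalering)$-modules, and then along the equivalence of Theorem \ref{equivalencedescentrep} (\cite{Fcrystals}, Prop.~3.25 and Thm.~3.28) to the unique morphism $\psi_{\GM} \colon \GM_1 \to \GM_2$ of the associated Kisin modules with descent data.

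Next I would construct the candidate $\psi_M$. Applying $-\otimes_{\GS,\varphi_{\GS}} S$ produces a morphism $\psi_{\SM} := \psi_{\GM}\otimes_{\GS,\varphi_{\GS}} S \colon \SM_1 \to \SM_2$ of the associated Breuil modules $\SM_i$. Using the canonical section $s_i \colon M_i \to \SM_i$ of the natural projection $\SM_i \to M_i$ (Theorem \ref{sectionstable}), I would \emph{define} $\psi_M$ to be the natural projection $\SM_2 \to M_2$ composed with $\psi_{\SM}\circ s_1$, i.e.\ the composite $M_1 \xrightarrow{s_1} \SM_1 \xrightarrow{\psi_{\SM}} \SM_2 \to M_2$, and then check that it is a morphism in $\mathrm{MF}^{\mathrm{ff}}_{\nabla,[0,r]}(R)$: compatibility with $\Fil^{\bullet}$, with the $\varphi_i$, and with $\nabla$ is inherited from $\psi_{\SM}$ together with the functoriality of the sections $s_i$.

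To see $\Tcris(\psi_M) = \psi$ I would base change everything to $\Acris(R)$. The square relating $g_i \colon \GM_i \to \SM_i$ to $\psi_{\GM}$ and $\psi_{\SM}$ commutes, so under the natural map $\alpha$ of Section~\ref{BreuilGaloisActionTwo} one obtains $T_S(\psi_{\SM}) = T_{\GS}^r(\psi_{\GM})$ via the isomorphism $T_{\GS}^r(\GM_i) \xrightarrow{\sim} T_S(\SM_i)$ of Theorem \ref{TGTScompat}; likewise the square relating $s_i$ to $\psi_M$ and $\psi_{\SM}$ yields $\Tcris(\psi_M) = T_S(\psi_{\SM})$ through the identification $\Tcris(M_i) = T_S(\SM_i)$. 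Since the composed identification $\Tcris(M_i) \cong T_S(\SM_i) \cong T_{\GS}^r(\GM_i) \cong T_{\etalering}(\mathcal{M}_i)$ carries $\psi$ to $\psi_{\GM}$ by the very construction of $\psi_{\GM}$, chasing these equalities gives $\Tcris(\psi_M) = \psi$.

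For uniqueness, hence faithfulness, any $\psi_M$ with $\Tcris(\psi_M) = \psi$ induces $\psi_{M[p^{-1}]} \colon M_1[p^{-1}] \to M_2[p^{-1}]$, and under the $\varphi$-compatible isomorphism $M_i[p^{-1}] \cong \Dcrisdual(\Tcris(M_i)\otimes_{\mathbb{Z}_p}\mathbb{Q}_p)$ of \cite[Theorem 4.28]{Fcrystals} this corresponds to $\Dcrisdual(\psi\otimes\mathbb{Q}_p)$, which is unique by the relative crystalline theory of Brinon \cite{BrinonPeriodRings}; since $M_i \hookrightarrow M_i[p^{-1}]$, the map $\psi_M$ is thereby determined. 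I expect the main obstacle to be not any single step but the compatibility bookkeeping in the third paragraph: one must know that \emph{all} of the period-ring-level identifications fit together --- $\Tcris(M_i) \cong T_S(\SM_i)$, $T_S(\SM_i) \cong T_{\GS}^r(\GM_i)$, and the coincidence of the descent-datum description and the $\nabla$-description of the $G_R$-action on $\Acris(R)\otimes_S \SM_i$ --- which is precisely the content of Theorem \ref{TGTScompat} and the comparison of the explicit formulas $(*)$ and $(**)$ in Section~\ref{BreuilGaloisActionTwo}, so the argument is complete only once those have been established.
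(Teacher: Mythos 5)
Your proposal is correct and follows essentially the same route as the paper: transport $\psi$ through the \'etale $\varphi$-module and Kisin-module equivalences to get $\psi_{\GM}$, pass to the Breuil modules via $-\otimes_{\GS,\varphi_{\GS}}S$, define $\psi_M$ using the unique sections $s_i$ and the projection, verify $\Tcris(\psi_M)=\psi$ by the same commuting squares, and deduce uniqueness from the rational theory via $D_{\cris}^\vee$ and Brinon. The paper's argument is the same in every essential step, including the reliance on the compatibility of the various period-ring-level identifications that you flag as the main bookkeeping burden.
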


We can also establish fully-faithfulness for small base rings via base change. \newline

We also have the analogous result for the Tate Algebra: 
\begin{theorem}
The functor $$\Tcris: \mathrm{MF}^{\mathrm{ff}}_{\nabla, [0, r]}(T) \to \mathrm{Rep}_{\mathbb{Z}_p, [0, r]}^{\mathrm{cris}}(G_T)$$ given by $$\Tcris(M) \coloneqq (\Fil^r(A_{\mathrm{cris}}(T) \otimes_{T} M))^{\varphi_r=1}$$ is fully faithful. 

\begin{proof}
    Let $M_1$ and $M_2$ be Fontaine-Laffaille modules over $T$ so that there exists a morphism $$\psi: \Tcris(M_1) \to \Tcris(M_2)$$ of $G_T$ representations. Write $\hatM$ for $M \otimes_T \Tatem.$ We get an induced map $\Tcris(\hatM_1) \to \Tcris(\hatM_2)$, for which there exists a unique morhphism $\hatM_1 \to \hatM_2$  as $\Tcris$ is fully faithful with the base ring $R$. We also have a unique induced map $D_1 = M_1[p^{-1}] \to M_2[p^{-1}] = D_2$, setting up the following diagram:  
    \adjustbox{scale=1,center}{%
  \begin{tikzcd}[scale=1]    
      \hatM_1 \arrow{r}{\varphi}& \hatM_2 \\
      M_1 \arrow{u}& M_2 \arrow{u}
  \end{tikzcd}
} 
where the vertical maps are injective. But the image of $M$ under $\varphi$ is contained in $\hatM_2 \cap M_2[p^{-1}] = M_2$, completing the proof of fully faithfulness. 
\end{proof}
\end{theorem}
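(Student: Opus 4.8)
The plan is to reduce the statement over the small base ring $T$ to the fully faithfulness of $\Tcris$ over the power series ring $R$, which was established above, together with the rational theory over $T[p^{-1}]$. So fix $M_1, M_2 \in \mathrm{MF}^{\mathrm{ff}}_{\nabla,[0,r]}(T)$ and a morphism $\psi\colon \Tcris(M_1) \to \Tcris(M_2)$ of $G_T$-representations; the goal is a unique $\psi_M\colon M_1 \to M_2$ with $\Tcris(\psi_M) = \psi$. First I would dispose of uniqueness: inverting $p$, the comparison $M_i[p^{-1}] \xrightarrow{\sim} \Dcrisdual(\Tcris(M_i)\otimes_{\mathbb{Z}_p}\mathbb{Q}_p)$ (the analogue over $T$ of \cite[Prop. 4.28]{Fcrystals}, valid for any general base ring) together with Brinon's theory of $\Dcrisdual$ in \cite{BrinonPeriodRings} shows that $\psi\otimes\mathbb{Q}_p$ is induced by a \emph{unique} $T[p^{-1}]$-linear morphism $\psi_D\colon M_1[p^{-1}] \to M_2[p^{-1}]$ respecting all structures. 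Since $M_i \hookrightarrow M_i[p^{-1}]$ ($M_i$ is finite free over the domain $T$) and $\Tcris(M_i)\hookrightarrow \Tcris(M_i)\otimes\mathbb{Q}_p$, any lift $\psi_M$ must equal $\psi_D|_{M_1}$, so uniqueness holds and existence amounts to proving $\psi_D(M_1)\subseteq M_2$.

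For existence I would base change to the completed local rings. Fix a maximal ideal $\mathfrak{m}$ of $T$ and let $b_{\mathfrak{m}}\colon T \to \Tatem \cong R$ be the Frobenius-equivariant flat map of Section \ref{SmallBaseChange}. Set $\hatM_i \coloneqq M_i \otimes_{b_{\mathfrak{m}}, T}\Tatem$; the forward direction of Theorem \ref{TateLMPTheorem} gives $\hatM_i \in \mathrm{MF}^{\mathrm{ff}}_{\nabla,[0,r]}(\Tatem)$, and compatibility of $A_{\mathrm{cris}}$ with this base change — equivalently compatibility of $T_S$ and $T_{\GS}^r$, cf.\ Remark \ref{KisinIsomRem} and Theorem \ref{Kisinbasechange} — identifies $\Tcris(\hatM_i)$ with $\Tcris(M_i)$ as a $G_{\Tatem}$-representation via $G_{\Tatem}\to G_T$. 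Hence $\psi$ restricts to a morphism $\Tcris(\hatM_1)\to\Tcris(\hatM_2)$ of $G_{\Tatem}$-representations, which by fully faithfulness of $\Tcris$ over $R$ is induced by a morphism $\hatM_1\to\hatM_2$; after inverting $p$ this morphism is forced to be $\psi_D\otimes_T\Tatem$. Therefore $\psi_D(M_1)$ is contained in $\hatM_2 \cap M_2[p^{-1}]$ inside $\hatM_2[p^{-1}]$.

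It remains to check $\hatM_2 \cap M_2[p^{-1}] = M_2$. Because $M_2$ is finite free over $T$ this reduces to the commutative-algebra fact $p^n\Tatem \cap T = p^n T$ for all $n$; this follows since $p$ is a prime element of $T$, or, avoiding that hypothesis, by running the preceding paragraph at every maximal ideal simultaneously and invoking that a finitely generated $T$-module vanishing at all localizations is zero. Setting $\psi_M \coloneqq \psi_D|_{M_1}\colon M_1 \to M_2$ then finishes the proof: $\psi_M$ is a morphism of Fontaine-Laffaille modules because $\psi_D$ is, and $\Tcris(\psi_M)=\psi$ since the two agree after $\otimes\mathbb{Q}_p$ and $\Tcris(M_2)$ is $p$-torsion free. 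The step I expect to be the main obstacle is the base-change compatibility $\Tcris(M)|_{G_{\Tatem}} \cong \Tcris(M\otimes_T\Tatem)$: one must know that forming $\Fil^r$ and the $\varphi_r=1$ eigenspace commutes with the flat, Frobenius-equivariant map $A_{\mathrm{cris}}(T)\to A_{\mathrm{cris}}(\Tatem)$, which is precisely what allows the whole problem to descend to the already-settled power series ring case.
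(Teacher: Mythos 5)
Your proposal follows the same route as the paper's proof: reduce to the already-established fully faithfulness over $R$ via the base change $T \to \Tatem$, use the rational theory of $D_{\cris}^\vee$ for uniqueness, and conclude by showing the induced map carries $M_1$ into $\hatM_2 \cap M_2[p^{-1}] = M_2$. You supply more detail than the paper does at two points it leaves implicit — the compatibility $\Tcris(M)\vert_{G_{\Tatem}} \cong \Tcris(M\otimes_T\Tatem)$ and the verification of the intersection identity — but the argument is the same.
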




\end{subsection}
\end{section}

\begin{section}{Essential Surjectivity of $T_{\cris}$ over Power Series Rings}
In this section we aim to prove the following two theorems, working over the base ring $R$. Let $I_0 \subset S$ denote the kernel of the projection $S \to R$ given by $u \mapsto 0.$

\begin{theorem}
\label{MisFL}
Let $\SM$ be a Breuil module which arises from a Kisin module which arises from a $\Lambda,$ an object in $\mathrm{Rep}_{\mathbb{Z}_p}^{\cris [0,r]}(G_R).$  Set $$M \coloneqq \SM/I_0\SM \cong \GM/u\GM \otimes_{R, \varphi_{\GS}} R.$$ Then $M$ carries the data of a Fontaine-Laffaille module and satisfies $M \otimes_R S \cong \SM$ as in Definition \ref{classicalfunctor} We call it the ``Fontaine-Laffaille module associated to $\Lambda$."
\end{theorem}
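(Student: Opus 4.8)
The plan is to equip $M$ with candidate Fontaine--Laffaille data obtained by reduction along the projection $S \twoheadrightarrow S/I_0 = R$ (the map $u \mapsto 0$), and then to verify the Fontaine--Laffaille axioms by descending to the classical theory through the closed-fibre and Shilov-point base changes of Section~\ref{BaseChangeSection}. Since the Kisin module $\GM$ is finite free over $\GS$ by Remark~\ref{freeremark}, its reduction $M \cong \GM/u\GM \otimes_{R,\varphi_{\GS}} R$ is finite free over $R$. Writing $q\colon \SM \to \SM/I_0\SM = M$ for the projection, one sets $\Fil^i(M) \coloneqq q(\Fil^i(\SM))$, equips $M$ with the evident $\iota_i$ (inclusions) and $\pi_i$, takes $\varphi_{M,i}$ to be the map induced by $\varphi_{\SM,i}$ (legitimate since $\varphi_S(I_0) \subseteq I_0$), and takes $\nabla_M$ to be induced by $\nabla_{\SM}$ through $\SM \otimes_R \widehat{\Omega}_R \to M \otimes_R \widehat{\Omega}_R$; the elementary compatibilities among the $\varphi_{\SM,i}$ and $\iota_i$ then show that $M$, with this data, is an object of $\mathrm{MF}^{\mathrm{f},[0,r]}_{\mathrm{big}}(R)$ with each $\Fil^i(M)$ finite over $R$. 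For the identification $M \otimes_R S \cong \SM$: a section $s\colon M \hookrightarrow \SM$ of $q$ compatible with $\varphi$ and the filtration is available (cf.\ Lemma~\ref{sectionstable}), it lifts an $R$-basis of $M$ to an $S$-basis of $\SM$, so $m \otimes x \mapsto s(m)x$ is an $S$-module isomorphism, and compatibility of $s$ with $\varphi$ and the filtration forces it to respect the structures of Definition~\ref{classicalfunctor}.

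To see that $(M, \Fil^i(M), \varphi_{M,i})$ lies in $\mathrm{MF}^{\mathrm{ff}}(R)$ I would invoke Proposition~\ref{genericclosed}, so it suffices to verify its two conditions, the direct-summand property and freeness of the $\Fil^i(M)$ then emerging as part of the conclusion. For the closed fibre, Corollary~\ref{quasiKisinclosed} shows that $\GM \otimes_{\GS} \GS_0$ is a classical Kisin module over $W(k)\llbracket u\rrbracket$ attached to a $\mathbb{Z}_p$-lattice in a crystalline $G_K$-representation with Hodge--Tate weights in $[0,r]$; by classical Kisin--Breuil theory together with Gao's equivalence \cite[Theorem~1.1]{Gao}, $M_0 \coloneqq M \otimes_{R,\overline{b}} W(k)$ is canonically the classical Fontaine--Laffaille module attached to this Kisin module, hence lies in the classical category over $W(k)$. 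For the Shilov point, Theorem~\ref{Kisinbasechange} identifies $\GM \otimes_{\GS} W(k_g)\llbracket u\rrbracket$ with the Kisin module of $\Lambda\vert_{G_{R_g}}$; as $R_g = W(k_g)$ with $k_g$ perfect, this is again the classical setting, and $M_g \coloneqq M \otimes_R R_g$ lies in the classical category over $W(k_g)$. The remaining compatibility $M_g \cong M_0 \otimes_{W(k)} R_g$ follows from transitivity of Kisin-module base change --- both $\GM \otimes_{\GS} W(k_g)\llbracket u\rrbracket$ and $(\GM \otimes_{\GS} \GS_0) \otimes_{\GS_0} W(k_g)\llbracket u\rrbracket$ are the Kisin module of $\Lambda\vert_{G_{R_g}}$, by the uniqueness built into Theorem~\ref{Kisinbasechange} --- together with the fact that the formation of $M$ and its filtration from $\GM$ commutes with these base changes. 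Proposition~\ref{genericclosed} then gives the direct-summand property, the vanishing $\Fil^{r+1}(M) = 0$, and $\sum_i \varphi_{M,i}(\Fil^i(M)) = M$, so $M \in \mathrm{MF}^{\mathrm{ff},[0,r]}(R)$.

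Finally the connection data descends. By Proposition~\ref{BreuilNablaStable} the Breuil module $\SM$ is stable under $\nabla_{\SM}$, satisfies integral $S$-Griffiths transversality, and its $\varphi_{\SM,i}$ commute with $\nabla_{\SM}$; pushing these through $q$ shows $\nabla_M$ is a well-defined integrable connection with $\nabla_M(\Fil^i(M)) \subseteq \Fil^{i-1}(M) \otimes_R \widehat{\Omega}_R$ and $\nabla_M \circ \varphi_{M,i} = (\varphi_{M,i-1} \otimes d\varphi_1) \circ \nabla_M$, so the $\varphi_i$ are parallel. Hence $M \in \mathrm{MF}^{\mathrm{ff}}_{\nabla,[0,r]}(R)$, and combined with the section isomorphism of the first paragraph this yields $M \otimes_R S \cong \SM$ as in Definition~\ref{classicalfunctor}. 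The step I expect to be the real obstacle is the base-change compatibility in the middle paragraph: one must show not merely that each fibre is abstractly a classical Fontaine--Laffaille module, but that the classical construction applied to $\GM \otimes_{\GS} \GS_0$ (respectively to $\GM \otimes_{\GS} W(k_g)\llbracket u\rrbracket$) literally computes $M_0$ (respectively $M_g$) together with its induced filtration and $\varphi_i$, so that condition~(2) of Proposition~\ref{genericclosed} holds on the nose; this requires tracking carefully the Frobenius twist introduced by $u \mapsto 0$ and the base-change behaviour of Gao's functor.
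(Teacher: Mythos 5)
There are two genuine gaps here. The first is in your very first step: defining $\Fil^i(M) \coloneqq q(\Fil^i(\SM))$ with $q$ the reduction modulo $I_0$ (i.e.\ $u \mapsto 0$) cannot work. Since $(1\otimes\varphi_{\GM})$ is $S$-linear, $E(u)^i\SM \subset \Fil^i(\SM)$, and $q(E(u)^i) = (\mp p)^i \neq 0$, so your $\Fil^i(M)$ always contains $p^iM$; it therefore can never be a direct summand of $M$ (hence never part of a Fontaine--Laffaille structure) unless $\Fil^i(M) = M$. The paper instead realizes $M$ as a submodule of $\SM$ via the unique $\varphi$-compatible section $s$ of Lemma \ref{sectionstable} and sets $\Fil^i(M) \coloneqq \Fil^i(\SM) \cap M$. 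The image-type description it does eventually use is the image of $\Fil^i(\SM)$ under the map $f_p$ determined by $u \mapsto p$ --- which kills $E(u)$ --- and even then, proving that this image $\F^iM$ coincides with $\Fil^i(M)$ is a separate, nontrivial lemma.

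The second gap is the one you yourself flag as ``the real obstacle,'' and it is indeed where essentially all of the content of the theorem lies; it does not follow from transitivity of Kisin-module base change. One must prove $\Fil^i(M)\otimes_R R_g \cong \Fil^i(M_g)$ and $\Fil^i(M)\otimes_R W(k) \cong \Fil^i(M_0)$, and neither is formal. On the Shilov side, $\SM \otimes_R R_g$ is \emph{not} $\SM_g$ (a $p$-adic completion over the non-Noetherian ring $S$ intervenes), so the paper passes to the Noetherian quotients $S/\Fil^iS$ for $i<p$, proves the decomposition $\Fil^i(\SM_g) = \Fil^i\SM\otimes_R R_g + \SM\otimes_S \Fil^i(S_g)$ (Lemma \ref{genericdecomp}), and then shows by an explicit coefficient argument that $\Fil^i(M_g)$ lands in the first summand (Lemma \ref{genericcontainment}). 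On the closed fibre, $W(k)$ is not flat over $R$, so one needs the vanishing of several groups $\Tor_1^R(-,W(k))$ (Lemma \ref{TorLemma}), which in turn rests on the cokernel of $\SM \to S/\Fil^iS\otimes_{\GS}\GM$ being finite free over $R$ (Lemma \ref{cokernelf}); only then does one obtain $\Fil^i(\SM_0) = \Fil^i\SM\otimes_R W(k)$ and, via the auxiliary filtration $\F^iM$, the compatibility needed for condition (2) of Proposition \ref{genericclosed}. Your outline has the correct architecture --- reduce to Proposition \ref{genericclosed} through the closed-fibre and Shilov-point base changes, then descend the connection --- but as written it asserts, rather than proves, the two statements that constitute the actual proof.
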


\begin{theorem}
Let $M$ be the Fontaine-Laffaille module associated to $\Lambda.$ Then $$T_{\mathrm{cris}}(M) \cong \Lambda.$$ 
\end{theorem}

Combined, these theorems complete the proof of essential surjectivity of $T_{cris}.$ For the remainder of Section 3, we will assume $\SM$ satisfies the hypotheses of the above theorem.

\begin{subsection}{Fontaile-Laffaille data on $M$}
We will now define the Fontaine-Laffaille data on $M$ and show that $M$ is indeed a Fontaine-Laffaille module with our key tool being Theorem \ref{genericclosed}. As  modules, we have from Theorem \ref{Kisinbasechange} and Proposition \ref{quasiKisinclosed} that $M_g \coloneqq M \otimes_{R} R_g$ and $M_0 \coloneqq M \otimes_{R} W(k)$ are isomorphic, as modules, to Fontaile-Laffaille modules arising from $\mathbb{Z}_p$ stable lattices in crystalline representations, so they come equipped with the data of a classical Fontaine-Laffaille module as in Definition \ref{FLdefClassic}. We do not yet know, though, whether this data, particularly the filtration data, is compatible with the base change maps. Showing this will be a major focus of this section.
\begin{subsubsection}{The Filtration and $\varphi_i$ structure on $M$}

We first develop the filtration and $\varphi_i$ structure on $M \coloneqq \mathscr{M}/I_0\mathscr{M} \cong \GM/u\GM \otimes_{R, \varphi} R$. The key is to show that the following Lemma from \cite{Fcrystals} descends to the integral situation:

\begin{lemma}[\cite{Fcrystals} Lemma 4.2]
\label{section}
Consider the projection $q: \SM \to M$ induced by the $\varphi$-compatible projection $S \to R$, $u \mapsto 0.$ Then $q$ admits a unique $\varphi$-compatible section $s: M[p^{-1}] \to \SM[p^{-1}].$ Furthermore, $1 \otimes s: S[p^{-1}] \otimes_{R[p^{-1}]} M[p^{-1}] \to \SM[p^{-1}]$ is an isomorphism.
\end{lemma}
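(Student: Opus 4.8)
The plan is to construct $s$ by successive approximation, exploiting that $\ker q$ is $u$-adically small and that $\varphi$ contracts it. The first step is to identify $\ker q$: since $E(u) = u \mp p$ reduces to $\mp p$ modulo $u$ and each $(\mp p)^{j}/j!$ already lies in $R$, the base change property of divided power envelopes along $\GS \to \GS/u\GS = R$ (using that $\mp p$ is a nonzerodivisor in $R$ generating a PD ideal) gives $S/uS \cong R$; equivalently $I_0 = uS$ and $\ker q = u\SM$. Together with $\varphi_S(u) = u^p$ and the $\varphi_S$-semilinearity of $\varphi_{\SM}$, this yields the key estimate $\varphi_{\SM}(u^k\SM) \subseteq u^{kp}\SM$. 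I would also record that the Frobenius $\varphi_M$ on $M[p^{-1}]$ has bijective linearization $\Phi_M$: modulo $u$ the cokernel of $1 \otimes \varphi_{\GM}$ is killed by $E(0)^r = (\mp p)^r$, so after inverting $p$ it becomes invertible (alternatively, \cite[Theorem 4.28]{Fcrystals} identifies $M[p^{-1}]$ with $\Dcrisdual$ of a crystalline representation, whose Frobenius is bijective).

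For existence and uniqueness I would introduce an operator $F$ on the (nonempty, since $\GM$ and hence $M$ is finite free by Remark \ref{freeremark}) set of $R[p^{-1}]$-linear sections of $q[p^{-1}]$: given such an $s$, both the $\varphi_R$-semilinear map $\varphi_{\SM} \circ s$ and $\varphi_M$ factor through $\varphi_R^{*}M[p^{-1}]$, so $F(s) \coloneqq \mathrm{lin}(\varphi_{\SM}\circ s)\circ\Phi_M^{-1}$ is $R[p^{-1}]$-linear and satisfies $F(s)\circ\varphi_M = \varphi_{\SM}\circ s$. Using $q\circ\varphi_{\SM} = \varphi_M\circ q$, $q\circ s = \mathrm{id}$, and that $\varphi_M(M[p^{-1}])$ spans $M[p^{-1}]$ (as $\Phi_M$ is bijective), one checks $q\circ F(s) = \mathrm{id}$, so $F$ maps sections to sections and its fixed points are exactly the $\varphi$-compatible sections. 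Since any two sections differ by a map valued in $\ker q[p^{-1}] = u\SM[p^{-1}]$, the estimate above shows that $F(s_1) - F(s_2)$ is valued in $u^{kp}\SM[p^{-1}]$ whenever $s_1 - s_2$ is valued in $u^k\SM[p^{-1}]$; hence iterating $F$ from any starting section produces a sequence whose consecutive differences lie in $u^{p^n}\SM[p^{-1}]$, which converges, by $u$-adic completeness and separatedness of $\SM[p^{-1}]$ (finite free over the $u$-adically complete ring $S[p^{-1}]$), to a fixed point of $F$. Any two fixed points differ by a map with image in $\bigcap_n u^n\SM[p^{-1}] = 0$, giving the unique $\varphi$-compatible section $s$.

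Finally, $1\otimes s\colon S[p^{-1}]\otimes_{R[p^{-1}]}M[p^{-1}] \to \SM[p^{-1}]$ is a map of finite free $S[p^{-1}]$-modules of equal rank $\mathrm{rk}_R M = \mathrm{rk}_S\SM$ which, reduced modulo $uS[p^{-1}]$, becomes $q\circ s = \mathrm{id}_{M[p^{-1}]}$; hence $\det(1\otimes s)$ is a unit modulo $u$, and a $u$-adic geometric series argument over $S[p^{-1}]$ shows it is a unit, so $1\otimes s$ is an isomorphism. I expect the main obstacle to be the construction of $F$ and the verification that it preserves sections — this is exactly where crystallinity enters, via the invertibility of $\Phi_M$ over $R[p^{-1}]$ — together with pinning down the identification $I_0 = uS$ and the $u$-adic completeness and separatedness of $S[p^{-1}]$ that make the approximation converge to a genuine element rather than merely a formal one.
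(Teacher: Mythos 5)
The paper itself gives no proof of this lemma---it is quoted verbatim from \cite{Fcrystals} (Lemma 4.2)---so your proposal must stand on its own. Its skeleton is the right one and matches the standard argument (Breuil, and \cite{Fcrystals}): a contraction operator $F$ on sections whose fixed points are the $\varphi$-compatible ones. Your construction of $F$, the check that $F$ preserves sections, the invertibility of $\Phi_M$ from the $E$-height bound, and the uniqueness step are all fine. The gap is in the convergence, and it is genuine.

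First, a factual error that points at the real problem: $\ker q \neq u\SM$. The base-change property of divided power envelopes you invoke requires flatness of $\GS \to \GS/u\GS = R$, which fails here, and indeed $S/uS \not\cong R$: the ideal $I_0 = \ker(S \to R)$ contains every $\gamma_n(u) = u^n/n!$ with $n \geq 1$, and $\gamma_p(u) \notin uS$. Rationally this is harmless, since $\gamma_n(u) = \tfrac{1}{n}\,u\,\gamma_{n-1}(u)$ gives $I_0 S[p^{-1}] = uS[p^{-1}]$, but it signals that the relevant smallness of $\ker q$ is $p$-adic, coming from the divided powers, not $u$-adic. The serious issue is your appeal to ``$u$-adic completeness of $S[p^{-1}]$'': this is false. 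For example $\sum_n p^{-n}u^n$ is $u$-adically Cauchy in $S[p^{-1}]$, but its would-be limit $\sum_n p^{-n} n!\,\gamma_n(u)$ has unbounded denominators and lies in no $p^{-N}S$. Your iteration has exactly this shape: each application of $\Phi_M^{-1}$ can cost a factor $p^{-r}$, so $F^{n+1}(s_0) - F^n(s_0)$ is valued in $u^{p^n} p^{-N-cn}\SM$ with $c > 0$, and the $u$-adic estimate alone does not produce a limit in $\SM[p^{-1}]$. What actually makes the sequence converge---and what your argument never uses---is that $u^{p^n} = (p^n)!\,\gamma_{p^n}(u) \in p^{(p^n-1)/(p-1)}S$, so $\varphi^n(I_0) \subseteq p^{(p^n-1)/(p-1)}S$; this superlinear $p$-divisibility dominates the linear loss from $\Phi_M^{-n}$, and the differences tend to $0$ $p$-adically inside a fixed lattice $p^{-N}\SM$, which is $p$-adically complete. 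The same confusion undermines your final step: elements of $1 + uS[p^{-1}]$ need not be units (the inverse of $1 + p^{-1}u$ would again be $\sum_n (-1)^n p^{-n}u^n$), so ``determinant is a unit mod $u$ plus a geometric series'' does not close the argument; one must either control the denominators of $s$ so that $\det(1\otimes s) \in 1 + I_0$ up to a bounded factor and use that $I_0$ lies in the radical of the $p$-adically complete ring $S$, or use the $\varphi$-equivariance of $1 \otimes s$ together with the same $p$-adic contraction.
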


We first prove the following:

\begin{lemma}
\label{sectionstable}
The projection $q$ as in Lemma \ref{section} admits a unique $\varphi$-compatible section $$s: M \to \SM.$$

\begin{proof}
To attain this section we base change to the Shilov point obtaining the following commutative diagram: \newline

\adjustbox{scale=0.85,center}{%
  \begin{tikzcd}[scale=1]
    M \arrow[hook]{r} \arrow[hook]{d} & M[\frac{1}{p}]\ \arrow{r}{s} \arrow[hook]{d} & \SM[\frac{1}{p}] \arrow[hook]{d} \\
    M_g \arrow{r} & M_g[\frac{1}{p}] \arrow[hook]{r}{s \otimes R_g} & \SM_g[\frac{1}{p}] \\
  \end{tikzcd}
}

Note that $s \otimes R_g$ determines a section $M_g[\frac{1}{p}] \to \SM_g[\frac 1p]$. By \cite[Prop 3.2.3]{Gao} this section is unique and it takes $M_g$, and thus $M$, into $\SM_g$. Tracing $M$ through the top of the diagram, we see that the image of $M$ in $\SM_g[\frac 1p]$ is contained in $\SM[\frac 1p].$ 

From Lemma \ref{flatlemma}, we obtain:
\begin{align*}
\SM_g \cap \SM\left[\frac 1p\right] &= \SM \otimes \left(S_g \cap S\left[\frac 1p\right]\right) \\
&= \SM \otimes S.
\end{align*}

We can verify $S_g \cap S\left[\frac 1p\right] = S$ term by term. If we take $x$ in the intersection, we can write it as $$x = \sum \frac{E(u)^i}{i!}\sum a_{ij}u^j$$ with $a_{ij} \in W(k_g).$ We can also write it as $$x = \sum \frac{E(u)^i}{i!} \sum b_{ij} u^j$$ with $b_{ij} \in W(k)[\frac 1p].$ Comparing term-by-term we can see that $a_{ij} = b_{ij}.$ 
\end{proof}
\end{lemma}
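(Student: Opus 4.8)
The plan is to promote the rational section $s\colon M[p^{-1}]\to\SM[p^{-1}]$ furnished by Lemma \ref{section} to an \emph{integral} section $M\to\SM$, and to read off uniqueness from the rational statement. Uniqueness is automatic: any $\varphi$-compatible section $M\to\SM$ of $q$ becomes, after inverting $p$, a $\varphi$-compatible section of $q[p^{-1}]$, and there is only one such by Lemma \ref{section}. So the entire content is to show that $s$ carries the lattice $M\subset M[p^{-1}]$ into the lattice $\SM\subset\SM[p^{-1}]$, i.e.\ that applying $s$ to elements of $M$ introduces no denominators.

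First I would base change along the flat, $\varphi$-compatible embedding $b_g\colon R\hookrightarrow R_g=W(k_g)$ from Section \ref{ShilovPointBaseChange}, forming $M_g=M\otimes_R R_g$, $\SM_g=\SM\otimes_S S_g$, and the projection $q_g\colon\SM_g\to M_g$. Tensoring $s$ with $R_g$ yields a $\varphi$-compatible section $s\otimes R_g\colon M_g[p^{-1}]\to\SM_g[p^{-1}]$ of $q_g[p^{-1}]$. Over the classical base $W(k_g)$ this is exactly the setting of \cite[Prop.\ 3.2.3]{Gao}, which provides a unique section that is moreover integral, i.e.\ restricts to $M_g\to\SM_g$. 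Chasing the commutative square whose top row is $M\hookrightarrow M[p^{-1}]\xrightarrow{s}\SM[p^{-1}]$ and whose bottom row is $M_g\hookrightarrow M_g[p^{-1}]\xrightarrow{s\otimes R_g}\SM_g[p^{-1}]$, with the evident vertical base-change maps, the image of $M$ inside $\SM_g[p^{-1}]$ lands simultaneously in $\SM[p^{-1}]$ and in $\SM_g$ (noting $\SM[p^{-1}]\to\SM_g[p^{-1}]$ is injective since $S\hookrightarrow S_g$ is flat and $\SM$ is free over $S$ by Remark \ref{freeremark}).

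It then remains to identify $\SM_g\cap\SM[p^{-1}]$, formed inside $\SM_g[p^{-1}]$, with $\SM$. Since $\SM=\GM\otimes_{\GS,\varphi_{\GS}}S$ is finite free, hence flat, over $S$, Lemma \ref{flatlemma} reduces this to the ring-level identity $S_g\cap S[p^{-1}]=S$ inside $S_g[p^{-1}]$, which I would verify on the divided-power expansion: an element is uniquely of the form $\sum_i\frac{E(u)^i}{i!}\sum_{0\le j<\deg E}a_{ij}u^j$, membership in $S_g$ forces $a_{ij}\in W(k_g)$ and membership in $S[p^{-1}]$ forces $a_{ij}\in W(k)[p^{-1}]$, and $W(k_g)\cap W(k)[p^{-1}]=W(k)$ (because $k\hookrightarrow k_g$ gives $pW(k_g)\cap W(k)=pW(k)$ and $W(k_g)$ is $p$-torsion free). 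Hence $s$ restricts to $M\to\SM$, finishing the proof. The main obstacle is precisely this integrality upgrade: the section is a priori only rational, so some external input is needed to control denominators; the argument supplies it by transporting the question to the classical Shilov point $W(k_g)$, where Gao's theorem already gives an integral section, and then descending via the flat intersection $S_g\cap S[p^{-1}]=S$.
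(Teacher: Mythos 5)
Your proposal is correct and follows essentially the same route as the paper: base change to the Shilov point $R_g=W(k_g)$, invoke Gao's Proposition 3.2.3 to see the section is integral there, and then descend via Lemma \ref{flatlemma} and the term-by-term identification $S_g\cap S[p^{-1}]=S$ (where the coefficients should live in $R$ rather than $W(k)$, a slip the paper itself also makes). Your explicit remarks on uniqueness and on the injectivity of $\SM[p^{-1}]\to\SM_g[p^{-1}]$ are welcome but do not change the argument.
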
 

\begin{lemma}
\label{NakSec}
The map $1 \otimes s: S \otimes_{R} M \to \SM$ is an isomorphism.
\begin{proof}
Since $S \otimes_R M$ and $\SM$ are finite, free $S$-modules of the same rank, it suffices to show that $1 \otimes s$ is surjective. Moreover, $S$ is a local ring, so by Nakayama's Lemma it suffices to show that $1 \otimes s$ is surjective after reducing modulo the maximal ideal of $S$ which is the ideal generated by $p, t_1, \cdots, t_d$, $u$, and the divided powers $\frac{E(u)^i}{i!}$. After reducing modulo the maximal ideal, $1 \otimes s$ becomes the identity map which is obviously surjective. 
\end{proof}
\end{lemma}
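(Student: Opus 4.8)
The plan is to reduce the isomorphism claim to surjectivity of $1\otimes s$, and then to reduce that to a trivial check modulo the maximal ideal of $S$ via Nakayama's lemma.

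First I would record that both $S\otimes_R M$ and $\SM$ are finite free $S$-modules of the same rank. Indeed $\SM=\GM\otimes_{\GS,\varphi_{\GS}}S$ and $\GM$ is finite free over $\GS=R\llbracket u\rrbracket$ by Remark \ref{freeremark}, so $\SM$ is finite free over $S$ of rank $n\coloneqq\operatorname{rank}_{\GS}\GM$; similarly $\GM/u\GM$ is finite free of rank $n$ over $R$, hence so is $M=\GM/u\GM\otimes_{R,\varphi_{\GS}}R$, and therefore $S\otimes_R M$ is finite free over $S$ of rank $n$. Since a surjection between two finite free modules of the same finite rank over a commutative ring is automatically an isomorphism (write it in bases as a square matrix with a right inverse, so its determinant is a unit), it suffices to prove that $1\otimes s$ is surjective.

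Next I would observe that $S$ is a local ring. Because $S$ is $p$-adically complete and separated, $p$ lies in its Jacobson radical, so the maximal ideals of $S$ correspond to those of $S/pS$; and $S/pS$ is the divided power envelope of $(R/pR)\llbracket u\rrbracket$ along $(E(u))$, which modulo $p$ is $(u)$, so $S/pS$ is local (it is $k\llbracket t_1,\dots,t_d\rrbracket$ tensored over $k$ with the divided power polynomial algebra on one variable, both local $k$-algebras with residue field $k$). Thus $S$ is local, with maximal ideal $\mathfrak{m}_S$ generated by $p,t_1,\dots,t_d,u$ and the divided powers $\tfrac{E(u)^i}{i!}$. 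As $\SM$ is a finite $S$-module, Nakayama's lemma reduces surjectivity of $1\otimes s$ to surjectivity of its reduction modulo $\mathfrak{m}_S$.

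Finally, for the reduction modulo $\mathfrak{m}_S$: the ideal $I_0=\ker(S\to R,\ u\mapsto 0)$ is contained in $\mathfrak{m}_S$ (as $S/I_0\cong R$ is not a field), hence $I_0\SM\subseteq\mathfrak{m}_S\SM$ and the projection $q\colon\SM\to M=\SM/I_0\SM$ induces an isomorphism $\bar q\colon\SM/\mathfrak{m}_S\SM\xrightarrow{\sim}M/\mathfrak{m}_S M=M/\mathfrak{m}_R M$. Under the identification $(S\otimes_R M)/\mathfrak{m}_S(S\otimes_R M)\cong M/\mathfrak{m}_R M$, the reduction of $1\otimes s$ is just the reduction $\bar s$ of the section $s$; and since $q\circ s=\operatorname{id}_M$ yields $\bar q\circ\bar s=\operatorname{id}$ with $\bar q$ an isomorphism, $\bar s$ is an isomorphism, in particular surjective. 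This, together with the first two paragraphs, completes the argument. I do not expect a real obstacle here: the only points requiring a little care are verifying that $S$ is local and pinning down $\mathfrak{m}_S$ precisely enough for Nakayama to apply, and checking that $1\otimes s$ genuinely reduces to the ``identity'' section $\bar s$ modulo $\mathfrak{m}_S$ so that its surjectivity there is immediate.
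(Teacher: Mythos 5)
Your proof is correct and follows essentially the same route as the paper: reduce to surjectivity via equality of ranks, note $S$ is local, and apply Nakayama's lemma after checking the map becomes the identity modulo the maximal ideal. The extra details you supply (why the ranks agree, why $S$ is local, and the identification of the reduction with $\bar{s}$) are accurate elaborations of steps the paper leaves implicit.
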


We now define the filtration and $\varphi_i$ structure on $M$. 

\begin{definition}Realizing $M$ as a submodule of $\SM$ via the section $s$, we can define $$\Fil^i(M) \coloneqq \Fil^i\SM \cap M.$$ 
\end{definition}

\begin{definition}We set $\varphi_{M, i} = \varphi_{\SM, i}\big|_{M}$.\end{definition}

Note that the filtration on $\SM$ induces a filtration $\Fil^i(M)$ on $M$ for which the composite $\Fil^i(M) \to \Fil^{i-1}(M) \xrightarrow{\varphi_{M, i-1}} M$ is $p\varphi_{M, i}$. By the corresponding properties on $\Fil^i \SM$  (see Definition \ref{Breuildefinition}), it is easy to see that $\Fil^0M = M$ and $\Fil^{r+1} M = \{0\}.$  \newline

Thus $M$ has the structure of an object in $\mathrm{MF}_{\mathrm{big}}^{\mathrm{f}}(R)$. We rely on Proposition \ref{genericclosed} to show that we have an object of $\mathrm{MF}^{\mathrm{ff}}(R)$ in the following section.
\end{subsubsection}
\end{subsection}

\begin{subsection}{$M \otimes_R R_g \cong M_g$ as Fontaine-Laffaille modules}

We need to show that the filtration data on the tensor product $M 
\otimes_R R_g$ is compatible with the filtration on $M_g$. The difficulty in doing this is establishing a base change theorem for the Breuil module $\SM$. The ring $S_g$ is very large, so treating $\SM$ as an $R$-module and tensoring with $R_g$ over $R$ will not get us back $\SM_g$: we would need to $p$-adically complete $\SM \otimes_R R_g.$ The fact $S$ is non-Noetherian makes $p$-adic completion difficult to work with. \newline

Instead, we can work over the ring $S/\Fil^i(S)$ which is Noetherian when $i < p$. This will allow us to decompose $\Fil^i(\SM_g)$ and show that $\Fil^i(M_g)$ is contained in the nicer piece.

\begin{lemma}
\label{genericdecomp}
Let $i$ be a nonnegative integer less than $p$. Then we can decompose $\Fil^i(\SM_g)$ as follows: $$\Fil^i(\SM_g) = \Fil^i \SM \otimes_{R} R_g + \SM \otimes_{S} \Fil^i(S_g).$$
\begin{proof}
Another way to express the definition of $\Fil^i \SM$ is as a kernel in the exact sequence 
\[0 \to \Fil^i \SM \to \SM \to \left(S/\Fil^iS\right) \otimes_{\GS} \GM.\tag{1}\] 
It follows quickly from the definition of $\Fil^i\SM$ that $\SM \otimes_S \Fil^i(S)$ is contained in the kernel, so we get an exact sequence $$0 \to \Fil^i \SM/\left(\SM \otimes_S \Fil^i(S)\right) \to \SM/\left(\SM \otimes_S \Fil^i(S)\right) \to \left(S/\Fil^iS\right) \otimes_{\GS} \GM.$$
 We can now tensor with $R_g$ which maintains the exactness since $R_g$ is a flat $R$-module. As a further conesequence of the flatness of $R_g$, the tensor product commutes with quotients and we obtain the exact sequence: 

\[
0 \to \frac{\Fil^i \SM \otimes_R R_g}{\SM \otimes_S (\Fil^i(S) \otimes_R R_g)} \to \frac{\SM}{\SM \otimes_S (\Fil^i(S))} \otimes_R R_g \to \left( \left(S/\Fil^iS\right) \otimes_\GS \GM \right)\otimes_R R_g \tag{2}
\]

We also have on the Shilov point the exact sequence $$0 \to \Fil^i\SM_g \to \SM_g \to (S_g/\Fil^iS_g) \otimes_{\GS_g} \GM_g$$ where the rightmost map restricts to the rightmost map of exact sequence $(1)$ via the embedding $\SM \hookrightarrow \SM_g$. We can also see that $\SM \otimes_S \Fil^i S_g$ is in the kernel, so this gives an exact sequence \[0 \to \frac{\Fil^i\SM_g}{\left(\SM \otimes_S \Fil^i S_g\right)} \to \frac{\SM_g}{\left(\SM \otimes_S \Fil^i S_g\right)} \to (S_g/\Fil^iS_g) \otimes_{\GS_g} \GM_g.\tag{3}\] 

Now note that $$\frac{\SM_g}{\SM \otimes_S \Fil^iS_g} \cong \frac{\SM \otimes_S S_g}{\SM \otimes_S \Fil^iS_g} \cong \SM \otimes_S \frac{S_g}{\Fil^iS_g}$$ since $\SM$ is a finite, free $S$-module. Also, $$\frac{\SM}{\SM \otimes_S \Fil^iS} \otimes_R R_g\cong \SM \otimes \frac{S}{\Fil^i(S)} \otimes_R R_g.$$ Since $S/\Fil^iS$ is a finite, free $R$-module and $S_g/\Fil^iS_g$ is a finite, free-$R_g$ module of the same rank, the rightmost maps of $(2)$ and $(3)$ are the same with the same domain. Thus the kernels must be the same. \newline

Since $\Fil^iS \otimes_R R_g \subset \Fil^i(S_g)$, we obtain that an element in $\Fil^i \SM_g$ is determined by an element of $\Fil^i\SM \otimes_R R_g$ up to an element in $\SM \otimes_S \Fil^iS_g$, completing the proof of the lemma. 
\end{proof} 
\end{lemma}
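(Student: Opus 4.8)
The plan is to realise both $\Fil^i(\SM_g)$ and $\Fil^i(\SM)\otimes_R R_g$ as the kernel of a single ``reduction modulo $\Fil^i$'' map and then to read off the claimed equality. Two ingredients make this work: the flatness of $R\hookrightarrow R_g$, and the fact that \emph{because $i<p$} the quotient $S/\Fil^iS$ is a finite free $R$-module with $(S/\Fil^iS)\otimes_R R_g\cong S_g/\Fil^iS_g$ (both are free on the images of $\{E(u)^j u^\ell/j!\}_{0\le j<i,\ 0\le\ell<\deg E}$, and $E(u)\in W(k)[u]$ is the same polynomial on the two sides). Passing to these Noetherian quotients is precisely what sidesteps the fact that $S$, $S_g$ are non-Noetherian and that $\SM_g=\SM\otimes_S S_g$ is not literally $\SM\otimes_R R_g$.

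First I would dispose of $\supseteq$. Using the $S$-linearity of $1\otimes\varphi_{\GM}$ together with the kernel presentation of $\Fil^\bullet$ from Lemma \ref{FilinS}, one sees $\Fil^i(S_g)\cdot\SM_g\subseteq\Fil^i\SM_g$; since $\SM$ is $S$-free this is exactly $\SM\otimes_S\Fil^i(S_g)\subseteq\Fil^i\SM_g$. And $\Fil^i\SM\otimes_R R_g\subseteq\Fil^i\SM_g$ because the map $\SM_g\to(S_g/\Fil^iS_g)\otimes_{\GS_g}\GM_g$ with kernel $\Fil^i\SM_g$ restricts along $\SM\hookrightarrow\SM_g$ to the map $\SM\to(S/\Fil^iS)\otimes_{\GS}\GM$ with kernel $\Fil^i\SM$, and $\Fil^i\SM_g$ is an $R_g$-submodule. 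Hence the right-hand side of the asserted identity is contained in the left.

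For $\subseteq$, write $\Fil^i\SM=\ker\!\big(\SM\to(S/\Fil^iS)\otimes_{\GS}\GM\big)$; divide out the submodule $\SM\otimes_S\Fil^i(S)$ (which lies in this kernel) and apply the exact functor $-\otimes_R R_g$, obtaining a left-exact sequence with first term $\big(\Fil^i\SM\otimes_R R_g\big)\big/\big(\SM\otimes_S\Fil^i(S)\otimes_R R_g\big)$ and middle term $\SM\otimes_S(S/\Fil^iS)\otimes_R R_g$. Run the same procedure at the Shilov point, starting from $\Fil^i\SM_g=\ker\!\big(\SM_g\to(S_g/\Fil^iS_g)\otimes_{\GS_g}\GM_g\big)$ and dividing by $\SM\otimes_S\Fil^i(S_g)$: the first term is $\Fil^i\SM_g\big/\big(\SM\otimes_S\Fil^i(S_g)\big)$ and the middle term is $\SM\otimes_S(S_g/\Fil^iS_g)$. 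The freeness facts recorded above identify the two middle terms, identify the two targets $\big((S/\Fil^iS)\otimes_{\GS}\GM\big)\otimes_R R_g\cong(S_g/\Fil^iS_g)\otimes_{\GS_g}\GM_g$ (also using that $\GM$ is $\GS$-free of the same rank as $\GM_g$, cf.\ Remark \ref{freeremark}), and make the two maps out of the middle term coincide. Therefore the two first terms, being kernels of one and the same map, agree as submodules of $\SM\otimes_S(S_g/\Fil^iS_g)=\SM_g/(\SM\otimes_S\Fil^i(S_g))$. Taking preimages along $\SM_g\twoheadrightarrow\SM_g/(\SM\otimes_S\Fil^i(S_g))$ and using $\Fil^i(S)\otimes_R R_g\subseteq\Fil^i(S_g)$ then yields $\Fil^i\SM_g=\Fil^i\SM\otimes_R R_g+\SM\otimes_S\Fil^i(S_g)$, as desired.

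The one genuinely delicate step is the comparison in the middle: verifying that the reduction map out of $\SM\otimes_S(S/\Fil^iS)\otimes_R R_g$ is computed the same way whether one works over $R$ and base-changes or works over $R_g$ directly. This is exactly where the hypothesis $i<p$ enters --- it is what makes $S/\Fil^iS$ finite free over $R$, so that the map and its target behave well under $-\otimes_R R_g$ --- together with the matching of the $R$-rank of $S/\Fil^iS$ and the $R_g$-rank of $S_g/\Fil^iS_g$. The remaining points (the two containments, the flatness of $R_g$, commuting the base change past the relevant quotients) are routine.
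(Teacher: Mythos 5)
Your proposal is correct and follows essentially the same route as the paper: realize both filtrations as kernels of the reduction map to $(S/\Fil^iS)\otimes_{\GS}\GM$ (resp.\ its Shilov-point analogue), quotient by $\SM\otimes_S\Fil^i(S)$ (resp.\ $\SM\otimes_S\Fil^i(S_g)$), use flatness of $R_g$ and the finite freeness of $S/\Fil^iS$ for $i<p$ to identify the resulting middle terms and maps, and conclude the kernels coincide. Your explicit treatment of the $\supseteq$ containment and of the identification $(S/\Fil^iS)\otimes_R R_g\cong S_g/\Fil^iS_g$ only makes explicit what the paper leaves implicit.
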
 

We now want to use this decomposition to show that elements of $\Fil^i(M_g)$ cannot arise containing any $\Fil^i(S_g) \otimes_S \SM$ components. Formally, we have the following lemma: 
\begin{lemma}
\label{genericcontainment}
$$\Fil^i(M_g) \subset \Fil^i\SM \otimes_R R_g$$
\begin{proof}
Let $x \in \Fil^i(M_g)$, which we know to be a subset of $\Fil^i(\SM_g).$ This means by Lemma \ref{genericdecomp} we can write $x = y + z$ with $y \in \Fil^i(S_g) \otimes_S \SM$ and $z \in \Fil^i\SM \otimes_R R_g$. Since  $\Fil^i(M_g) \subset M_g \cong M \otimes_R R_g$, we have that $y = z-x \in \SM \otimes_R R_g.$ We claim $y \in (\Fil^i(S) \otimes_S \SM) \otimes_R R_g \subset \Fil^i \SM \otimes_R R_g.$ \newline

Let $e_1, e_2, \cdots , e_n$ be an $S$-basis of $\SM.$ Since $y \in \Fil^i(S_g) \otimes_S \SM$, we can write $$y = \sum_{m=1}^n s_me_m$$ with $s_m \in \Fil^i(S_g).$ Since $y \in \SM \otimes_R R_g$, we can write 
\begin{align*}
y &= \sum_{j=1}^\ell \left[\sum_{m=1}^n \left(\sum_{k=1}^\infty a_{k, m, j} \frac{E(u)^k}{k!} \right)e_m \right] \otimes r_j \\
&= \sum_{m=1}^n \left[\sum_{j=1}^\ell \sum_{k=1}^\infty \left((a_{k, m, j} \otimes r_j) \frac{E(u)^k}{k!}\right) \right] e_m \\
&= \sum_{m=1}^n \left[\sum_{k=1}^\infty \left(\sum_{j=1}^\ell  (a_{k, m, j} \otimes r_j)  \right)\frac{E(u)^k}{k!} \right]e_m \\
\end{align*}

with $a_{k, m, j} \in R[u]$ and $r_j \in R_g.$ Thus,  $$s_m = \sum_{k=1}^\infty \left(\sum_{j=1}^\ell  (a_{k, m, j} \otimes r_j)  \right)\frac{E(u)^k}{k!}.$$ Then since $s_m \in \Fil^i(S_g)$, we must have $$\left(\sum_{j=1}^\ell  (a_{k, m, j} \otimes r_j)  \right) = 0$$ for $k < i,$ and then it is clear that $y \in (\Fil ^i(S) \otimes_S \SM) \otimes_R R_g \subset \Fil^i \SM \otimes_R R_g$ and the lemma is proven.
\end{proof}
\end{lemma}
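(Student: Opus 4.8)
The plan is to use the decomposition of $\Fil^i(\SM_g)$ supplied by Lemma \ref{genericdecomp} together with the observation that $\Fil^i(M_g)$ lives inside the ``thin'' submodule $M_g\cong M\otimes_R R_g$ of $\SM_g$, which only meets the $\SM\otimes_R R_g$-direction and not the $\SM\otimes_S\Fil^i(S_g)$-direction. First I would set up the ambient picture: realize $M\hookrightarrow\SM$ via the unique $\varphi$-compatible section $s$ of Lemma \ref{sectionstable}, and after the flat base change $R\to R_g$ get embeddings $M_g=M\otimes_R R_g\hookrightarrow\SM\otimes_R R_g\hookrightarrow\SM_g\cong\SM\otimes_S S_g$, the last identification being Theorem \ref{Kisinbasechange}. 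Since $M_g$, as a module, is a classical Fontaine-Laffaille module over $W(k_g)$ arising from a crystalline representation, the comparison of filtrations behind Gao's equivalence (\cite[Theorem 1.1]{Gao}) gives $\Fil^i(M_g)=\Fil^i(\SM_g)\cap M_g$; in particular $\Fil^i(M_g)\subseteq\Fil^i(\SM_g)$.

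Next comes the decomposition step. Take $x\in\Fil^i(M_g)\subseteq\Fil^i(\SM_g)$. As $i\le r\le p-2<p$, Lemma \ref{genericdecomp} lets us write $x=y+z$ with $y\in\SM\otimes_S\Fil^i(S_g)$ and $z\in\Fil^i\SM\otimes_R R_g$, so it suffices to prove $y\in\Fil^i\SM\otimes_R R_g$. Since $x\in M_g\subseteq\SM\otimes_R R_g$ and $z\in\SM\otimes_R R_g$, we get $y=x-z\in\SM\otimes_R R_g$. Thus $y$ lies in the intersection $(\SM\otimes_S\Fil^i(S_g))\cap(\SM\otimes_R R_g)$ taken inside $\SM_g$, and it remains to identify this intersection with $(\Fil^i(S)\otimes_S\SM)\otimes_R R_g$, which is visibly contained in $\Fil^i\SM\otimes_R R_g$.

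For the last identification I would fix an $S$-basis $e_1,\dots,e_n$ of $\SM$, write $y=\sum_m s_m e_m$ with $s_m\in\Fil^i(S_g)$ on one side, and expand the $\SM\otimes_R R_g$-expression of $y$ in the same basis on the other; then each $s_m$ becomes an $\{E(u)^k/k!\}_{k\ge 0}$-series whose coefficients come from $R[u]\otimes_R R_g=R_g[u]$ (after reducing modulo the monic polynomial $E(u)$ to bring degrees below $\deg E$, a routine move). The hypothesis $s_m\in\Fil^i(S_g)$ then forces, by a term-by-term comparison of $\{E(u)^k/k!\}$-coordinates exactly as in the verification $S_g\cap S[1/p]=S$ used in Lemma \ref{sectionstable}, that the $k<i$ parts of $s_m$ vanish, i.e.\ $s_m\in\Fil^i(S)\otimes_R R_g$; hence $y\in(\Fil^i(S)\otimes_S\SM)\otimes_R R_g\subseteq\Fil^i\SM\otimes_R R_g$ and $x=y+z\in\Fil^i\SM\otimes_R R_g$. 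The one genuinely delicate point — the main obstacle — is making this term-by-term comparison rigorous: one must ensure the $\{E(u)^k/k!\}$-coordinates of elements of $\SM\otimes_R R_g\subset\SM_g$ are unambiguous, so that membership in $\Fil^i(S_g)$ can be read off coefficientwise. This amounts to the relative statement $\Fil^i(S_g)\cap(S\otimes_R R_g)=\Fil^i(S)\otimes_R R_g$ for $i<p$, which I would deduce from the flatness of $R_g$ over $R$ together with the explicit $p$-adic descriptions of $S$ and $S_g$, reducing it to the absolute term-by-term fact already invoked elsewhere.
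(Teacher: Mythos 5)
Your proposal is correct and follows essentially the same route as the paper: apply Lemma \ref{genericdecomp} to split $x=y+z$, observe $y\in\SM\otimes_R R_g$, and then compare $\{E(u)^k/k!\}$-coefficients in an $S$-basis of $\SM$ to force $y\in(\Fil^i(S)\otimes_S\SM)\otimes_R R_g$. Your extra care in isolating the key point as the identity $\Fil^i(S_g)\cap(S\otimes_R R_g)=\Fil^i(S)\otimes_R R_g$ is a reasonable sharpening of the paper's term-by-term argument, not a different method.
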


\begin{theorem} 
\label{genericfil}
$\Fil^i(M_g) \cong \Fil^i(M) \otimes_R R_g.$ 
\begin{proof}
We know that $\Fil^i(M_g) \subset M \otimes_R R_g \cong M_g$ and, by Lemma \ref{genericcontainment}, $\Fil^i(M_g) \subset \Fil^i \SM \otimes_R R_g.$ By Lemma \ref{flatlemma}, we have:
\begin{align*}
\Fil^i(M_g) &\subset \left(\Fil^i \SM \otimes_R R_g\right) \cap \left( M \otimes_R R_g \right) \\
&= \left(\Fil^i \SM \cap M\right) \otimes_R R_g \\
& = \Fil^i(M) \otimes_R R_g.
\end{align*}

The inclusion in the other direction is easy to establish.  Clearly $\Fil^i(M) \otimes_R R_g$ is inside $M_g$ and $\Fil^i(\SM) \otimes_R R_g$ since $\Fil^i(M)$ is a subset of $\Fil^i(\SM)$ by definition, $\Fil^i(M) \subset M$ and $R_g$ is a flat $R$-module. Thus we have:  
\begin{align*}
\Fil^i(M) \otimes_R R_g \subset (\Fil^i(\SM) \otimes_R R_g) \cap M_g.
\end{align*}
But $\Fil^i(\SM) \otimes_R R_g$ as a subset of $\SM_g$ is in the kernel of $1 \otimes \varphi: \SM_g \to S_g/\Fil^iS_g \otimes_{\GS_g} \GM_g$, so we have $$\Fil^i(M) \otimes_R R_g \subset (\Fil^i(\SM_g)) \cap M_g = \Fil^i(M_g).$$

\end{proof}
\end{theorem}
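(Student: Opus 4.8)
The plan is to prove the two inclusions $\Fil^i(M_g) \subseteq \Fil^i(M)\otimes_R R_g$ and $\Fil^i(M)\otimes_R R_g \subseteq \Fil^i(M_g)$ separately, with the essential input being the decomposition $\Fil^i(\SM_g) = \Fil^i\SM\otimes_R R_g + \SM\otimes_S \Fil^i(S_g)$ of Lemma \ref{genericdecomp} and its refinement Lemma \ref{genericcontainment}; together with flatness of $R_g$ over $R$ these reduce the statement to an intersection computation inside $\SM_g$, handled via Lemma \ref{flatlemma}.

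For $\Fil^i(M_g)\subseteq \Fil^i(M)\otimes_R R_g$: Lemma \ref{genericcontainment} places $\Fil^i(M_g)$ inside $\Fil^i\SM\otimes_R R_g$, while by construction $\Fil^i(M_g)$ lies in $M_g\cong M\otimes_R R_g$ (here $M_g$ is the classical Fontaine--Laffaille module attached to $\Lambda|_{G_{R_g}}$ by Theorem \ref{Kisinbasechange} and Proposition \ref{quasiKisinclosed}, with $M$ realized inside $\SM$ via the section of Lemma \ref{sectionstable}). Since $R_g$ is flat over $R$ and $\Fil^i\SM$ and $M$ are submodules of $\SM$, Lemma \ref{flatlemma} yields
$$(\Fil^i\SM\otimes_R R_g)\cap(M\otimes_R R_g) = (\Fil^i\SM\cap M)\otimes_R R_g = \Fil^i(M)\otimes_R R_g,$$
which therefore contains $\Fil^i(M_g)$.

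For the reverse inclusion: $\Fil^i(M)\otimes_R R_g \subseteq M_g$ is immediate from $\Fil^i(M)\subseteq M$ and flatness, and $\Fil^i(M)\subseteq\Fil^i(\SM)$ by definition, so it suffices to know that $\Fil^i(\SM)\otimes_R R_g\subseteq\Fil^i(\SM_g)$ inside $\SM_g$ and that $\Fil^i(M_g)=\Fil^i(\SM_g)\cap M_g$. The first follows from the characterization in Lemma \ref{FilinS}: an element of $\Fil^i(\SM)$ maps under $1\otimes\varphi_{\GM}$ into $\Fil^i(S)\otimes_{\GS}\GM\subseteq\Fil^i(S_g)\otimes_{\GS_g}\GM_g$, compatibly with the base change $\SM\to\SM_g$. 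The second is the classical statement that the Breuil module $\SM_g$ attached to the Kisin/Fontaine--Laffaille module $M_g$ recovers the filtration of $M_g$ as $\Fil^i(\SM_g)\cap M_g$ through its section; I would cite Gao's explicit quasi-inverse in \cite{Gao} for this. Combining, $\Fil^i(M)\otimes_R R_g\subseteq\Fil^i(\SM_g)\cap M_g=\Fil^i(M_g)$.

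The genuinely delicate point is not in the theorem but in the lemmas it invokes: the decomposition of $\Fil^i(\SM_g)$ requires passing to the Noetherian quotient $S/\Fil^iS$ (legitimate since $i<p$) so that the defining exact sequence of $\Fil^i\SM$ can be base changed along the flat but infinitely generated map $R\to R_g$ without having to $p$-adically complete $\SM\otimes_R R_g$, which is the source of difficulty because $S$ is non-Noetherian. Granting Lemmas \ref{genericdecomp} and \ref{genericcontainment}, the theorem is formal; the one step I would take care to verify explicitly is that the filtration on $M_g$ coming from the classical Breuil--Kisin--Fontaine--Laffaille comparison coincides with the one cut out inside $\SM_g$, since the whole argument hinges on that identification.
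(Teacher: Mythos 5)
Your proposal is correct and follows essentially the same route as the paper's proof: the forward inclusion via Lemma \ref{genericcontainment} together with the flat intersection identity of Lemma \ref{flatlemma}, and the reverse inclusion by placing $\Fil^i(M)\otimes_R R_g$ inside both $M_g$ and $\Fil^i(\SM_g)$ using the $1\otimes\varphi_{\GM}$ characterization of the Breuil filtration. The one point you flag for extra care — that $\Fil^i(M_g)=\Fil^i(\SM_g)\cap M_g$ — is used by the paper in exactly the same way, as the defining description of the classical filtration on $M_g$ through its section.
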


Now that we have shown the compatibility of the filtrations, $M \otimes_R R_g$ has the full Fontaine-Laffaille module structure of $M_g$. We thus conclude:

\begin{theorem}
\label{genericFL}
$M \otimes_R R_g$ is a Fontaine-Laffaille module over $R_g$, and $M \otimes_R R_g \cong M_g$ as Fontaine-Laffaille modules. 
\begin{proof}
We established in Theorem \ref{Kisinbasechange} an isomorphism $M \otimes_R R_g \cong M_g$ compatible with Frobenius. We established in Theorem \ref{genericfil} that $\Fil^i(M) \otimes_R R_g \cong \Fil^i(M_g).$ Since $M_g$ carries the structure of a Fontaine-Laffaile module, we have completed the proof of this theorem.
\end{proof}
\end{theorem}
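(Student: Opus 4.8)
The plan is to combine the results already established in this subsection and match the Fontaine--Laffaille data term by term. First I would dispose of the underlying module with its Frobenius: Theorem~\ref{Kisinbasechange} together with Corollary~\ref{quasiKisinclosed} identifies the base change of the Kisin module $\GM$ along $R \hookrightarrow R_g$ with the Kisin module attached to $\Lambda|_{G_{R_g}}$, and hence yields a $\varphi$-compatible isomorphism $M \otimes_R R_g \cong M_g$ of $W(k_g)$-modules, where $M_g \coloneqq M \otimes_R R_g$ is the classical Fontaine--Laffaille module of that lattice. (This uses that $\GM$ is free, Remark~\ref{freeremark}, and $r \le p-2$.) Since $k_g$ is perfect, $\widehat{\Omega}_{R_g} = 0$, so there is no connection to match and it remains only to compare the filtration and the $\varphi_i$.

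Next I would appeal directly to Theorem~\ref{genericfil}, which supplies the isomorphism $\Fil^i(M_g) \cong \Fil^i(M) \otimes_R R_g$ of submodules of $M \otimes_R R_g \cong M_g$. Combined with the definition $\varphi_{M,i} = \varphi_{\SM,i}|_M$, whose formation commutes with the flat base change $-\otimes_R R_g$ because the section $s$ of Lemma~\ref{sectionstable}, the filtration on $\SM$, and the maps $\varphi_{\SM,i}$ are all built from base-change-stable data, this shows that the triple $(M \otimes_R R_g, \Fil^i, \varphi_i)$ coincides with $(M_g, \Fil^i, \varphi_i)$. Since $M_g$ is by construction an object of $\mathrm{MF}^{\mathrm{ff}, [0,r]}(W(k_g))$ in the sense of Definition~\ref{FLdefClassic} --- in particular each $\Fil^{i+1}$ is a direct summand of $\Fil^i$ and the $\varphi_i(\Fil^i)$ generate --- the same holds for $M \otimes_R R_g$, and the isomorphism is one of Fontaine--Laffaille modules.

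So the content has all been front-loaded, and in the write-up I would keep this statement to a couple of sentences citing Theorems~\ref{Kisinbasechange} and~\ref{genericfil}. The genuine obstacle lies in Theorem~\ref{genericfil}, and more precisely in Lemma~\ref{genericcontainment}: because $S$ is non-Noetherian one cannot base change $\SM$ naively to $\SM_g$, so one must pass to the Noetherian quotients $S/\Fil^i S$ (legitimate for $i < p$), obtain the decomposition $\Fil^i(\SM_g) = \Fil^i\SM \otimes_R R_g + \SM \otimes_S \Fil^i(S_g)$ of Lemma~\ref{genericdecomp}, and then rule out, by a term-by-term comparison of divided-power expansions in $E(u)$, any nonzero $\SM \otimes_S \Fil^i(S_g)$ contribution inside $M_g$. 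Once that is granted, the present theorem is a short assembly.
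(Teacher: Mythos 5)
Your proposal is correct and follows essentially the same route as the paper: the theorem is a short assembly of the $\varphi$-compatible module isomorphism from Theorem~\ref{Kisinbasechange} and the filtration compatibility of Theorem~\ref{genericfil}, with $M_g$ already known to be a classical Fontaine--Laffaille module. You have also correctly located where the real work lives, namely in Lemma~\ref{genericcontainment} and the decomposition of Lemma~\ref{genericdecomp}.
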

\end{subsection}
\begin{subsection}{$M \otimes_R W(k) \cong M_0$ as Fontaine-Laffaille Modules}

The strategy in this section is similar to the strategy in the above section where we analyze the Breuil module filtration to understand the Fontaine-Laffaille module filtration, but we have to deal with the additional concern that $W(k)$ is not a flat $R$-module, which means analyzing various terms of the form  $\mathrm{Tor}_1^R( -, W(k))$. These Tor terms aren't too concerning because the barrier to $W(k)$ being a flat $R$-module is, effectively, $t_i$-torsion, which does not show up in most of our modules.  \newline

\begin{lemma} 
\label{TorLemma} We have (for $i < p$):\newline
(a) $\Tor_1^{R}(S/R, W(k)) = 0$ \newline
(b) $\Tor_1^{R}(\SM/M, W(k)) = 0$ \newline
(c) $\Tor_1^{R}(S/\Fil^iS \otimes_{\GS} \GM, W(k)) = 0$\newline
(d) $\mathrm{Tor}_1^{R}(\SM/\Fil^i(\SM), W(k)) = 0$ \newline 
\begin{proof} 
For (a), we compute $\mathrm{Tor}^R_1(S/R, W(k)) = 0$ by recognizing that $$0 \rightarrow R \rightarrow S \rightarrow S/R \rightarrow 0$$ is a free resolution of $S/R$ since $S$ is a free $R$-module. When tensoring with $W(k)$, it is clear that the first map remains injective, so indeed $\mathrm{Tor}^R_1(S/R, W(k)) = 0$.  \newline 

For (b), by Lemma \ref{NakSec}, we have that $\SM/M$ is a direct sum of finitely many copies of $S/R$, so $\mathrm{Tor}^R_1(\SM/M, W(k)) = 0$ by part (a). \newline

For part (c), we observe that $S/\Fil^iS$ is (for $i < p$) a free $R$-module and hence a free $W(k)$-module. We also know $\GM$ is a free $\GS$-module, giving us our conclusion.\newline

For Part (d), we have an exact sequence $$0 \rightarrow \Fil^i(\SM) \hookrightarrow \SM \rightarrow S/\Fil^i(S) \otimes_{\GS} \GM.$$ Below in Lemma \ref{cokernelf}, we justify that the cokernel of this map 
 \newline$\SM \to S/\Fil^i(S) \otimes_{\GS}\GM$ is a finite free $R$-module, hence a free $W(k)$-module.  \newline

 Tensoring the induced sequence $$0 \to \SM/\Fil^i(\SM) \to S/\Fil^i(S) \otimes_{\GS} \GM$$ with $W(k)$ then applying Lemma \ref{cokernelf} and part (c) gives us the result. \newline
\end{proof}
\end{lemma}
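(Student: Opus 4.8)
The plan is to deduce all four parts from two facts already in hand --- that $S=S_R$ is a free $R$-module on which $1$ extends to an $R$-basis, and that the Kisin module $\GM$ is a \emph{free} $\GS$-module by Remark~\ref{freeremark} --- together with the standard computation that $S/\Fil^i S$ is a finite free $R$-module when $i<p$. With these, none of the four $\Tor$-groups requires actually resolving the module in question; each follows formally.

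For (a), I would use the two-term free resolution $0\to R\to S\to S/R\to 0$. Because $1\in S$ is part of an $R$-basis of $S$, the inclusion $R\hookrightarrow S$ is a split $R$-linear injection, so $S\cong R\oplus(S/R)$ and $S/R$ is itself a free $R$-module; hence $\Tor_1^R(S/R,W(k))=0$ (equivalently, $R\otimes_R W(k)\to S\otimes_R W(k)$ is a split, hence injective, map). For (b), observe first that $M=\SM/I_0\SM\cong\GM/u\GM\otimes_{R,\varphi_{\GS}}R$ (Theorem~\ref{MisFL}) is a finite free $R$-module, being the base change of the finite free $R$-module $\GM/u\GM$ along the ring map $\varphi_R\colon R\to R$. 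By Lemma~\ref{NakSec} the isomorphism $1\otimes s\colon S\otimes_R M\xrightarrow{\ \sim\ }\SM$ identifies $s(M)$ with $1\otimes M$, so $\SM/M\cong(S/R)\otimes_R M$, a finite direct sum of copies of $S/R$; part (b) then follows from (a).

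For (c), since $\GM\cong\GS^{\,m}$ is free of finite rank over $\GS$, the structure map $\GS\to S\to S/\Fil^i S$ gives $S/\Fil^i S\otimes_{\GS}\GM\cong(S/\Fil^i S)^{\,m}$ as $R$-modules, and for $i<p$ this is finite free over $R$ because $S/\Fil^i S$ has $R$-basis the $u^a E(u)^b$ with $0\le a<\deg E$ and $0\le b<i$ (the divided powers $E(u)^b/b!$ with $b<p$ having only unit denominators). A finite free $R$-module is flat, so the $\Tor_1$ vanishes. For (d), I would feed the long exact $\Tor$-sequence rather than resolve directly: the exact sequence $0\to\Fil^i\SM\to\SM\to S/\Fil^i S\otimes_{\GS}\GM$ together with Lemma~\ref{cokernelf}, which identifies the cokernel $C$ of the last map as a finite free $R$-module, yields a short exact sequence $0\to\SM/\Fil^i\SM\to S/\Fil^i S\otimes_{\GS}\GM\to C\to 0$. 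Applying $-\otimes_R W(k)$ and using $\Tor_2^R(C,W(k))=\Tor_1^R(C,W(k))=0$, the long exact sequence gives an injection $\Tor_1^R(\SM/\Fil^i\SM,W(k))\hookrightarrow\Tor_1^R(S/\Fil^i S\otimes_{\GS}\GM,W(k))$, and the right-hand side is $0$ by (c).

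The only non-formal ingredient is the freeness of the cokernel in Lemma~\ref{cokernelf}, which is established separately below; granting it, the present lemma presents no real obstacle, the single point to keep in mind being to route part (d) through the long exact $\Tor$-sequence of the short exact sequence with free cokernel rather than attempting to analyze $\SM/\Fil^i\SM$ or $\Fil^i\SM$ on its own.
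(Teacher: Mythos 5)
Your proposal is correct and follows essentially the same route as the paper: (a) via the freeness/splitness of $S$ over $R$, (b) via Lemma \ref{NakSec} identifying $\SM/M$ with finitely many copies of $S/R$, (c) via the finite freeness of $S/\Fil^i S$ over $R$ together with the freeness of $\GM$ over $\GS$, and (d) by feeding the short exact sequence $0 \to \SM/\Fil^i\SM \to S/\Fil^iS \otimes_{\GS}\GM \to \mathrm{coker}(f) \to 0$, with $\mathrm{coker}(f)$ finite free by Lemma \ref{cokernelf}, into the long exact $\Tor$-sequence. The only differences are cosmetic (you make the splitting in (a) and the long exact sequence in (d) explicit where the paper leaves them implicit).
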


The following lemma completes the proof of (c):

\begin{lemma}
\label{cokernelf}
Let $f: \SM \to S/\Fil^iS \otimes_{\GS} \GM$ be the map induced by $1 \otimes \varphi_{\GM}: \SM = S \otimes_{\varphi, \GS} \GM \to S \otimes_{\GS} \GM.$ Then $\mathrm{coker}(f)$ is a finite, free $R$-module. 
\begin{proof}
Write $\varphi^* \GM$ for $\GS \otimes_{\varphi, \GS} \GM$. We have an exact sequence $$0 \to \varphi^* \GM \to \GM \to \GM/\varphi^*\GM \to 0$$ Tensoring with $S$, we obtain $$S \otimes_{\varphi, \GS} \GM \to S \otimes_{\GS} \GM \to S \otimes_{\GS} \GM/\varphi^*\GM \to 0$$ where the first map is $1 \otimes \varphi_{\GM}.$ Then we obtain the following diagram projecting to the  $S/\Fil^i(S)$ level:

\adjustbox{scale=0.85,center}{%
  \begin{tikzcd}[scale=1]
    S \otimes_{\varphi, \GS} \GM \arrow{r}{1 \otimes \varphi_{\GM}} \arrow{dr}{f} & S \otimes_{\GS} \GM \arrow{r} \arrow{d} & S \otimes_{\GS} \GM/\varphi^*\GM \arrow{r} \arrow{d} & 0 \\
    & S/\Fil^iS \otimes_{\GS} \GM \arrow{r}{(*)} \arrow{d}& S/\Fil^iS \otimes_{\GS} \GM/\varphi^*\GM \arrow{d}& \\
    & 0 & 0 &
  \end{tikzcd}
} 

\noindent from which we conclude that $(*)$ is surjective and that $\mathrm{coker}(f) = S/\Fil^iS \otimes_{\GS} \GM/\varphi^*\GM.$ By Lemma \ref{quasiKisinclosed} and the proof of Lemma \ref{TorLemma}(c) which justifies that $\Tor_1^R(S/\Fil^iS, W(k)) = 0$ and allows us to commute the base change with the quotient, $\mathrm{coker}(f) \otimes_R W(k) \cong S_0/\Fil^iS_0 \otimes_{\GS_0} \GM_0/\varphi^*\GM_0.$ \newline

By tensoring $0 \to \varphi^*\GM_0 \to \GM_0 \to \GM_0/\varphi^*\GM \to 0$ with $S_0$, we find that $S_0/\Fil^iS_0 \otimes_{\GS_0} \GM_0/\varphi^*\GM_0$ is the cokernel of $f_0: \SM_0 \to S_0/\Fil^iS_0 \otimes_{\GS} \GM$ induced by $1 \otimes \varphi_{\GM_0}$. If $i < p$, by \cite[Theorem 4.20]{BuzzardDiamondJarvis} we know that $S_0/\Fil^iS_0 = W(k)[u]/E(u)^i$ and $\GM_0/\varphi^*\GM_0 = \displaystyle \oplus_i W(k)[u]/E(u)^{h_i}$ where $h_i$ are the Hodge-Tate weights from which it is clear that $S_0/\Fil^iS_0 \otimes_{\GS_0} \GM/\varphi^* \GM = \mathrm{coker}(f_0)$ is a finite free $W(k)$-module of rank, say, $m$. \newline 

Let $e_1, \cdots , e_m$ be a basis of $\mathrm{coker}(f_0)$ as a $W(k)$-module. By Nakayama's Lemma, we can lift it to $\widetilde{e_1}, \cdots , \widetilde{e_m}$ which generate $\mathrm{coker}(f)$ as an $R$-module. Analogously, we can see that $\mathrm{coker}(f) \otimes_R R_g \cong S_g/\Fil^i(S_g) \otimes_{\GS_g} \GM_g/\varphi^*\GM_g$ which, by the same reasoning replacing $W(k)$ with $W(k_g)$, is a free $W(k_g)$-module of rank $m$. The rank is the same since $\SM$ arises from a crystalline representation. A crystalline representation in the relative case is Hodge-Tate and will have the same Hodge-Tate weights on all of its fibers, so the decomposition of $\GM_0/\varphi^*\GM_0$ will use the same Hodge-Tate weights $h_i$ as the analogous decomposition of $\GM_g/\varphi^*\GM_g.$ Then the images of $\widetilde{e_1}, \cdots , \widetilde{e_m}$ in $\mathrm{coker}(f) \otimes_R R_g$ must generate $S_g/\Fil^i(S_g) \otimes_{\GS_g} \GM_g/\varphi^*\GM_g$. Therefore they are linearly independent over $W(k_g)$ and, hence, over $R$, as well. Thus $\mathrm{coker}(f)$ is a finite, free $R$-module. 
\end{proof}
\end{lemma}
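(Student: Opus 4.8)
The plan is to compute $\mathrm{coker}(f)$ explicitly and then to prove that it is free by comparing with its reductions at the closed point $W(k)$ and at the Shilov point $R_g = W(k_g)$, where it is described by the classical structure theory of Breuil modules. For the first step I would write $\varphi^*\GM = \GS\otimes_{\varphi,\GS}\GM$ and let $\GM/\varphi^*\GM$ denote the cokernel of the linearization $1\otimes\varphi_\GM\colon\varphi^*\GM\to\GM$. Applying $-\otimes_\GS S$ and then $-\otimes_S S/\Fil^iS$ to the right-exact sequence $\varphi^*\GM\to\GM\to\GM/\varphi^*\GM\to 0$, and using $\SM = S\otimes_{\varphi,\GS}\GM$, one obtains $\mathrm{coker}(f)\cong S/\Fil^iS\otimes_\GS(\GM/\varphi^*\GM)$; equivalently, $\mathrm{coker}(f)$ is the cokernel of a map $S/\Fil^iS\otimes_S\SM\to S/\Fil^iS\otimes_\GS\GM$ of finite free $R$-modules (both are direct sums of copies of the finite free $R$-module $S/\Fil^iS$, using that $\GM$, hence $\SM$, is free, and that $S/\Fil^iS$ is finite free over $R$ for $i<p$). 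In particular $\mathrm{coker}(f)$ is a finitely generated $R$-module.

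Next I would base change along $\overline b\colon R\to W(k)$ and along the flat embedding $b_g\colon R\hookrightarrow R_g$. Because the presentation above is by finite free $R$-modules it is compatible with both base changes, and right-exactness of $\mathrm{coker}$ gives $\mathrm{coker}(f)\otimes_R W(k)\cong\mathrm{coker}(f_0)$ and $\mathrm{coker}(f)\otimes_R R_g\cong\mathrm{coker}(f_g)$, where $f_0$, $f_g$ are the analogous maps for $\GM_0 = \GM\otimes_\GS\GS_0$ and its Shilov-point analogue $\GM_g$. By Corollary \ref{quasiKisinclosed} and Theorem \ref{Kisinbasechange} these are Kisin modules attached to lattices in crystalline representations in the classical sense of \cite{Kisin}. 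Classically, for $i<p$ and $E$-height $\le p-2$, one has $S_0/\Fil^iS_0\cong W(k)[u]/E(u)^i$ and $\GM_0/\varphi^*\GM_0\cong\bigoplus_j W(k)[u]/E(u)^{h_j}$ with the $h_j$ the Hodge--Tate weights, whence $\mathrm{coker}(f_0)\cong\bigoplus_j W(k)[u]/E(u)^{\min(i,h_j)}$ is finite free over $W(k)$ of rank $m:=\sum_j\min(i,h_j)$; the identical computation over $W(k_g)$ shows $\mathrm{coker}(f_g)$ is finite free over $W(k_g)$. Since a relative crystalline representation is Hodge--Tate with Hodge--Tate weights that are locally constant on the connected scheme $\mathrm{Spec}\,R[p^{-1}]$, the multiset $\{h_j\}$, and therefore the rank $m$, is the same at both fibres.

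Finally I would deduce freeness over $R$. As $\mathrm{coker}(f)\otimes_R W(k)$ is free of rank $m$ over $W(k)$, we get $\dim_k\bigl(\mathrm{coker}(f)\otimes_R k\bigr)=m$, so Nakayama's lemma lets me lift a $k$-basis to elements $\widetilde e_1,\dots,\widetilde e_m$ generating $\mathrm{coker}(f)$ over $R$; this gives a surjection $R^m\twoheadrightarrow\mathrm{coker}(f)$. Base changing along the flat map $b_g$, the images of the $\widetilde e_j$ still generate $\mathrm{coker}(f)\otimes_R R_g\cong\mathrm{coker}(f_g)$, which is free of rank $m$ over $R_g$; a generating set of $m$ elements of a free module of rank $m$ is a basis, so the composite $R^m\twoheadrightarrow\mathrm{coker}(f)\to\mathrm{coker}(f)\otimes_R R_g\cong R_g^m$ is, up to this identification, the standard inclusion $R^m\hookrightarrow R_g^m$, in particular injective. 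Hence $R^m\twoheadrightarrow\mathrm{coker}(f)$ is injective as well, so $\mathrm{coker}(f)\cong R^m$ is finite free over $R$, as claimed. The step I expect to be most delicate is the base-change bookkeeping of the second paragraph: making precise that the reductions of $f$ along $\overline b$ and $b_g$ are the classical maps $f_0$ and $f_g$ (so that the structure theory of \cite{Kisin} applies verbatim), handling the fact that $W(k)$ is not flat over $R$, and pinning down the constancy of the Hodge--Tate weights — hence of the common rank $m$ — across the closed fibre and the Shilov point.
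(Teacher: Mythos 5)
Your proposal is correct and follows essentially the same route as the paper: identify $\mathrm{coker}(f)$ with $S/\Fil^iS\otimes_{\GS}(\GM/\varphi^*\GM)$ via right-exactness, compute its reductions at the closed fibre and the Shilov point using the classical structure theory of Kisin/Breuil modules (with the rank matching because the Hodge--Tate weights are constant across fibres), and then combine Nakayama at the closed fibre with flat base change to $R_g$ to get freeness over $R$. The only difference is cosmetic: you make the rank $m=\sum_j\min(i,h_j)$ explicit and phrase the final step as injectivity of a surjection $R^m\twoheadrightarrow\mathrm{coker}(f)$, where the paper speaks of linear independence of the lifted generators.
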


On the Shilov point we had to be concerned that $S_g$ is very large: it is not true that $\SM \otimes_R R_g = \SM_g$, we would need an additional $p$-adic completion which does not behave nicely over non-Noetherian rings. On the closed fiber we have a little more freedom since $S_0$ is smaller than $S$ and this concern is not present on the closed fiber. We can thus obtain a stronger version of Lemma \ref{genericdecomp} more easily: 
\begin{lemma} 
We have $$\Fil^i(\SM_0) = \Fil^i\SM \otimes_R W(k).$$
\label{closeddecomp}
\begin{proof}
    We have the exact sequence $$0 \rightarrow \Fil^i\SM \rightarrow \SM \xrightarrow{f} (S/\Fil^iS) \otimes_{\GS} \GM \to \mathrm{coker}(f) \to 0$$ from the definition of $\Fil^i\SM$, and let $f$ be the labelled map. Factoring through the kernel, we obtain the short exact sequence $$0 \to \SM/\Fil^i\SM \to (S/\Fil^iS) \otimes_{\GS} \GM \to \mathrm{coker}(f) \to 0.$$ We now base change to $W(k)$, noting that $\mathrm{coker}(f)$ is a finite, free $R$-module by Lemma \ref{cokernelf} and the tensor produdct commutes with the quotients by Lemma \ref{TorLemma}. We thus obtain the exact sequence $$0 \to \SM_0/(\Fil^i\SM \otimes_{R} W(k))  \to (S_0/\Fil^iS_0) \otimes_{\GS_0} \GM_0 \to \mathrm{coker}(f) \otimes_R W(k) \to 0.$$ As $\Fil^i\SM_0$ is exactly the kernel of $\SM_0 \to (S_0/\Fil^iS_0) \otimes_{\GS_0} \GM_0$ inside of $\SM_0$ and $\Fil^i\SM \otimes_R W(k)$ is contained in this kernel, the proof of the lemma is complete. 
\end{proof}
\end{lemma}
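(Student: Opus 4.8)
The plan is to realize both $\Fil^i(\SM_0)$ and $\Fil^i\SM\otimes_R W(k)$ as the kernel of one and the same linear map, and then to check that base change along $R\to W(k)$ commutes with forming that kernel. By Lemma~\ref{FilinS}, for $i<p$ the submodule $\Fil^i\SM\subset\SM$ is exactly the kernel of the map $f\colon\SM\to(S/\Fil^iS)\otimes_{\GS}\GM$ induced by $1\otimes\varphi_{\GM}$, so I would start from the four-term exact sequence
$$0\to\Fil^i\SM\to\SM\xrightarrow{f}(S/\Fil^iS)\otimes_{\GS}\GM\to\mathrm{coker}(f)\to 0$$
and split it into the short exact sequences $0\to\Fil^i\SM\to\SM\to\SM/\Fil^i\SM\to 0$ and $0\to\SM/\Fil^i\SM\to(S/\Fil^iS)\otimes_{\GS}\GM\to\mathrm{coker}(f)\to 0$.

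Next I would base change both sequences along $R\to W(k)$. The inputs are the $\mathrm{Tor}$-vanishing statements already available: $\mathrm{Tor}_1^R(\SM/\Fil^i\SM,W(k))=0$ from Lemma~\ref{TorLemma}(d), which makes $\Fil^i\SM\otimes_R W(k)\hookrightarrow\SM_0$ injective and gives $(\SM/\Fil^i\SM)\otimes_R W(k)\cong\SM_0/(\Fil^i\SM\otimes_R W(k))$; and the finite freeness of $\mathrm{coker}(f)$ over $R$ from Lemma~\ref{cokernelf}, so that $\mathrm{Tor}_1^R(\mathrm{coker}(f),W(k))=0$ and the second sequence stays left exact after tensoring. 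Using in addition the identification $\big((S/\Fil^iS)\otimes_{\GS}\GM\big)\otimes_R W(k)\cong(S_0/\Fil^iS_0)\otimes_{\GS_0}\GM_0$ (valid because $S/\Fil^iS$ is a free $R$-module for $i<p$ and $\GM\otimes_{\GS}\GS_0=\GM_0$), I arrive at the exact sequence
$$0\to\SM_0/(\Fil^i\SM\otimes_R W(k))\to(S_0/\Fil^iS_0)\otimes_{\GS_0}\GM_0\to\mathrm{coker}(f)\otimes_R W(k)\to 0,$$
in which the middle arrow is the base change $f_0$ of $f$, i.e.\ the map induced by $1\otimes\varphi_{\GM_0}$.

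Finally, left exactness of this sequence says exactly that $\Fil^i\SM\otimes_R W(k)$ is the full kernel of $f_0$ inside $\SM_0$; since $\Fil^i(\SM_0)$ is by construction that kernel (Lemma~\ref{FilinS} over $W(k)$), this is the claimed equality. I expect the only real content to be the $\mathrm{Tor}$-vanishing, i.e.\ controlling the non-flatness of $W(k)$ over $R$, which is precisely what Lemmas~\ref{TorLemma} and~\ref{cokernelf} deliver and which rests in turn on $\mathrm{coker}(f)$ being finite free over $R$ --- a consequence of a crystalline representation having constant Hodge--Tate weights along its fibers. In contrast to the Shilov-point decomposition of Lemma~\ref{genericdecomp}, the non-Noetherianity of $S$ causes no difficulty here, since $S/\Fil^iS$ is Noetherian for $i<p$ and no $p$-adic completion intervenes.
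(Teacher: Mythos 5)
Your proposal is correct and follows essentially the same route as the paper: splitting the four-term exact sequence defining $\Fil^i\SM$ as $\ker(f)$, base changing along $R\to W(k)$ using the $\mathrm{Tor}$-vanishing of Lemma \ref{TorLemma} and the finite freeness of $\mathrm{coker}(f)$ from Lemma \ref{cokernelf}, and identifying $\Fil^i\SM\otimes_R W(k)$ with the kernel of $f_0$, which is $\Fil^i(\SM_0)$. Your write-up is in fact slightly more explicit than the paper's about which $\mathrm{Tor}$ statements enter where, but the argument is the same.
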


Now that we can reduce the Breuil module filtration to the classical case, we will introduce a second filtration on $M$ which behaves a little more nicely under base change. In the classical case, the two filtrations agree, which is what we will show in our power series ring setting.

\begin{definition}
Let $$f_p: \SM \to M$$ be the map determined by $u \mapsto p$. Note this map sends $\frac{E(u)^i}{i!}$ to $0$. Define $\F^iM$ to be the image of $\Fil^i\SM$ under $f_p$.
\end{definition}

\begin{lemma}
$$\Fil^iM = \F^iM$$
\begin{proof}
We first show that $\Fil^iM \subset \F^iM$. Recall that we set $\mathscr{D} = \SM[p^{-1}]$ and $D = M[p^{-1}]$ upon which we can utilize the rational theory. Set $\Fil^iD \coloneqq \Fil^i\SD \cap D.$ We have the exact sequence $$0 \to \frac{\Fil^{i-1}\mathscr{D}}{(\Fil^pS)\SD} \xrightarrow{\cdot E} \frac{\Fil^i\mathscr{D}}{(\Fil^pS)\SD} \xrightarrow{f_p} \Fil^iD \to 0.$$ The kernel of $f_p: \Fil^i \SD \to \Fil^iD$ is $(\Fil^1S)\Fil^{i-1}\SD$ by \cite[Lemma 4.31]{Fcrystals}, and removing the $\Fil^pS$ part ensures that $\Fil^{i-1}\SD$ will give the entire kernel. Recalling from the rational theory that $$\Fil^i\SM = \Fil^i\mathscr{D} \cap \SM,$$ we obtain the exact sequence \begin{align}0 \to \frac{\Fil^{i-1}\SM}{(\Fil^pS)\SM} \to \frac{\Fil^i\SM}{(\Fil^pS)\SM} \xrightarrow{f_p} \F^iM \to 0.\end{align} Since $\Fil^iM \subset \Fil^i\SM \cap M \subset \Fil^i\SD \cap D$, we have an injective map $$\Fil^iM \hookrightarrow \Fil^iD \hookrightarrow \frac{\Fil^i\SD}{(\Fil^pS)\SD} \xrightarrow{f_p} \Fil^iD.$$ On the other hand, this agrees with the map $$\Fil^iM \hookrightarrow \frac{\Fil^i\SM}{(\Fil^pS)\SM} \xrightarrow{f_p} \F^iM \hookrightarrow \Fil^iD$$ giving us that $$\Fil^iM \subset \F^iM.$$

Tensoring $(1)$ with $W(k)$ and applying Lemma \ref{closeddecomp} we obtain the exact $$\frac{\Fil^{i-1} \SM_0}{(\Fil^pS_0)\SM_0} \to \frac{\Fil^i\SM_0}{(\Fil^pS_0)\SM_0} \xrightarrow{f_p} \F^iM \otimes_R W(k) \to 0.$$ But we also know from the classical theory that $\Fil^i(M_0)$ is the image of $\Fil^i\SM_0$ under $f_p$, so we obtain that $\Fil^iM_0 = \F^iM \otimes_R W(k)$ and hence $\F^iM \otimes_R W(k)$ is a finite, free $W(k)$-module. Let $\overline{e_1}, \cdots , \overline{e_a}$ be a basis and by Nakayama's Lemma we can lift it to a generating set $e_1, \cdots, e_a$ of $\F^iM$. Since $\F^iM$ is an image after $u \mapsto p$, it is clear that we can safely mod out $(1)$ by $\Fil^pS$ to obtain the exact $$0 \to \frac{\Fil^{i-1}\SM}{(\Fil^pS)\SM} \to \frac{\Fil^i\SM}{(\Fil^pS)\SM} \xrightarrow{f_p} \F^iM \to 0.$$ Now we can safely base change to the Shilov point to obtain $$0 \to \frac{\Fil^{i-1}\SM_g}{(\Fil^pS_g)\SM_g} \to \frac{\Fil^i\SM_g}{(\Fil^pS_g)\SM_g} \xrightarrow{f_p} \F^iM \otimes_R R_g \to 0.$$ the base change in the first two terms is as written due to Theorem \ref{genericFL} and the classical functor from Fontaine-Laffaille modules to Breuil modules described in Definition \ref{classicalfunctor}. Applying the classical theory to the Shilov point, we know $\F^iM \otimes_R R_g = \Fil^iM_g$ and hence is a finite, free $R_g$ module and is of the same rank as $\Fil^iM_0$. Note that the rank is the same again because of the Hodge-Tate property of crystalline representations. The places where the filtrations of $M_0$ and $M_g$ increase in rank and the amount they increase in rank by are determined completely by the Hodge-Tate weights of the crystalline representation associated to $M$. These are the same on each fiber and, consequently, the ranks of $\Fil^iM_0$ and $\Fil^iM_g$ are the same. \newline 

Thus with our embedding $\F^iM \hookrightarrow \Fil^iM \otimes_R R_g$, we see that $e_1, \cdots , e_a$ is a basis of $\F^iM$ and hence $\F^iM$ is a finite, free $R$-module.  \newline

Now we know that $\F^iM$ is a finite, free $R$-module contained in both $\Fil^iM_g$ and $\Fil^i\SD$ We conclude that \begin{align*} \F^iM &\subset (\F^iM \otimes_R R_g) \cap \Fil^i\SD \\
&= \Fil^iM_g \cap \Fil^i \SD\\
&= \Fil^iM
\end{align*}

\end{proof}
\end{lemma}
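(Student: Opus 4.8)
The plan is to prove the two inclusions $\Fil^iM\subseteq\F^iM$ and $\F^iM\subseteq\Fil^iM$ separately. The first is formal once one has the rational picture of \cite{Fcrystals}; the second is the substantive one, and it is where the base changes to the Shilov point $R_g$ and to the closed fibre $W(k)$ do the work.

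For the first inclusion I would pass to the rational objects $\SD=\SM[p^{-1}]$ and $D=M[p^{-1}]$, where by \cite[Lemma 4.31]{Fcrystals} the map $f_p$ induces a surjection $\Fil^i\SD\twoheadrightarrow\Fil^iD=\Fil^i\SD\cap D$ whose kernel is $(\Fil^1S)\Fil^{i-1}\SD$; quotienting out the harmless submodule $(\Fil^pS)\SD$ yields a short exact sequence
$$0\longrightarrow\frac{\Fil^{i-1}\SD}{(\Fil^pS)\SD}\xrightarrow{\ \cdot E\ }\frac{\Fil^i\SD}{(\Fil^pS)\SD}\xrightarrow{\ f_p\ }\Fil^iD\longrightarrow 0.$$
Since $\Fil^i\SM=\Fil^i\SD\cap\SM$ by Definition \ref{BreuilFilDef}, this restricts to an integral exact sequence $0\to\Fil^{i-1}\SM/(\Fil^pS)\SM\to\Fil^i\SM/(\Fil^pS)\SM\xrightarrow{f_p}\F^iM\to 0$. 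Chasing these two diagrams, and using $\Fil^iM=\Fil^i\SM\cap M\subseteq\Fil^iD$, identifies the inclusion $\Fil^iM\hookrightarrow\Fil^iD$ with the map $m\mapsto f_p(m)$ factored through $\Fil^i\SM/(\Fil^pS)\SM$, whose image lies in $\F^iM$; hence $\Fil^iM\subseteq\F^iM$.

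For the reverse inclusion, the heart of the matter is to show that $\F^iM$ is a finite free $R$-module, and I would do this by descending to both special fibres. First, tensoring the integral exact sequence above with $W(k)$: the cokernel of $\SM\to S/\Fil^iS\otimes_{\GS}\GM$ is a free $R$-module (Lemma \ref{cokernelf}) and the relevant $\Tor_1^R(-,W(k))$ vanish (Lemma \ref{TorLemma}), so using Lemma \ref{closeddecomp} (which gives $\Fil^i\SM_0=\Fil^i\SM\otimes_RW(k)$) one identifies $\F^iM\otimes_RW(k)$ with $\Fil^iM_0$, which is finite free over $W(k)$ by the classical theory of \cite{Fontaine} and \cite{Gao}. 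Lift a $W(k)$-basis through Nakayama to a generating set $e_1,\dots,e_a$ of $\F^iM$. Next, tensoring the same sequence with $R_g$ over $R$: by Theorem \ref{genericFL}, Theorem \ref{genericfil}, and the explicit description of the classical functor in Definition \ref{classicalfunctor}, the first two terms become $\Fil^{i-1}\SM_g/(\Fil^pS_g)\SM_g$ and $\Fil^i\SM_g/(\Fil^pS_g)\SM_g$, whence $\F^iM\otimes_RR_g\cong\Fil^iM_g$, finite free over $R_g$. Its rank is again $a$, because the jumps of the Hodge filtrations of $M_0$ and of $M_g$ are dictated by the Hodge--Tate weights of $\Lambda\otimes_{\mathbb{Z}_p}\mathbb{Q}_p$, which are constant on all fibres of a crystalline representation. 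Therefore $e_1,\dots,e_a$ become an $R_g$-basis after base change, so they are linearly independent over $R$, and $\F^iM$ is finite free of rank $a$.

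Finally, I would conclude by a flat-intersection argument: $\F^iM$ is a finite free $R$-submodule of $M$ lying both in $\F^iM\otimes_RR_g\cong\Fil^iM_g$ and in $\Fil^i\SD$ (since $f_p(\Fil^i\SM)\subseteq\Fil^iD\subseteq\Fil^i\SD$), so, applying Lemma \ref{flatlemma} inside $\SM_g[p^{-1}]$,
$$\F^iM\subseteq(\F^iM\otimes_RR_g)\cap\Fil^i\SD=\Fil^iM_g\cap\Fil^i\SD=\Fil^iM,$$
and together with the first inclusion this gives $\Fil^iM=\F^iM$. I expect the main obstacle to be the middle step, controlling $\F^iM$ over $R$ itself, rather than either inclusion in isolation: the non-Noetherianity of $S$ forces the $\Fil^pS$-truncation and term-by-term bookkeeping, the non-flatness of $W(k)$ over $R$ requires the a priori freeness of the relevant cokernels (Lemma \ref{cokernelf}) and the vanishing of $\Tor_1^R(-,W(k))$ (Lemma \ref{TorLemma}), and the compatibility of the two base changes hinges on the constancy of the Hodge--Tate weights across fibres.
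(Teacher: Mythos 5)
Your proposal is correct and follows essentially the same route as the paper: the same rational exact sequence from \cite[Lemma 4.31]{Fcrystals} truncated by $(\Fil^pS)\SD$ for the inclusion $\Fil^iM\subseteq\F^iM$, the same two base changes (to $W(k)$ via Lemmas \ref{cokernelf}, \ref{TorLemma}, \ref{closeddecomp}, and to $R_g$ via Theorem \ref{genericFL}) combined with Nakayama and the constancy of Hodge--Tate weights to show $\F^iM$ is finite free, and the same final flat-intersection argument $\F^iM\subseteq(\F^iM\otimes_RR_g)\cap\Fil^i\SD=\Fil^iM_g\cap\Fil^i\SD=\Fil^iM$. No substantive differences.
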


\begin{theorem} 
\label{closedFil}
$\Fil^i(M_0) \cong \Fil^i(M) \otimes_R W(k).$ 
\begin{proof}
In the proof of the previous lemma we established that $\F^iM \otimes_R W(k) = \Fil^iM_0$. The result of the previous lemma is that $\F^iM = \Fil^iM$, completing the proof. 
\end{proof}
\end{theorem}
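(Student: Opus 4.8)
The plan is to obtain the statement directly from the proof of the preceding lemma, in which essentially all of the work has already been done. First I would recall that in verifying the equality $\Fil^i M = \F^i M$ we tensored the short exact sequence
$$0 \to \frac{\Fil^{i-1}\SM}{(\Fil^pS)\SM} \to \frac{\Fil^i\SM}{(\Fil^pS)\SM} \xrightarrow{f_p} \F^i M \to 0$$
with $W(k)$ over $R$. Using Lemma \ref{closeddecomp} to identify $\Fil^j \SM \otimes_R W(k)$ with $\Fil^j \SM_0$ for $j < p$, together with the Tor-vanishing of Lemma \ref{TorLemma} (so that base change commutes with the quotient by $(\Fil^pS)\SM$), and the classical fact that $\Fil^i M_0$ is exactly the image of $\Fil^i \SM_0$ under $u \mapsto p$, one arrives at the identification $\F^i M \otimes_R W(k) = \Fil^i M_0$.

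Second, I would invoke the preceding lemma itself, which asserts $\F^i M = \Fil^i M$ as submodules of $M$. Substituting this equality into the identification of the previous paragraph gives
$$\Fil^i M_0 \;=\; \F^i M \otimes_R W(k) \;=\; \Fil^i M \otimes_R W(k),$$
which is the assertion of the theorem; under the canonical identification $M \otimes_R W(k) \cong M_0$ of Theorem \ref{Kisinbasechange} this is even an equality of submodules.

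I do not anticipate a genuine obstacle in this final step: all of the substantive content — the base-change compatibility of the Breuil filtration on the closed fiber (Lemma \ref{closeddecomp}), the freeness of $\mathrm{coker}(f)$ (Lemma \ref{cokernelf}), and the coincidence $\F^\bullet M = \Fil^\bullet M$ — is already in place, and the present theorem is the bookkeeping that assembles them. The one point worth a sentence of care is that the two descriptions of the filtration used above (as $\Fil^i \SM \cap M$ via the section $s$, and as the image $\F^i M$ under $u \mapsto p$) are threaded through compatible maps, so that the resulting isomorphism respects the identification $M \otimes_R W(k) \cong M_0$; this was already verified implicitly in the proof of the preceding lemma.
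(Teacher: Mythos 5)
Your proposal is correct and is essentially identical to the paper's own proof: both deduce the statement by combining the identity $\F^iM \otimes_R W(k) = \Fil^iM_0$ established within the proof of the preceding lemma with that lemma's conclusion $\F^iM = \Fil^iM$. The extra detail you supply about how the first identity was obtained, and the remark on compatibility of the identifications, are consistent with (and implicit in) the paper's argument.
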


This completes the proof of the following theorem:

\begin{theorem}
\label{closedFL}
$M \otimes_R W(k)$ is a Fontaine-Laffaille module over $W(k)$, and $M \otimes_R W(k) \cong M_0$ as Fontaine-Laffaille modules. 
\end{theorem}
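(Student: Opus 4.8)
The plan is to assemble this statement from the pieces already established, in direct analogy with the proof of Theorem~\ref{genericFL}. The underlying $W(k)$-module isomorphism, compatible with the $\varphi_i$, is exactly the content of Theorem~\ref{Kisinbasechange} and Corollary~\ref{quasiKisinclosed}: since $M = \GM/u\GM \otimes_{R,\varphi_{\GS}} R$ and the closed-fiber projection $\overline{b}\colon R\to W(k)$ is $\varphi$-equivariant, base change commutes with the formation of $M$, giving $M\otimes_R W(k)\cong \GM_0/u\GM_0\otimes_{W(k),\varphi}W(k)$, which by Corollary~\ref{quasiKisinclosed} is precisely $M_0$, the classical Fontaine-Laffaille module (in the sense of Definition~\ref{FLdefClassic}) attached to a $\mathbb{Z}_p$-lattice inside a crystalline $G_K$-representation.

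Next I would invoke Theorem~\ref{closedFil}, which provides the missing compatibility of filtrations, namely $\Fil^i(M)\otimes_R W(k)\cong \Fil^i(M_0)$ for all $i$. Transporting the Fontaine-Laffaille structure on $M$ across the isomorphism $M\otimes_R W(k)\cong M_0$, the filtration on the left matches the filtration on $M_0$ by Theorem~\ref{closedFil} and the $\varphi_i$ match those on $M_0$ by Theorem~\ref{Kisinbasechange} and Corollary~\ref{quasiKisinclosed}; since $M_0$ already satisfies all the axioms of Definition~\ref{FLdefClassic} (decreasing filtration with $\Fil^0=M_0$ and $\Fil^{r+1}=0$, each step a direct summand, $\varphi_i|_{\Fil^{i+1}}=p\varphi_{i+1}$, and $\sum_i\varphi_i(\Fil^iM_0)=M_0$), so does $M\otimes_R W(k)$. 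This is exactly the assertion of the theorem.

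The genuine work all sits upstream: the only nontrivial input is Theorem~\ref{closedFil}, whose proof must cope with the non-flatness of $W(k)$ over $R$ and therefore relies on the $\Tor_1$-vanishing statements of Lemma~\ref{TorLemma} and the freeness of $\mathrm{coker}(f)$ from Lemma~\ref{cokernelf}; granting those, the present theorem is a purely formal bookkeeping step, and I do not anticipate any obstacle beyond correctly matching up the transported structures. One should double-check only that the Frobenius-compatibility provided by Theorem~\ref{Kisinbasechange} and Corollary~\ref{quasiKisinclosed} is genuinely compatible with the $\varphi_i$ (not merely $\varphi_0$), which follows because the $\varphi_i$ on $M$ are defined as restrictions of $\varphi_{\SM,i}$ and these are built from $\varphi_{S,i}$ together with $1\otimes\varphi_{\GM}$, both of which commute with the closed-fiber base change.
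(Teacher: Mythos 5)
Your proposal is correct and follows essentially the same route as the paper: the paper likewise treats this theorem as the formal assembly of the module-level, $\varphi$-compatible isomorphism from Theorem~\ref{Kisinbasechange} and Corollary~\ref{quasiKisinclosed} with the filtration compatibility of Theorem~\ref{closedFil}, mirroring the proof of Theorem~\ref{genericFL} at the Shilov point. Your closing remark about verifying compatibility of the higher $\varphi_i$ (not just $\varphi_0$) is a reasonable extra check that the paper leaves implicit.
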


\end{subsection}
\begin{subsection}{$M$ is an object of $\mathrm{MF}_{\nabla}^{\mathrm{ff}}$}
We now complete the proof of Theorem \ref{MisFL}:
\begin{proof}
Having established Theorem \ref{genericFL} and Theorem \ref{closedFL}, we have shown Condition (1) of Proposition \ref{genericclosed}. Since $M$ is a finite, free $R$-module, condition (2) holds, and thus $M$ is a Fontaine-Laffaille module over $R$ and belongs to the category $\mathrm{MF}^{\mathrm{ff}}(R)$.  
\end{proof}

\begin{theorem} $M$ is an object of the category $\mathrm{MF}^{\mathrm{ff}}_\nabla$. 
\begin{proof}
By Theorem \ref{closedFL} and Theorem \ref{quasiKisinTheorem}, we only need to show that $M$ is stable under $\nabla_{\GM}$ and that it satisfies integral Griffiths Transversality. \newline

Using Lemma \ref{section} and Lemma \ref{flatlemma} we see that $M = M[p^{-1}] \cap \SM$  as submodules of $\SM[p^{-1}].$ We know $M[p^{-1}]$ is stable under $\nabla_{\SM}$ (which restricts to $\nabla_{\GM}$ on $M[p^{-1}]$) and, by Proposition \ref{BreuilNablaStable}, so is $\SM$, showing that $M$ is stable under $\nabla_{\GM}.$ \newline

Because $\Fil^i(M) = \Fil^i(\SM) \cap M$, $M$ is stable under $\nabla_{\SM}$, and $\nabla_\SM$ satisfies integral $S$-Griffiths transversality by lemma \ref{BreuilNablaStable} we conclude $\nabla_{\GM}$ satisfies Griffiths transversality, as needed. 
\end{proof}
\end{theorem}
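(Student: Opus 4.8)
The plan is to verify the two remaining properties of a $\nabla$-Fontaine--Laffaille module directly from what has already been assembled, namely stability of $M$ under the connection and integral Griffiths transversality, reducing both to facts already proved for the Breuil module $\SM$. First I would recall the identification $M = M[p^{-1}] \cap \SM$ as submodules of $\SM[p^{-1}]$: the section $s\colon M \to \SM$ of Lemma \ref{sectionstable} realizes $M$ inside $\SM$, and $1 \otimes s\colon S \otimes_R M \to \SM$ is an isomorphism by Lemma \ref{NakSec}, so by flatness of $S[p^{-1}]$ over $S$ and Lemma \ref{flatlemma} one gets $M[p^{-1}] \cap \SM = (M[p^{-1}] \cap S) \otimes \cdots$; more simply, $M = \SM \cap M[p^{-1}]$ because $s$ is compatible with the $p$-adic structures and $\SM/M$ is $p$-torsion-free (being a direct sum of copies of $S/R$, again by Lemma \ref{NakSec}).

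Next I would handle stability under $\nabla$. On the rational side, the connection $\nabla_{\SM[p^{-1}]}$ restricts to $\nabla_D = \nabla_{\GM}$ on $M[p^{-1}] = \Dcrisdual(V)$ by construction (Section on the integrable connection, via \cite[Theorem 4.28]{Fcrystals}), so $M[p^{-1}]$ is $\nabla_{\SM[p^{-1}]}$-stable. By Proposition \ref{BreuilNablaStable}, $\SM$ is stable under $\nabla_{\SM}$. Therefore $\nabla_{\SM[p^{-1}]}(M) \subset (M[p^{-1}] \otimes_R \widehat\Omega_R) \cap (\SM \otimes_R \widehat\Omega_R)$, and since $\widehat\Omega_R$ is finite free over $R$ this intersection equals $(M[p^{-1}] \cap \SM) \otimes_R \widehat\Omega_R = M \otimes_R \widehat\Omega_R$ by Lemma \ref{flatlemma}. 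Hence $M$ is $\nabla_{\GM}$-stable, and the induced connection is integrable and topologically quasi-nilpotent because $\nabla_{\SM}$ is.

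For integral Griffiths transversality I would argue the same way one level down in the filtration. We have $\Fil^i(M) = \Fil^i(\SM) \cap M$ by definition, and Proposition \ref{BreuilNablaStable} gives that $\nabla_{\SM}$ satisfies integral $S$-Griffiths transversality, i.e. $\nabla_{\SM}(\Fil^{i+1}(\SM)) \subset \Fil^i(\SM) \otimes_R \widehat\Omega_R$. Restricting to $\Fil^{i+1}(M) \subset \Fil^{i+1}(\SM)$ and using $\nabla$-stability of $M$ just established, $\nabla(\Fil^{i+1}(M))$ lands in $(\Fil^i(\SM) \otimes_R \widehat\Omega_R) \cap (M \otimes_R \widehat\Omega_R) = (\Fil^i(\SM) \cap M) \otimes_R \widehat\Omega_R = \Fil^i(M) \otimes_R \widehat\Omega_R$, again by Lemma \ref{flatlemma}. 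One also needs the $\varphi_i$ to be parallel, but this is inherited from the corresponding compatibility for $\varphi_{\SM,i}$ with $\nabla_{\SM}$ (Definition \ref{Breuildefinition}) restricted along the section $s$, together with the fact that $s$ is $\varphi$-compatible. Since $M$ has already been shown to lie in $\mathrm{MF}^{\mathrm{ff}}(R)$ in the previous subsection, combining this with the above shows $M \in \mathrm{MF}^{\mathrm{ff}}_{\nabla}(R)$. The only mildly delicate point — and the place I expect to spend the most care — is making sure the connection we have constructed on $M$ via $\nabla_{\SM}$ genuinely coincides with $\nabla_{\GM}$ from Theorem \ref{quasiKisinTheorem}, i.e. that the two sources of a connection on $M$ (reduction of $\nabla_{\SM}$ through the section, versus the connection on $\GM/u\GM \otimes_{\varphi} R$ supplied by \cite{Fcrystals}) agree; this is exactly what \cite[Theorem 4.2, Theorem 4.28]{Fcrystals} encode, so it reduces to bookkeeping rather than new input.
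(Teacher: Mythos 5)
Your proposal is correct and follows essentially the same route as the paper: reduce to $\nabla$-stability and integral Griffiths transversality, realize $M = M[p^{-1}]\cap\SM$ via the section and Lemma \ref{flatlemma}, and then intersect the stability statements for $M[p^{-1}]$ and for $\SM$ (Proposition \ref{BreuilNablaStable}), with the filtration case handled identically using $\Fil^i(M)=\Fil^i(\SM)\cap M$. The extra points you flag (parallelism of the $\varphi_i$ and the agreement of the two connections on $M$ via \cite[Theorems 4.2, 4.28]{Fcrystals}) are details the paper leaves implicit, not a different argument.
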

\end{subsection}
\begin{subsection}{Proof of Essential Surjectivity}
We now complete the proof that $\Tcris$ is essentially surjective. Let $\Lambda$ be an object of $\mathrm{Rep}_{\mathbb{Z}_p, [0,r]}^{\cris}(G_R)$. Let $(\GM, \varphi_{\GM}, f)$ be the Kisin module with descent data associated to $\Lambda$ guaranteed by Theorem \ref{quasiKisinTheorem} (note Remark \ref{dualremark}). Note also this theorem associates a Kisin module with descent data to an \'etale $(\varphi_{\etalering}, \etalering)$-module $\mathcal{M}$ with $T_{\etalering}(\mathcal{M})(r) \cong \Lambda$, and then $T_{\GS}^r(\GM) \cong \Lambda$ by Lemma \ref{etaleKisinGaloisEq}. \newline 

Let $\SM \cong \GM \otimes_{\varphi, \GS} S$ be the Breuil module associated with $\GM.$ We first prove the following stepping stone to Theorem \ref{TGTScompat}.
 \newline 

\begin{lemma}
\label{GaloisCongLemma} The natural injection $$T_{\GS}^r(\GM) \to T_S(\SM) \coloneqq \Fil^r(\AcrisR \otimes_S \SM)^{\varphi_r=1}$$ induced by the embedding $\AinfR \to \AcrisR$ is an isomorphism of $G_{\tilde{R}_\infty}$ representations.  
\begin{proof}
We prove the lemma by reducing to the Shilov point where we argue as in \cite[Lemma 4.7]{https://doi.org/10.48550/arxiv.2110.06001}. First note that using Lemma \ref{etaleKisinGaloisEq}, $$\Lambda \cong T_{\etalering}(\mathcal{M})(r) \cong T^r_{\GS}(\GM) \to (\Fil^r\varphi^*\GM \otimes_{\GS} \AinfR)^{\varphi_r = 1} \cong (\widehat{\mathcal{O}_{\mathcal{E}, g}^{\mathrm{ur}}} \otimes_{\mathcal{O}_{\mathcal{E}, g}} \mathcal{M}_g)^{\varphi = 1}(r) \cong \Lambda$$ is an isomorphism of $G_{\tilde{R}_{g,{\infty}}}$ representations. \newline

Then we obtain the diagram: 

 \adjustbox{scale=1,center}{%
  \begin{tikzcd}[scale=1]    
      T_{\GS}^r(\GM) \arrow{r} \arrow{d}{\cong} & T_S(\SM) \arrow{d}\\
      (\Fil^r\varphi^*\GM \otimes_{\GS} \Ainf)^{\varphi_r = 1} \arrow{r}{\cong} & \Fil^r(\Acris(R_g) \otimes_S \SM_g)^{\varphi_r=1}
  \end{tikzcd}
} 
where the bottom map is an isomorphism by \cite[Prop 6.12]{LiLiu} and $\Ainf$ represents the usual $\Ainf$ from the classical theory. We do not know whether the right vertical map is an injection since the natural map $\Acris(R) \to \Acris(R_g)$ requires a $p$-adic completion. We can, however, extend the diagram to the right because if $M$ is the Fontaine-Laffaille module associated to $\SM$, then the unique section of Lemma \ref{section} and the discussion of Section \ref{BreuilGaloisActionTwo} induces an isomorphism $T_S(\SM) \cong \Tcris(M)$, and we get a similar isomorphism on the Shilov point.  This gives us the diagram: 

 \adjustbox{scale=1,center}{%
  \begin{tikzcd}[scale=1]    
      T_{\GS}^r(\GM) \arrow{r} \arrow{d}{\cong} & T_S(\SM) \arrow{d} \arrow{r}{\cong} & \Tcris(M) \arrow{d}{\cong}\\
      (\Fil^r\varphi^*\GM \otimes_{\GS} \Ainf)^{\varphi_r = 1} \arrow{r}{\cong} & \Fil^r(\SM_g \otimes_S  \Acris(R_g) )^{\varphi_r=1} \arrow{r}{\cong} & \Tcris(M_g)
  \end{tikzcd}
} 
where the rightmost vertical arrow is an isomorphism by \cite[Cor 2.3.5]{LMP}. Since $T_{\GS}^r(\GM) \to T_S(\SM)$ is an injection, it must then be an isomorphism, completing the proof of the lemma.  
\end{proof}
\end{lemma}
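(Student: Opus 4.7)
The plan is to reduce the statement to the classical Shilov-point case, identifying both sides of the map with previously understood objects. First, by combining Lemma \ref{etaleKisinGaloisEq} with Theorem \ref{quasiKisinTheorem} (adjusted to the covariant setup as in Remark \ref{dualremark}), I identify $T_{\GS}^r(\GM) \cong \Lambda$ canonically as $G_{\tilde{R}_\infty}$-representations. Second, using the section $s: M \to \SM$ of Lemma \ref{sectionstable} together with the isomorphism $1 \otimes s: S \otimes_R M \xrightarrow{\cong} \SM$ of Lemma \ref{NakSec}, I identify $\AcrisR \otimes_S \SM \cong \AcrisR \otimes_R M$ compatibly with the filtration and $\varphi$-structures, which yields a canonical isomorphism $T_S(\SM) \cong \Tcris(M)$. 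Its $G_{\tilde{R}_\infty}$-equivariance follows from the comparison of the two descriptions of the Galois action (via descent datum and via connection) at the end of Section \ref{BreuilGaloisActionTwo}.

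Next, base change to the Shilov point. By Theorem \ref{Kisinbasechange}, the Kisin module $\GM$ base changes to $\GM_g$, the Kisin module associated to $\Lambda\vert_{G_{R_g}}$; similarly the Breuil and Fontaine-Laffaille modules base change compatibly. The classical comparison \cite[Prop 6.12]{LiLiu} supplies an isomorphism $(\Fil^r \varphi^*\GM_g \otimes_{\GS_g} \Ainf(R_g))^{\varphi_r=1} \xrightarrow{\cong} \Fil^r(\Acris(R_g) \otimes_{S_g} \SM_g)^{\varphi_r=1}$ at the Shilov point, while \cite[Cor 2.3.5]{LMP} supplies a canonical isomorphism $\Tcris(M) \xrightarrow{\cong} \Tcris(M_g)$. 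Placing these in a commutative diagram whose top row is $T_{\GS}^r(\GM) \hookrightarrow T_S(\SM) \xrightarrow{\cong} \Tcris(M)$, whose bottom row is its Shilov-point counterpart (a chain of isomorphisms), and whose outer vertical arrows are isomorphisms (the left by the identification with $\Lambda$, the right by \cite[Cor 2.3.5]{LMP}), a short diagram chase forces the top-row injection to be an isomorphism.

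The main obstacle is that the middle vertical arrow $T_S(\SM) \to \Fil^r(\Acris(R_g) \otimes_{S_g} \SM_g)^{\varphi_r=1}$ is not accessible by naive flatness, since $\AcrisR \to \Acris(R_g)$ involves an additional $p$-adic completion. The strategic point is that we never need to understand this middle vertical: the outer two columns suffice for the diagram chase, which is made possible precisely by the two identifications of $T_{\GS}^r(\GM)$ and $\Tcris(M)$ with $\Lambda$ and its Shilov-point restriction. A subsidiary verification is the $G_{\tilde{R}_\infty}$-equivariance of each intermediate identification; this reduces to checking that the various constructions of the Galois action (from descent data, from the connection, and via the section) produce the same action on the relevant period rings.
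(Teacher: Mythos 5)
Your proposal is correct and follows essentially the same route as the paper: identify $T_{\GS}^r(\GM)\cong\Lambda$ via Lemma \ref{etaleKisinGaloisEq}, identify $T_S(\SM)\cong\Tcris(M)$ via the section of Lemma \ref{sectionstable}, and run the diagram chase through the Shilov point using \cite[Prop 6.12]{LiLiu} and \cite[Cor 2.3.5]{LMP}, circumventing the problematic middle vertical arrow exactly as the paper does. No substantive differences to report.
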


The last diagram in the proof of this lemma then establishes that $\Tcris(M) \cong \Lambda$ as $G_{R_{\infty}}$ representations. We now are able to prove Theorem \ref{TGTScompat} by showing this is actually an isomorphism of $G_R$ representations which comes from the fact that the above maps are compatible with the relevant Galois actions. \newline 

The isomorphism $T_S(\SM) \to \Tcris(M)$ is determined by the unique $\nabla$-compatible section $M \to \SM$, and it is clear that the Galois action on $\Tcris(M)$ determined by $\nabla$ on $M$ is compatible with the Galois action on $T_S(\SM)$ determined by $\nabla$ on $\SM$, confirming that this map is actually an isomorphism of $G_R$ representations. \newline

To see that the map $T_{\GS}^r(\GM) \to T_S(\SM)$ is compatible with the $G_R$ action, \cite[Proposition 4.6]{Fcrystals} confirms that there is a unique descent datum $f$ on $\GM$ such that $f_S = \mathrm{id}_S \otimes_{\varphi, \GS} f$. As the descent data is used to construct the full $G_R$ action in the same way as described in Section \ref{BreuilGaloisActionTwo} and Section \ref{KisinGaloisAction} and this is the same as the Galois action defined via the $\nabla$ structure, the Galois action established on $T_{\GS}^r(\GM)$ is compatible with the Galois action on $T_S(\SM)$ and thus the map $T_{\GS}^r(\GM) \to T_S(\SM)$ which was previously shown to be an isomorphism of $G_{R_\infty}$ representations is actually an isomorphism of $G_R$ representations, completing the proof of Theorem \ref{TGTScompat}. \newline 

From the diagram in the above proof, we have also shown that $T_{\mathrm{cris}}(M) \cong T_{\GS}^r(\GM) \cong \Lambda$ as $G_R$ representations, which completes the proof of essential surjectivity over $R$. 
\end{subsection}
\end{section}

\begin{section}{Essential Surjectivity of $\Tcris$ over small base rings}
We will now prove theorems analaogous to those in the last section but in more generality over small base rings. While the results are analogous, the proof is easier because we are able to leverage the theory from the previous section to define a $\varphi_i$ structure via the rational theory, which would be difficult to do for the power series ring. Throughout this section, fix $T$ a small base ring. Write $\GS = \GS_T = T\llbracket u \rrbracket$ and $S = S_T$. Let $I_0 \subset S$ denote the kernel of the projection $S \to T$ given by $u \mapsto 0$.
\begin{subsection}{Statement of Main Theorems}
\begin{theorem}
\label{MisFLoverT}
Let $\SM$ be a Breuil module which arises from a Kisin module which arises from a $\Lambda,$ an object in $\mathrm{Rep}_{\mathbb{Z}_p}^{\cris [0,r]}(G_T).$  Set $$M \coloneqq \SM/I_0\SM \cong \GM/u\GM \otimes_{T, \varphi_{\GS}} T.$$ Then $M$ carries the data of a Fontaine-Laffaille module and satisfies $M \otimes_R S \cong \SM$ as in Definition \ref{classicalfunctor} We call it the ``Fontaine-Laffaille module associated to $\Lambda$."
\end{theorem}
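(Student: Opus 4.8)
The plan is to reduce to the power series case already proved in Theorem \ref{MisFL} by means of the local criterion of Theorem \ref{TateLMPTheorem}; this is exactly what that criterion was set up for, and in particular the decomposition lemmas and $\mathrm{Tor}$-computations of Section 3 do not have to be repeated over $T$.

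First I would put the Fontaine--Laffaille data on $M$. Since $\SM = \GM\otimes_{\varphi,\GS}S$ and $\GM$ is projective over $\GS_T$ by Remark \ref{freeremark}, the module $M\cong\GM/u\GM\otimes_{T,\varphi_{\GS}}T$ is finite projective over $T$, and Lemma \ref{section} (which holds over any general base ring) furnishes the unique $\varphi$-compatible section $s\colon M[p^{-1}]\to\SM[p^{-1}]$ together with the identification $\SM[p^{-1}]\cong S_T[p^{-1}]\otimes_{T[p^{-1}]}M[p^{-1}]$. I would define $\Fil^i M$ and $\varphi_{M,i}$ just as over $R$: either as $\Fil^iM\coloneqq\Fil^i\SM\cap M$ and $\varphi_{M,i}\coloneqq\varphi_{\SM,i}|_M$, or --- using that here the rational theory is available --- by transporting the filtration of $\Dcrisdual(V)$ along the isomorphism $M[p^{-1}]\cong\Dcrisdual(V)$ of \cite[Theorem 4.28]{Fcrystals} and setting $\Fil^iM\coloneqq M\cap\Fil^i(M[p^{-1}])$. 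Either way $\Fil^0M=M$, $\Fil^{r+1}M=0$, each $\Fil^iM$ is finite over the Noetherian ring $T$, and $\Fil^iM\xrightarrow{\iota}\Fil^{i-1}M\xrightarrow{\varphi_{M,i-1}}M$ equals $p\varphi_{M,i}$; the only nonformal point, that $\varphi$ carries $\Fil^iM$ into $p^iM$ so that $\varphi_{M,i}$ is defined, I would verify after the faithfully flat base changes below. This makes $M$ an object of $\mathrm{MF}_{\mathrm{big}}^{\mathrm{f}}(T)$ with all $\Fil^iM$ finite.

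Next I would base change along $b_{\mathfrak m}\colon T\to\Tatem\cong R$ for each maximal ideal $\mathfrak m$ of $T$ (Section \ref{SmallBaseChange}), using Lemma \ref{FrobEquiv} so that the required change of Frobenius lift on $\Tatem$ is harmless. Restriction of a crystalline representation along $G_{\Tatem}\to G_T$ is crystalline with the same Hodge--Tate weights, so by Theorem \ref{quasiKisinTheorem}, Theorem \ref{Kisinbasechange} and Remark \ref{KisinIsomRem} the module $\GM\otimes_{\GS_T}\GS_{\Tatem}$ is a Kisin module associated to $\Lambda|_{G_{\Tatem}}$; hence $\SM_{\mathfrak m}\coloneqq\SM\otimes_{S_T}S_{\Tatem}$ is the associated Breuil module and $M_{\mathfrak m}\coloneqq M\otimes_{b_{\mathfrak m},T}\Tatem\cong\SM_{\mathfrak m}/I_0\SM_{\mathfrak m}$ is the Fontaine--Laffaille module associated to $\Lambda|_{G_{\Tatem}}$ by Theorem \ref{MisFL}. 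In particular $M_{\mathfrak m}\in\mathrm{MF}^{\mathrm{ff}}(\Tatem)$, and I would check that its filtration and Frobenii, as produced by Theorem \ref{MisFL}, coincide with the base change along $b_{\mathfrak m}$ of the data put on $M$ above.

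The main obstacle is precisely this last comparison, that $\Fil^i(M)\otimes_{b_{\mathfrak m},T}\Tatem\cong\Fil^i(M_{\mathfrak m})$. As over $R$, the difficulty is that $S_T\otimes_T\Tatem$ is only $p$-adically dense in $S_{\Tatem}$, so $\Fil^i\SM$ does not visibly commute with the base change; I would get around this by truncating modulo $\Fil^pS_T$, over which $S_T/\Fil^pS_T$ is finite free over $T$, to obtain a decomposition of $\Fil^i\SM_{\mathfrak m}$ as in Lemma \ref{genericdecomp}, and then intersect with $M_{\mathfrak m}$ and invoke the flatness of $b_{\mathfrak m}$ together with Lemma \ref{flatlemma} to rule out the $\Fil^iS_{\Tatem}\otimes_S\SM$-part, exactly as in Lemmas \ref{genericcontainment}--\ref{genericfil}; if instead $\Fil^iM$ is defined via $\Dcrisdual$, the comparison reduces to the known compatibility of $\Dcrisdual$ with the base change $b_{\mathfrak m}$. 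Granting this, Theorem \ref{TateLMPTheorem} gives $M\in\mathrm{MF}^{\mathrm{ff}}(T)$, and the connection $\nabla_M$ inherited from $\SM$ (equivalently from $\GM$ via Theorem \ref{quasiKisinTheorem}) is integrable, topologically quasi-nilpotent, Griffiths transversal and has the $\varphi_{M,i}$ parallel, with integrality checked after the base changes $T\to\Tatem$ as in Proposition \ref{BreuilNablaStable}, so that in fact $M\in\mathrm{MF}^{\mathrm{ff}}_{\nabla}(T)$. Finally $M\otimes_TS_T\cong\SM$ as in Definition \ref{classicalfunctor} follows from $1\otimes s\colon S_T\otimes_TM\to\SM$ being an isomorphism: both are finite projective $S_T$-modules of the same rank, and $1\otimes s$ is surjective by Nakayama applied at each maximal ideal of $S_T$, where it reduces to the identity, as in Lemma \ref{NakSec}.
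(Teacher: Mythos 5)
Your proposal is correct in outline and reaches the theorem by the same global strategy as the paper (put big-category data on $M$, compare it with the data on $M\otimes_{b_{\mathfrak m},T}\Tatem$ coming from Theorem \ref{MisFL}, then invoke Theorem \ref{TateLMPTheorem}), but at each of the delicate steps you hedge between two routes, and the paper commits exclusively to the one you treat as the fallback. The paper never defines $\Fil^iM$ as $\Fil^i\SM\cap M$ over $T$ and never reruns Lemmas \ref{genericdecomp}--\ref{genericfil}: it sets $\Fil^iM\coloneqq\Fil^iD\cap M$ with $D=\Dcrisdual(V)$, and the base-change comparison $\Fil^iM\otimes_T\Tatem\cong\Fil^i\hatM$ is then a three-line computation from the base-change compatibility of $\Dcrisdual$ and Lemma \ref{flatlemma} --- precisely the ``if instead'' branch you mention in passing. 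Likewise the $\varphi_i$ and $\nabla$ structures are not taken from $\SM$ and checked for integrality afterwards; they are defined by restricting $\widehat{\varphi}_i$ and $\nabla_{\widehat D}$ from the already-constructed Fontaine--Laffaille module $\hatM$ over $\Tatem$, with integrality following from $D\cap\hatM=M$ (again Lemma \ref{flatlemma}). This is the point of the paper's remark that the small-base-ring proof is \emph{easier} than the power-series case. Your primary branch (intersecting with $\Fil^i\SM$ and redoing the Shilov-point decomposition over $b_{\mathfrak m}$) is not wrong in principle, but it would need genuine extra work that you gloss over: $\SM$ is only projective, not free, over $S_T$, so the coordinate argument of Lemma \ref{genericcontainment} does not transfer verbatim; and your opening claim that the Section 3 machinery ``does not have to be repeated'' contradicts the paragraph where you repeat it. The rational-theory route avoids all of this, which is why the paper takes it; your proposal would be airtight if you simply promoted that alternative to the main argument. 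Your treatment of $M\otimes_TS_T\cong\SM$ (Nakayama checked at each maximal ideal of $S_T$, since $S_T$ is no longer local) is a correct adaptation of Lemma \ref{NakSec} and is in fact more explicit than what the paper writes down for this clause.
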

\begin{theorem}
Let $M$ be the Fontaine-Laffaille module associated to $\Lambda.$ Then $$T_{\mathrm{cris}}(M) \cong \Lambda.$$ 
\end{theorem}

\end{subsection} 
\begin{subsection}{Preliminaries}
Similar to the previous section, our strategy will be to show the Fontaine-Laffaille data is compatible with the base change to $\Tatem \cong R$ discussed in Section \ref{SmallBaseChange} and then to apply Theorem \ref{TateLMPTheorem}. This time we will be able to rely on the rational theory combined with the theory we have established for a power series ring. \newline

Recall that associated to $V \coloneqq \Lambda \otimes_{\mathbb{Z}_p} \otimes \mathbb{Q}_p$ is $\Dcrisdual(V)$ (which we will denote with $D$ in this section), a finite, projective $T[p^{-1}]$ module equipped with a filtration structure $\Fil^iD$, a Frobenius structure $\varphi_D$, and a $\nabla_D$ structure. We will transfer the data on $D$ to $M$ in a similar way as how in the last section we transferred the data on $\SM$ to $M$. \newline

By Theorem \ref{Kisinbasechange}, we can see that $M \otimes_T \Tatem$ and $\Tcris(\Lambda|_{\Tatem})$, the Fontaine-Laffaille over $\Tatem$ associated to $\Lambda|_{\Tatem}$ are isomorphic as modules (note that $\Tatem$ is a local ring, and a projective module over a local ring is free). Write $\widehat{M}$ for $\Tcris(\Lambda|_{\Tatem})$ through the remainder of this section. 
\begin{lemma}
Consider the projection $q: \SM \to M$ induced by the $\varphi$-compatible projection $S \to T$, $u \mapsto 0$. Then $q$ admits a unique $\varphi$-compatible section $s: M \to \SM$. 

\begin{proof}
    The proof of Lemma \ref{sectionstable} works identically when the base ring is $T$. 
\end{proof}
\end{lemma}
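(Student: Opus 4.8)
The plan is to run the argument of Lemma~\ref{sectionstable} verbatim, replacing the power series ring $R$ and its Shilov point $R_g$ by the small base ring $T$ and its Shilov point $T_g := W(k_g)$, where $k_g$ is the perfection of $\mathrm{Frac}(T/pT)$ as in Section~\ref{ShilovPointBaseChange}. First I would record the rational input: the projection $q \otimes_T T[p^{-1}]\colon \SM[p^{-1}] \to M[p^{-1}]$ admits a unique $\varphi$-compatible section $s$, and $1 \otimes s\colon S_T[p^{-1}] \otimes_{T[p^{-1}]} M[p^{-1}] \to \SM[p^{-1}]$ is an isomorphism. This is \cite[Lemma 4.2]{Fcrystals}, whose proof is written for a general base ring and hence applies with base ring $T$; it is the statement recorded over $R$ as Lemma~\ref{section}. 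In particular any $\varphi$-compatible $T$-linear section $M \to \SM$ must agree with $s$ after inverting $p$, so it is unique, and the whole problem reduces to showing $s(M) \subset \SM$ as submodules of $\SM[p^{-1}]$.

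Next I would base change along the $\varphi$-compatible flat embedding $T \hookrightarrow T_g$, writing $M_g := M \otimes_T T_g$, $\GS_{T,g} := T_g\llbracket u\rrbracket$, $S_{T,g}$ for its $p$-adically completed divided power envelope with respect to $E(u)$, and $\SM_g := \SM \otimes_S S_{T,g}$. Exactly as in Lemma~\ref{sectionstable} one forms the commutative square relating $M \hookrightarrow M[p^{-1}] \xrightarrow{s} \SM[p^{-1}]$ with $M_g \hookrightarrow M_g[p^{-1}] \xrightarrow{s \otimes_T T_g} \SM_g[p^{-1}]$. Since $T_g = W(k_g)$ is a complete discrete valuation ring with perfect residue field, $s \otimes_T T_g$ is a $\varphi$-compatible section in the classical setting; by \cite[Prop.~3.2.3]{Gao} it is the unique such section, and it carries $M_g$ — and therefore the submodule $M$ — into $\SM_g$. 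Tracing $M$ through the square shows $s(M) \subset \SM[p^{-1}] \cap \SM_g$.

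Finally I would compute this intersection. As $\SM$ is finite projective, hence flat, over $S_T$, Lemma~\ref{flatlemma} gives $\SM[p^{-1}] \cap \SM_g = \SM \otimes_{S_T} (S_T[p^{-1}] \cap S_{T,g})$, and it remains to check $S_T[p^{-1}] \cap S_{T,g} = S_T$ term by term, just as in Lemma~\ref{sectionstable}: writing an element of the intersection both as $\sum_i \frac{E(u)^i}{i!}\sum_j a_{ij}u^j$ with $a_{ij} \in W(k_g)$ and as $\sum_i \frac{E(u)^i}{i!}\sum_j b_{ij}u^j$ with $b_{ij} \in T[p^{-1}]$, comparing coefficients forces $a_{ij} = b_{ij} \in T[p^{-1}] \cap W(k_g) = T$, the last equality because $T$ is $p$-torsion-free and $T/p^nT \hookrightarrow W_n(k_g)$ for every $n$ (i.e.\ $T$ is saturated in $T_g$, which follows from $T/pT \hookrightarrow \mathrm{Frac}(T/pT) \hookrightarrow k_g$). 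Hence $s(M) \subset \SM$, which together with the uniqueness from the first step proves the lemma.

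I expect no substantial obstacle here: the argument is pure bookkeeping, and the only point requiring care is that the two ingredients imported from the classical theory — the rational section of \cite[Lemma 4.2]{Fcrystals} and its classical uniqueness \cite[Prop.~3.2.3]{Gao} — really are available in this generality. This comes down to the observation that the Shilov point $W(k_g)$ of a small base ring still has perfect residue field (so Gao's classical result applies), together with the elementary saturation fact $T = T[p^{-1}] \cap W(k_g)$ used in the term-by-term comparison.
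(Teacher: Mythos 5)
Your proposal is correct and is exactly the paper's argument: the paper's proof simply says that the proof of Lemma \ref{sectionstable} works identically over $T$, which is the verbatim transcription you carry out (rational section from \cite[Lemma 4.2]{Fcrystals}, uniqueness and integrality at the Shilov point $W(k_g)$ via \cite[Prop.~3.2.3]{Gao}, and the intersection computation $S_T[p^{-1}] \cap S_{T,g} = S_T$ via Lemma \ref{flatlemma}). Your explicit verification of the saturation $T = T[p^{-1}] \cap W(k_g)$ is a useful detail the paper leaves implicit.
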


This section gives us an injective map $M \hookrightarrow \SM \hookrightarrow \SD \coloneqq \SM[p^{-1}]$ and the projection $u \mapsto 0$ takes $\SD$ to $D$, and thus we have an injective map $M \hookrightarrow D.$ 
\end{subsection}
\begin{subsection}{The Filtration Structure}
We first define the filtration structure on $M$ via the filtration on $D$ from the rational theory.
\begin{definition}
Set $$\Fil^iM \coloneqq \Fil^iD \cap M.$$
\end{definition}
It remains to show this filtration behaves well under base change. 
\begin{lemma}
    $\Fil^iM \otimes_T \Tatem \cong \Fil^i\hatM$
\begin{proof}
    Note that \cite[Lemma 2.13]{Fcrystals} describes the compatibility of $D$ under base change. Since $\hatM$ is a Fontaine-Laffaille module we know  $\Dcrisdual(\Lambda|_{\Tatem}) \cong \hatM[p^{-1}]$ and $\Fil^i\hatM = \Fil^i\Dcrisdual(\Lambda|_{\Tatem}) \cap M.$ Using Lemma \ref{flatlemma} we have 
    \begin{align*}
        \Fil(M) \otimes_T \Tatem &= (\Fil^i D \cap M) \otimes_T \Tatem \\
        &= (\Fil^iD \otimes_T \Tatem) \cap (M \otimes_T \Tatem) \\
        &= \Fil^i(\Dcrisdual(\Lambda|_{\Tatem})) \cap \hatM \\
        &= \Fil^i \hatM.
    \end{align*}
\end{proof}
\end{lemma}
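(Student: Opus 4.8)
The plan is to deduce the statement from the flat base-change behaviour of the crystalline period module $D=\Dcrisdual(V)$, where $V=\Lambda\otimes_{\mathbb{Z}_p}\mathbb{Q}_p$, together with the realisations of both $M$ and $\hatM$ as lattices sitting inside (the base change of) $D$. The first step is to check that $b_{\mathfrak{m}}\colon T\to\Tatem$ is flat: $T_{\mathfrak{m}}$ is flat over $T$ as a localisation, and $\Tatem$ is the $p$-adic completion of the Noetherian ring $T_{\mathfrak{m}}$, hence flat over $T_{\mathfrak{m}}$; so $\Tatem$ is flat over $T$ and Lemma~\ref{flatlemma} applies to finite intersections of $T$-submodules after $-\otimes_T\Tatem$.

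The second step collects the two identifications needed. On the one hand, by \cite[Lemma~2.13]{Fcrystals} the formation of $\Dcrisdual$ is compatible with the base change $T\to\Tatem$ in a filtered way, so $D\otimes_T\Tatem\cong\Dcrisdual(\Lambda|_{\Tatem})$ with $\Fil^iD\otimes_T\Tatem\cong\Fil^i\Dcrisdual(\Lambda|_{\Tatem})$ (using that $D$ is already a $\mathbb{Q}_p$-vector space, so $-\otimes_T\Tatem$ coincides with $-\otimes_{T[p^{-1}]}\Tatem[p^{-1}]$). On the other hand, $\hatM=\Tcris(\Lambda|_{\Tatem})$ is a Fontaine--Laffaille module over $\Tatem\cong R$, so the already-established theory over $R$ --- via \cite[Theorem~4.28]{Fcrystals} --- gives a filtered isomorphism $\hatM[p^{-1}]\cong\Dcrisdual(\Lambda|_{\Tatem})$, i.e.\ $\Fil^i\hatM=\Fil^i\Dcrisdual(\Lambda|_{\Tatem})\cap\hatM$. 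Finally, Theorem~\ref{Kisinbasechange} identifies $M\otimes_T\Tatem\cong\hatM$ as modules, compatibly with the embeddings into $\Dcrisdual(\Lambda|_{\Tatem})$.

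Once these are in place the computation is immediate:
\begin{align*}
\Fil^iM\otimes_T\Tatem
&=(\Fil^iD\cap M)\otimes_T\Tatem\\
&=(\Fil^iD\otimes_T\Tatem)\cap(M\otimes_T\Tatem)\\
&=\Fil^i\Dcrisdual(\Lambda|_{\Tatem})\cap\hatM\\
&=\Fil^i\hatM,
\end{align*}
the second line being Lemma~\ref{flatlemma} and the third the identifications above. I expect the only real obstacle to be the bookkeeping of these compatibilities: one must ensure that $\Fil^iD$, $M$, $\Fil^i\Dcrisdual(\Lambda|_{\Tatem})$ and $\hatM$ are all regarded inside one common ambient $\Tatem[p^{-1}]$-module, and that the isomorphism of \cite[Lemma~2.13]{Fcrystals}, the $R$-case of the main theorem, and Theorem~\ref{Kisinbasechange} are mutually compatible --- in particular that the square relating $M\hookrightarrow D$ to $\hatM\hookrightarrow\Dcrisdual(\Lambda|_{\Tatem})$ commutes. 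After that, the intersection manipulation via flatness is routine.
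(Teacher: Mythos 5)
Your proposal is correct and follows essentially the same route as the paper: both rest on the filtered base-change compatibility of $\Dcrisdual$ from \cite[Lemma 2.13]{Fcrystals}, the identification $\Fil^i\hatM = \Fil^i\Dcrisdual(\Lambda|_{\Tatem}) \cap \hatM$ coming from the already-established theory over $\Tatem \cong R$, and the intersection-commutes-with-flat-base-change argument of Lemma \ref{flatlemma}. Your explicit verification of the flatness of $T \to \Tatem$ and of the compatibility of the various embeddings is care the paper leaves implicit, but the argument is the same.
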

\end{subsection}
\begin{subsection}{The $\varphi_i$ structure}
We now define the $\varphi_i$ structure on $M$. Fix $\mathfrak{m}$ a maximal ideal of $T$ and again write $\hatM$ For $M \otimes \Tatem$ As $\hatM$ is a Fontaine-Laffaille module, there is a $\varphi_i$ structure on $\hatM$ which we will denote $\widehat{\varphi_i}$. Via our $\varphi$-compatible embedding $M \hookrightarrow \widehat{M}$, we can define a $\varphi_i$ structure on $M$ as $$\varphi_i \coloneqq \widehat{\varphi}_i\bigg|_{\Fil^iM}.$$ Let $\widehat{D} \coloneqq \hatM[p^{-1}] = \Dcrisdual(\Lambda|_{\Tatem})$ Note that $\widehat{\varphi_i} = \frac{\varphi_{\widehat{D}}}{p^i}$, and the compatibility of $\Dcrisdual$ with base change in \cite[Lemma 2.13]{Fcrystals} gives us that $\frac{\varphi_D}{p^i}$ acts the same on elements of $\Fil^iM \subset D$ and sends them into $D$. Thus $\varphi_i$ sends $\Fil^iM$ into $$D \cap \widehat{M} = M \otimes_T T[p^{-1}] \cap M \otimes_T \Tatem = M$$ by Lemma \ref{flatlemma}. Since the $\widehat{\varphi}_i$ satisfies $\widehat{\varphi_{i-1}} = p\widehat{\varphi}_i$, the same holds true for our new $\varphi_i$ structure. 
\end{subsection}
\begin{subsection}{The $\nabla$ Structure}
As $D$ comes equipped with a $\nabla_D$ structure, set $$\nabla_M \coloneqq\nabla_D\big|_M.$$ The proof that $M$ is stable under $\nabla_M$ is similar to the proof that $M$ is stable under the $\varphi_i$ structure. \newline

We can embed $M$ into $\widehat{M}$ which is stable under its $\nabla_{\hatM}$ structure, and we can also embed $M$ into $D$ which is stable under its $\nabla_D$ structure. As $D \cap \hatM = M$ as above, we see that $M$ is stable under $\nabla_M$.
\end{subsection}
\begin{subsection}{$\Tcris$ is essentially surjective over $T$.}
Combining the last few subsections, we have equipped $M$ with the data of an object in $\mathrm{MF}_{\mathrm{big}, \nabla}(T)$ which is compatible with each base change to $\Tatem$, completing the proof of Theorem \ref{MisFLoverT} by utilizing Theorem \ref{TateLMPTheorem}. It only remains to show that 
\begin{theorem}
    $\Tcris(M) \cong \Lambda.$
    \begin{proof}
We define a map $\Tcris(M) \to \Tcris(\hatM)$ via the functors to Galois representations from Kisin modules. As $\Tcris(M) = T_S(\SM)$, we obtain an injective map  $$T_{\GS}^r(\GM)  = (\Fil^r\GM \otimes \Ainf(T))^{\varphi_r=1} \hookrightarrow (\Fil^r \SM \otimes \Acris(T))^{\varphi_r = 1} = \Tcris(M)$$ and analogously for $\T_{\GS}^r(\GM \otimes_{\GS_T} \GS_R) \to \Tcris(\hatM)$. We obtain the diagram     

 \adjustbox{scale=1,center}{%
  \begin{tikzcd}[scale=1]    
      \Tcris(M) \arrow{r}  & \Tcris(\hatM) \\
      T_\GS^r(\GM) \arrow{r}{\cong} \arrow{u}& \T_{\GS}^r(\GM \otimes_{\GS_T} \GS_{\Tatem})\arrow{u}{\cong}
  \end{tikzcd}
} 
as we also have a map $\Tcris(M) \to \Tcris(\hatM)$ since $\Tcris(M) \subset \Tcris(D)$ and $\Tcris(\hatM) \subset \Tcris(\widehat{D})$ and $\Tcris(D) = \Tcris(\widehat{D}).$ \newline

The right vertical arrow in the diagram above is an isomorphism by the proof of Lemma \ref{GaloisCongLemma} and the bottom horizontal arrow is an isomorphism by Remark \ref{KisinIsomRem} The left vertical arrow is an injection. Thus the injective $\Tcris(M) \to \Tcris(\hatM)$ is forced to be an isomorphism, which quickly completes the proof.
    \end{proof}
\end{theorem}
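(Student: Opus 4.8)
The plan is to show that the natural $G_T$-equivariant injection $T_{\GS_T}^r(\GM)\hookrightarrow T_S(\SM)\cong T_{\cris}(M)$ --- built, exactly as over $R$ in Section~\ref{BreuilGaloisActionTwo}, from the descent datum $f_S=\mathrm{id}_S\otimes_{\varphi,\GS}f$ together with the unique $\varphi$-compatible section $s\colon M\to\SM$ --- is bijective. Since $\GM$ is the Kisin module with descent datum attached to $\Lambda$, the construction recalled in Section~\ref{KisinGaloisAction}, applied over $T$ with the dualizing and twisting convention of Remark~\ref{dualremark}, already gives $T_{\GS_T}^r(\GM)\cong\Lambda$ as $G_T$-representations; hence it suffices to prove that this injection is also surjective, after which it is automatically an isomorphism of $G_T$-representations and we are done.

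To check surjectivity I would base change to $\Tatem\cong R$ for each maximal ideal $\mathfrak{m}\subset T$. Put $\hatM=M\otimes_T\Tatem$ and $\GM_{\mathfrak{m}}=\GM\otimes_T\Tatem$, the (finite free) Kisin module over $\GS_{\Tatem}\cong\GS_R$ attached to $\Lambda\vert_{\Tatem}$; by the proof of Theorem~\ref{MisFLoverT}, $\hatM$ is the Fontaine-Laffaille module over $\Tatem$ attached to $\Lambda\vert_{\Tatem}$, so the essential surjectivity already established over $R$ applies to it. Concretely: Lemma~\ref{GaloisCongLemma} over $\Tatem\cong R$ gives that the natural injection $T_{\GS_{\Tatem}}^r(\GM_{\mathfrak{m}})\hookrightarrow T_S(\SM_{\mathfrak{m}})\cong T_{\cris}(\hatM)$ is an isomorphism, with $T_{\cris}(\hatM)\cong\Lambda\vert_{\Tatem}$; Remark~\ref{KisinIsomRem} gives an isomorphism $T_{\GS_T}^r(\GM)\xrightarrow{\ \sim\ }T_{\GS_{\Tatem}}^r(\GM_{\mathfrak{m}})$ induced by $\Ainf(T)\to\Ainf(\Tatem)$; and, writing $V=\Lambda\otimes_{\mathbb{Z}_p}\mathbb{Q}_p$, $D=M[p^{-1}]\cong\Dcrisdual(V)$ and $\widehat D=\hatM[p^{-1}]\cong\Dcrisdual(V\vert_{\Tatem})$, the base-change compatibility of $\Dcrisdual$ (\cite[Lemma~2.13]{Fcrystals}) gives $T_{\cris}(D)=T_{\cris}(\widehat D)$, so $T_{\cris}(M)$ and $T_{\cris}(\hatM)$ are both lattices in this common rational representation and the inclusion $M\hookrightarrow\hatM$ induces an injection $T_{\cris}(M)\hookrightarrow T_{\cris}(\hatM)$.

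These maps assemble into a commutative square
\adjustbox{scale=1,center}{
\begin{tikzcd}[scale=1]
T_{\GS_T}^r(\GM) \arrow{r} \arrow{d}{\cong} & T_{\cris}(M) \arrow{d} \\
T_{\GS_{\Tatem}}^r(\GM_{\mathfrak{m}}) \arrow{r}{\cong} & T_{\cris}(\hatM)
\end{tikzcd}}
in which the left vertical and bottom horizontal arrows are isomorphisms and the two remaining arrows are injective. Hence the left-then-bottom composite $T_{\GS_T}^r(\GM)\to T_{\cris}(\hatM)$ is an isomorphism; since it coincides with the top-then-right composite and the right vertical arrow is injective, the top arrow $T_{\GS_T}^r(\GM)\to T_{\cris}(M)$ must be surjective, hence bijective. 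Combined with $T_{\GS_T}^r(\GM)\cong\Lambda$ this gives $T_{\cris}(M)\cong\Lambda$ as $G_T$-representations, as claimed.

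The main obstacle, just as in Section~3, is the bookkeeping of the various Galois actions: one must know that $T_{\GS_T}^r(\GM)\hookrightarrow T_{\cris}(M)$ is genuinely $G_T$-equivariant, i.e. that the action on $T_S(\SM)$ built from the descent datum $f_S$ agrees with the one built from the connection $\nabla$ on $\SM$ and that the section $s$ intertwines it with the $\nabla$-action on $M$. These are the small-base-ring analogues of facts proved over $R$ in Section~\ref{BreuilGaloisActionTwo} and Theorem~\ref{TGTScompat}, and the arguments there carry over without change. A secondary technical point is the well-definedness and injectivity of $T_{\cris}(M)\to T_{\cris}(\hatM)$, which rests on $T_{\cris}(D)=T_{\cris}(\widehat D)$ and hence on the base-change compatibility of $\Dcrisdual$; and, when invoking essential surjectivity over $R$, one should record that under $\Tatem\cong R$ one has $\SM_{\mathfrak{m}}\cong\GM_{\mathfrak{m}}\otimes_{\varphi,\GS_{\Tatem}}S_{\Tatem}$ and $\SM_{\mathfrak{m}}/I_0\SM_{\mathfrak{m}}\cong\hatM$, so that the Fontaine-Laffaille module attached to $\Lambda\vert_{\Tatem}$ by the theory over $R$ really is $\hatM$.
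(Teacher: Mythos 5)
Your proposal is correct and follows essentially the same route as the paper: the same commutative square built from the injection $T_{\GS_T}^r(\GM)\hookrightarrow T_S(\SM)=\Tcris(M)$, the base-change isomorphism of Remark \ref{KisinIsomRem}, the isomorphism over $\Tatem\cong R$ from Lemma \ref{GaloisCongLemma}, and the injection $\Tcris(M)\hookrightarrow\Tcris(\hatM)$ coming from $\Tcris(D)=\Tcris(\widehat D)$, with the diagram chase merely run in a transposed orientation. Your added remarks on $G_T$-equivariance and on identifying $\hatM$ with the Fontaine--Laffaille module of $\Lambda\vert_{\Tatem}$ are consistent with what the paper leaves implicit.
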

\end{subsection}
\end{section}
\printbibliography
\end{document}